\newtheorem{theorem}{Theorem}[section]
\newtheorem{lemma}[theorem]{Lemma}
\newtheorem{corollary}[theorem]{Corollary}
\newtheorem{proposition}[theorem]{Proposition}
\newtheorem{example}[theorem]{Example}
\newtheorem{remark}[theorem]{Remark}
\def\RR{{\mathbb R}}  
\def\PP{{\mathbb P}}
\def\NN{{\mathbb N}} 
\def\EE{{\mathbb E}}        
\def\FF{{\mathcal F}}      
\def\SS{{\mathcal S}}      
\def\AA{{\mathcal A}}      
\def\Var{\theta}     
\def\normeq{\mu}      
\def\sumprob{\delta}
\newcommand{\norm}[1]{\left\Vert #1 \right\Vert} 
\newcommand{\escon}[2]{\mathbb E (#1 \, \vert \,  #2)}
\DeclareMathOperator{\fix}{Fix}
\def\Var{\theta}     
\def\normeq{\mu}      
\def\sumprob{\nu}
\def\optv{{\overline{r}}}
\begin{document}

\title[Stochastic fixed-point iterations for nonexpansive maps]{Stochastic  fixed-point iterations for nonexpansive maps: Convergence and error bounds}

\author[M. Bravo]{Mario Bravo}
\address[M.B.]{Universidad Adolfo Ib\'a\~nez, Facultad de Ingenier\'ia y Ciencias,
Diagonal Las Torres 2640, Santiago, Chile} 
\email{ \href{mailto:mario.bravo@uai.cl}{\nolinkurl{mario.bravo@uai.cl}}}
\thanks{{\em Acknowledgements.} The work of Mario Bravo was partially funded by FONDECYT grant No. 1191924 and Anillo Grant ANID/ACT210005. Roberto Cominetti acknowledges the support of FONDECYT No.1171501. } 

\author[R. Cominetti]{Roberto Cominetti}
\address[R.C.]{Universidad Adolfo Ib\'a\~nez, Facultad de Ingenier\'ia y Ciencias,
Diagonal Las Torres 2640, Santiago, Chile} 
\email{ \href{mailto:roberto.cominetti@uai.cl}{\nolinkurl{roberto.cominetti@uai.cl}}}

\begin{abstract}
We study a stochastically perturbed version of the well-known Krasnoselski--Mann iteration for  computing fixed points of nonexpansive 
maps in finite dimensional normed spaces.
We discuss sufficient conditions on the stochastic noise and stepsizes that guarantee almost sure convergence of the iterates towards 
a fixed point, and derive non-asymptotic error bounds and convergence rates for the fixed-point residuals. 
Our main results concern the case of a martingale difference noise with variances that can possibly grow unbounded.
This supports an application to reinforcement learning for average reward Markov decision processes, for which we establish
convergence and asymptotic rates. We also 
analyze in depth the case where the noise has uniformly bounded variance, obtaining error bounds 
with explicit computable constants. 
 \end{abstract}
 
 \keywords{Nonexpansive maps, fixed points, stochastic iterations, error bounds, convergence rates, $Q$-learning,
stochastic gradient descent
}
 \subjclass[2010]{47H09, 47J26, 62L20, 65Kxx, 68Q25}

\maketitle

\section{Introduction}
Consider a map $T:\RR^d\to \RR^d$ which is nonexpansive in some given  norm $\|\cdot\|$ in $\RR^d$  (not necessarily Euclidean),
and assume that its set of fixed points $\fix(T)$ is nonempty. 
Starting from an arbitrary initial point $x_0\in \RR^d$, we investigate the convergence  of the Stochastic Krasnoselskii-Mann iteration
\begin{equation}\label{Eq:skm} 
(\forall\,n\geq 1)\quad  x_{n}= (1\!-\!\alpha_{n})\,x_{n-1} + \alpha_{n}\,(Tx_{n-1} +U_{n}) \tag{\textsc {skm}}
\end{equation}
 where $(\alpha_n)_{n\geq 1}$ is a sequence of averaging  scalars, and $(U_n)_{n\geq 1}$ is a
random process in $\RR^d$ adapted to some filtration $(\mathcal F_n)_{n\geq 1}$  on a probability space 
$(\Omega, \mathcal F, \PP)$.  

The iteration \eqref{Eq:skm} is a classical Robbins--Monro process  in which  
the evaluation of $Tx_{n-1}$ is subject to an additive random noise $U_n$. It appears naturally when 
considering gradient-type and decomposition methods in stochastic convex optimization, and 
in machine learning applications where the objective function and its gradient are estimated
by sampling from data. 

The stochastic approximation (SA) approach to study these type of iterations has a long history  
started with the landmark papers by  Robbins--Monro \cite{rbm} and Kiefer--Wolfowitz \cite{kw}
in the context of statistical estimation and regression.
For an historical perspective of the use of SA in stochastic optimization we refer to the influential paper by Nemirovskii {\em et al.} 
\cite{nem}. Closely related to stochastic optimization algorithms are operator splitting methods for maximal monotone 
inclusions, which can also be framed in forms similar to \eqref{Eq:skm} ({\em cf.} \cite{cp1,cp3}).
While these works are mostly concerned with nonexpansive maps in Euclidean settings, 
\eqref{Eq:skm} is also a template that fits other discrete-time stochastic dynamics 
such as $Q$-learning iterations for average reward 
Markov decision processes \cite{abb1,jjs,tsi},  in which the relevant operator is nonexpansive in the infinity norm $\|\cdot\|_\infty$.

\subsection{Our contribution}
In this paper we
focus on the general framework described by \eqref{Eq:skm}. 
We begin by establishing general conditions under which the sequences $(x_n)_{n\in\NN}$ generated by \eqref{Eq:skm}
are guaranteed to converge almost surely  towards some random fixed point $x^*\in\fix(T)$. 
Here and after, all equations and statements involving random elements must be understood
to hold  
$\PP$-almost surely.  We then proceed to derive 
explicit error bounds for the residuals $\|x_n-Tx_n\|$, both in expectation and with high probability, 
with a special focus in the case where $\alpha_n=1/(n+1)^a$ with $\frac{1}{2}\leq a\leq 1$. 
These seem to be the first rates for the general
 \eqref{Eq:skm} iteration.

Our main results concern the case  where $U_n$ is a martingale difference sequence in $L^2$, and
under standard assumptions on the stepsizes $\alpha_n$, namely
\begin{flalign}\label{H0}\tag{$\textsc{h}_0$}
(\forall n\geq 1)&\hspace{4ex}\mbox{$  \alpha_n\!\in\!(0,1)$\hspace{1ex} and\hspace{1ex}  $\sum_{k=1}^\infty\alpha_k(1\!-\!\alpha_k) =\infty$,}\\
\label{H1} \tag{\textsc{h}\textsubscript{1}} 
 (\forall n\geq 1)&\hspace{4ex}\mbox{$\escon{U_{n}}{\FF_{n-1}}=0$\hspace{1ex} and\hspace{1ex} $\Var_n^2\triangleq \EE  ( \norm{U_n}^2_2 ) <\infty$,} 
 \end{flalign}
where $\norm{\,\cdot\,}_2$ denotes the standard $L_2$-norm in $\RR^d$. 
Although we  will study in detail the special case of uniformly bounded variances,
our general results allow $\Var_n^2$  to diverge in a controlled manner. This will prove very useful for
the application to the RVI-$Q$-learning iteration presented in Section \S\ref{sec:RVI-Q}.  We also consider the case where $U_n$  includes additional stochastic terms, not necessarily
of martingale type nor adapted, but which vanish asymptotically as $n$ grows. The following is a brief overview of our results. 

\vspace{2ex}

\noindent{\bf\em \ \hspace*{0.3cm}Convergence and error bounds:} 
\vspace{1ex}

\begin{itemize}\setlength\itemsep{0.5em}
    \item Under mild conditions on $\alpha_n$  and $\Var_n$, \cref{Thm:First} shows that the sequence $(x_n)_{n\in\NN}$ generated 
    by \eqref{Eq:skm} converges almost surely to some point in $\fix(T)$. In particular, this applies when the $\Var_n$'s are bounded and $\alpha_n= {1}/{(n+1)^a}$ with $\frac{2}{3}<a\leq 1$.  
    \item  \cref{Thm:Vanishing_Noise} complements this result by showing that $(x_n)_{n\in\NN}$ also converges to some point in $\fix(T)$ when the noise $U_n$ converges to zero sufficiently fast.    
    \item \cref{Thm:Rate_Expectation,Thm:convolution_bound} then establish error bounds for the expected fixed-point residuals $\EE(\norm{x_n-Tx_n})$, and \cref{Cor:error_bound_high_prob} derives error bounds in  high probability.
\end{itemize}

\vspace{1.5ex}
\noindent{\bf\em \ \hspace*{0.3cm}Special cases:}
\vspace{1ex}

\begin{itemize}\setlength\itemsep{0.5em}
    
    \item Section \S\ref{sec:RVI-Q} presents an application to average reward Markov decision processes, establishing
    convergence and error bounds for the {\em RVI-$Q$-learning algorithm}.

    \item Section \S\ref{sec:BndVar} studies in depth the case of uniformly bounded variances $\Var_n^2\leq\Var^2$.
    For constant stepizes $\alpha_n\equiv\alpha$ over a fixed horizon $n_0$, the best bound we get is $\EE(\|x_{n_0}-Tx_{n_0}\|)\leq C/\sqrt[6]{n_0}$ with an explicit  constant $C$.
     Similarly,  for  stepsizes $\alpha_n= 1/(n+1)^a$ with $\frac{1}{2} \leq a\leq 1$ we derive explicit error bounds whose best rate
     is attained when $a=\frac{2}{3}$ with  $\EE(\|x_n-Tx_n\|)\leq C\ln(n)/\sqrt[6]{n}$. We illustrate these results by an application to stochastic gradient descent.
    \item Section \S\ref{Sec:hilbert} presents an alternative direct analysis of error bounds in the case of Euclidean spaces, and compares them with our results for arbitrary norms.
    \end{itemize}

 \subsection{Some notation}
We  distinguish the original norm $\|\cdot\|$ for which the map $T$ is nonexpansive, from the standard Euclidean norm in $\RR^d$ denoted by $\|\cdot\|_2$.
Since all norms in $\RR^d$ are equivalent, throughout we denote $\mu>0$  a constant such that
\begin{equation} \label{H2p}
(\forall x\!\in\!\RR^d)\qquad \|x\|\leq\mu\|x\|_2. 
\end{equation}

For any two positive sequences we write $a_n\sim b_n$ when their quotient $a_n/b_n$ is bounded from above and bounded away from 0.
We also write $a_n\approx b_n$ when $\lim_{n\to\infty} a_n/b_n=1$.

\subsection{Related work}\label{sec:related}

A classical iteration of the form \eqref{Eq:skm} is the stochastic gradient descent for minimizing an expected cost $f(x)=\EE[F(x,\xi)]$, namely
\begin{equation}\label{Eq:sgd}
\tag{\textsc{sgd}}
x_{n}=x_{n-1}-\gamma_n\nabla_x F(x_{n-1},\xi_n)
\end{equation}
 where the $\xi_n$'s are i.i.d. samples of the random variable $\xi$. In a Euclidean setting and assuming that $f(\cdot)$ is convex with an $L$-Lipschitz gradient, the iteration \eqref{Eq:sgd} coincides with \eqref{Eq:skm} for the nonexpansive map $Tx=x-\frac{2}{L}\nabla f(x)$, with $\alpha_n=\frac{\gamma_n L}{2}$
 and $U_n=\frac{2}{L}[\nabla f(x_n)-\nabla_xF(x_n,\xi_n)]$. Under suitable conditions discussed later in \S\ref{subsec:SGD}, 
Bach and Moulines
\cite{bm} gave explicit error bounds for the expected optimality 
gap $\EE\big(f(x_n)-\min f\big)$, achieving a rate $O(1/\sqrt[3]{n})$ with stepsizes $\gamma_n\sim 1/n^{2/3}$,
and faster rates when $f(\cdot)$ is strongly convex. Error bounds with faster rates were also
obtained earlier in \cite[Polyak and Judistky]{py} and  \cite[Nemirovskii {\em et al.}]{nem},
for {\em gradient-projection} methods  in convex stochastic optimization over a compact set.
Our results apply to general nonexpansive maps and do not assume the existence of a potential function, so  
that the optimality gap is no longer meaningful. We consider instead the expected fixed-point residual, 
which for \eqref{Eq:sgd} corresponds to $\EE\big(\|x_n-Tx_n\|\big)=\frac{2}{L}\EE\big(\|\nabla f(x_n)\|\big)$.
As observed in \cite{AZ18}, there are situations in which obtaining a small gradient is
more relevant than attaining a small optimality gap.
In \S\ref{subsec:SGD} we apply our general results to obtain error bounds for the expected gradient norm,
and compare with those derived from \cite{bm} as well as with more recent work.

Beyond the setting of stochastic optimization, Combettes and Pesquet \cite{cp1,cp3} studied
a general class of stochastic iterations for monotone inclusions that include  \cref{Eq:skm} as a special case.
Their analysis is restricted to Hilbert spaces and is based on the notion of quasi-Fej\'er sequences. The noise $U_n$ is required to vanish sufficiently fast as $n\to\infty$, which unfortunately rules out  the 
simplest but relevant case of an i.i.d. noise sequence. They establish the almost sure convergence of the iterates, but no error bounds nor convergence rates are presented.
On the positive side, their results apply to more general
stochastic versions of the Douglas-Rachford and forward-backward splitting methods for structured maximal monotone inclusions,
in both synchronous and asynchronous implementations.
Concurrently with \cite{cp1,cp3}, Rosasco, Villa and Vu  studied in \cite{rvv} a  stochastic forward-backward iteration for 
monotone inclusions involving a sum of a maximal monotone set-valued map and a co-coercive operator. Their results support more general noise sequences
which can be state-dependent with a  variance that may grow quadratically with the state.
Under suitable conditions on the stepsizes and a uniform monotonicity condition, they prove the almost sure convergence of the iterates
towards the unique solution $x^*$ of the problem. Also, under strong monotonicity, they provide non-asymptotic error bounds for the expected squared 
distance $\EE(\|x_n-x^*\|^2)$.
Similar iterations, with state-dependent noise whose variance grows in a controlled manner, were 
studied for  stochastic variational inequalities \cite[Iusem {\em et al.}]{ijot} and stochastic optimization \cite[Jofr\'e and Thomson]{jt}, attaining near-optimal oracle complexity
 by using a variance reduction strategy based on mini-batches whose size grows superlinearly.

A classical approach to analyze iterations of the form \eqref{Eq:skm} is the well-known ODE method for stochastic approximation algorithms (see \cite{Benaim99,ky} for a complete account and references). The main idea is to connect the asymptotic behavior of \eqref{Eq:skm} to the one of the continous dynamics $\dot x= Tx-x$. Under suitable assumptions, the limit set of the process $(x_n)_{n\in\NN}$ is contained almost surely in an {\em Internally Chain Transitive} (ICT) set of this ODE, that is to say, a compact invariant set with no proper attractors. In the simplest case, when the ODE has a unique global attractor it is the only ICT set. 
However, in general ICT's can have a complex structure and provide little information about the limit of the process. Convergence towards a single point can still be obtained  
under further structure of the underlying ODE, such as when the dynamics are cooperative \cite[Bena\"im and Faure]{BF12}. Up to our knowledge, in our setting of a general nonexpansive map $T$, it is not even known whether $\fix(T)$ is an ICT set, nor whether this technique can yield convergence to a single point.
The ODE approach was used by Abounadi et al. \cite{abb2} to study the convergence of \eqref{Eq:skm}, as well as an asynchronous version, although under more restrictive assumptions:  the noise sequence $U_n$ is required to be uniformly bounded, and  $\fix(T)=\{x^*\}$ is assumed to be a singleton and a global attractor for the dynamics $\dot x= Tx-x$
(see \cite[Assumption 3.2]{abb2}).
Moreover, \cite{abb2} focuses mainly on the convergence towards $x^*$ and does not provide error bounds nor convergence rates.

Our paper departs from these previous trends. Instead of using the ODE approach for stochastic approximation, or restricting to Hilbert spaces and using quasi-Fej\'er convergence, we base our analysis on an explicit error bound for inexact Krasnoselskii-Mann iterations established in 
\cite[Bravo, Cominetti, and Pavez]{bcp}. This technique allows to prove the convergence of the \eqref{Eq:skm} iterates in general normed spaces, and provides  error bounds and convergence rates
for the fixed-point residual of the last-iterate.

\subsection{Structure of the paper}
Section \S{}\ref{Section:2} presents our general results on the almost sure convergence of the iteration
 \eqref{Eq:skm} under martingale difference noise and vanishing stochastic perturbations, 
followed by the analysis of error bounds and convergence rates for the expected fixed-point residuals.
 Section \S{}\ref{sec:RVI-Q} applies the general results in the context of $Q$-Learning 
 for average reward Markov decision processes. Section  \S{}\ref{sec:BndVar} studies the case when the 
 noise has uniformly bounded variance, presenting more explicit 
 error bounds for the most common cases of constant stepsizes $\alpha_n\equiv\alpha$ and stepsizes of power form $\alpha_n={1}/{(n+1)^a}$,
which are then  illustrated through an application in stochastic optimization. 
 Finally, Section \S\ref{Sec:hilbert} presents a short discussion of the Euclidean case, putting in perspective  our results for general normed spaces. 
  \cref{appendix00,appendix1} collect the more technical estimates used in the proofs.

\newpage
\section{Almost sure convergence and error bounds}\label{Section:2}
Our main tool for studying the convergence of \eqref{Eq:skm}
 is the estimate \eqref{Eq:IKMbound} below (taken from \cite{bcp}) for inexact Krasnoselskii--Mann iterations of the form
\begin{equation}\label{Eq:ikm}\tag{\textsc{ikm}}
\left\{\begin{array}{l}
z_0=x_0\\
z_{n}=(1\!-\!\alpha_{n})\,z_{n-1} + \alpha_{n}\,(Tz_{n-1}+e_{n})\quad\forall n\geq 1
\end{array}\right.
\end{equation}
where $e_{n}\in\RR^d$ is an additive perturbation of $Tz_{n-1}$.
For the rest of this paper we introduce the sequence $\tau_n$ and the real function  $\sigma:(0,\infty)\to(0,\infty)$ defined as
\begin{equation*}
\left\{\begin{array}{l}
\tau_n=\mbox{$\sum_{k=1}^n\alpha_k(1\!-\!\alpha_k)$},\\[1ex]
\sigma(y)=\min\{1,1/\sqrt{\pi y}\}.
\end{array}\right.
\end{equation*}

\begin{theorem}\label{Thm:BCP} If $\kappa\geq 0$ is such that  $\|Tz_n-x_0\|\leq\kappa$ for all $n\geq 1$, then 
\begin{equation}\label{Eq:IKMbound}
\|z_n-Tz_n\|\leq \kappa\,\sigma(\tau_n)+\sum_{k=1}^n2\,\alpha_k\|e_k\|\,\sigma(\tau_n\!-\!\tau_k)+2\,\|e_{n+1}\|.
\end{equation}
Moreover, if $\tau_n\to\infty$ and $\|e_n\|\to 0$ with $S\triangleq\sum_{n=1}^\infty\alpha_n\|e_{n}\|<\infty$, then
\eqref{Eq:IKMbound} holds with $\kappa=2\mathop{\rm dist}(x_0,\fix(T))+S$, and we have $\|z_n-Tz_n\|\to 0$ as well as
$z_n\to x^*$ for some fixed point $x^*\in\fix(T)$.
\end{theorem}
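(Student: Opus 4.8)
The estimate \eqref{Eq:IKMbound} is quoted from \cite{bcp}, so I take it as given and concentrate on the ``Moreover'' assertion. The plan is: (a) extract from the stated hypotheses an admissible value of $\kappa$; (b) let $n\to\infty$ in \eqref{Eq:IKMbound} to obtain $\|z_n-Tz_n\|\to 0$; and (c) upgrade this to convergence of the whole sequence $(z_n)$ to a fixed point via a quasi-Fej\'er argument. For (a), fix any $p\in\fix(T)$, which exists since $\fix(T)\neq\emptyset$. Writing $z_n-p=(1\!-\!\alpha_n)(z_{n-1}-p)+\alpha_n(Tz_{n-1}-Tp)+\alpha_n e_n$ and using convexity of the norm together with nonexpansiveness of $T$ gives the basic recursion $\|z_n-p\|\le\|z_{n-1}-p\|+\alpha_n\|e_n\|$. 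Iterating yields $\|z_n-p\|\le\|x_0-p\|+S$ for every $n$, so that $(z_n)$ and $(Tz_n)$ are bounded; moreover $\|Tz_n-x_0\|\le\|Tz_n-Tp\|+\|p-x_0\|\le\|z_n-p\|+\|x_0-p\|\le 2\|x_0-p\|+S$, and taking the infimum over $p\in\fix(T)$ shows that $\kappa=2\mathop{\rm dist}(x_0,\fix(T))+S$ is admissible in \eqref{Eq:IKMbound}.

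For (b), I would bound each of the three terms on the right-hand side of \eqref{Eq:IKMbound}. The first term $\kappa\,\sigma(\tau_n)$ vanishes because $\tau_n\to\infty$ while $\sigma$ is nonincreasing with $\sigma(y)\to 0$ as $y\to\infty$; the last term $2\|e_{n+1}\|$ vanishes since $\|e_n\|\to 0$. The middle convolution term is the only delicate point: using that $\sigma$ is nonincreasing and $\le 1$, I split the sum at a fixed index $m$. The tail $\sum_{k=m+1}^n 2\alpha_k\|e_k\|\,\sigma(\tau_n-\tau_k)\le 2\sum_{k>m}\alpha_k\|e_k\|$ is arbitrarily small once $m$ is large, because $S<\infty$; and for that fixed $m$ the head $\sum_{k=1}^m 2\alpha_k\|e_k\|\,\sigma(\tau_n-\tau_k)\le 2S\,\sigma(\tau_n-\tau_m)\to 0$ as $n\to\infty$. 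A routine $\varepsilon$--$m$ argument then gives $\|z_n-Tz_n\|\to 0$.

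For (c), the recursion in (a) shows that $\|z_n-p\|+\sum_{k>n}\alpha_k\|e_k\|$ is nonincreasing and bounded below, hence $\lim_n\|z_n-p\|$ exists for every $p\in\fix(T)$ (the quasi-Fej\'er property). Since $(z_n)$ is bounded in $\RR^d$, Bolzano--Weierstrass produces a subsequence $z_{n_j}\to x^*$; continuity of $T$ together with $\|z_n-Tz_n\|\to 0$ forces $x^*\in\fix(T)$, and then $\lim_n\|z_n-x^*\|=\lim_j\|z_{n_j}-x^*\|=0$, i.e.\ $z_n\to x^*$. I expect the only genuinely technical point to be the convolution-sum estimate in (b), a Toeplitz/Kronecker-type lemma; by contrast the passage from ``quasi-Fej\'er plus a single cluster point in $\fix(T)$'' to convergence of the whole sequence is immediate in finite dimension, whereas the Hilbert-space analogue would instead require Opial's lemma.
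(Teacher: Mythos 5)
Your proof is correct and, for the part the paper actually writes out---the recursion $\|z_n-\bar x\|\leq\|z_{n-1}-\bar x\|+\alpha_n\|e_n\|$, the monotonicity of $\|z_n-\bar x\|+\sum_{k>n}\alpha_k\|e_k\|$, and the cluster-point argument giving $z_n\to x^*\in\fix(T)$---it coincides with the paper's proof. The only difference is that where the paper simply cites \cite{bcp} (Proposition 2 for the admissibility of $\kappa=2\operatorname{dist}(x_0,\fix(T))+S$, and Theorem 1 again for $\|z_n-Tz_n\|\to 0$), you derive both facts directly, and your derivations (the bound $\|Tz_n-x_0\|\leq 2\|x_0-p\|+S$ for every $p\in\fix(T)$, and the head/tail splitting of the convolution sum using that $\sigma$ is nonincreasing with $\sigma(\tau_n-\tau_m)\to 0$) are valid.
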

\begin{proof} The bound \eqref{Eq:IKMbound} was established in \cite[Theorem 1]{bcp}. 
Also, when $\tau_n\to\infty$, $\|e_n\|\to 0$, and $S<\infty$,  from \cite[Proposition 2]{bcp} we have  
that $\|Tz_n-x_0\|\leq\kappa$ holds for  $\kappa=2\mathop{\rm dist}(x_0, \fix(T)) + S$,
and using \cite[Theorem 1]{bcp} again we get $\|z_n-Tz_n\|\to 0$.
The convergence of $z_n$ follows by a standard argument. Namely, consider an arbitrary fixed point $\bar x\in\fix(T)$.
The triangle inequality and non-expansivity of $T$ yields
\begin{eqnarray*}
\norm{z_{n} - \bar x}&=&\norm{(1\!-\!\alpha_{n})(z_{n-1} - \bar x) + \alpha_{n}\,(Tz_{n-1} - T\bar x+e_{n})}\\
&\leq &\norm{z_{n-1} - \bar x}  + \alpha_{n}\|e_{n}\|.
\end{eqnarray*}
Adding $\varepsilon_n= \sum_{k> n}\alpha_{k}\|e_{k}\|$ on both sides 
it follows that $\|z_n-\bar x\|+\varepsilon_{n}$  is decreasing and hence convergent.
Since $\varepsilon_n\to 0$ we conclude that $\norm{z_{n} - \bar x}$ converges as well. 
Hence $z_n$ is bounded and  has a cluster point $x^*$. Now,  $\|z_n-Tz_n\|\to 0$ implies that $x^*\in\fix(T)$
 and therefore $\|z_n-x^*\|$ converges, so that in fact $\|z_n-x^*\|\to 0$.
\end{proof}
\begin{remark}
The bound \eqref{Eq:IKMbound} is also trivially guaranteed when the operator
 $T$ has a bounded range:  just take any  $\kappa \geq \sup_{x\in \RR^d} \|Tx- x_0\|$.
\end{remark}

\vspace{1ex}
\subsection{A simple reduction trick} A direct application of \cref{Thm:BCP} to \eqref{Eq:skm} would require $\|U_n\|\to 0$ and
$\sum_{n\geq 1}\alpha_n\|U_n\|<\infty$, which is rather restrictive as it rules out even the 
simplest case where the $U_n$'s are i.i.d.. Fortunately, a reformulation of \eqref{Eq:skm}
using an auxiliary averaged sequence $\overline{U}_n$ yields convergence 
under much weaker conditions. Namely, let $\overline{U}_0=0$ and 
defined recursively $(\overline U_n)_{n\in\NN}$   by 
\begin{equation}\label{Eq:AveragedNoise} \tag{\textsc u}
  \overline U_{n}= (1\!-\!\alpha_{n})\,\overline U_{n-1} + \alpha_{n}\,U_{n}.
\end{equation}
Substracting this from \eqref{Eq:skm} and denoting $z_n = x_n - \overline U_n$ we get
$$z_{n}=(1\!-\!\alpha_{n})\,z_{n-1} + \alpha_{n}\,Tx_{n-1}$$
which is precisely of the form \eqref{Eq:ikm} with $e_{n}=Tx_{n-1}-Tz_{n-1}$ satisfying
\begin{equation*}
\|e_{n}\|\leq \|x_{n-1}-z_{n-1}\|=\|\overline{U}_{n-1}\|.
\end{equation*}

The advantage of this alternative form of the iteration is that, in contrast with the original noise $U_n$, 
the averages $\overline{U}_n$ may converge to zero fast enough so that  the conditions of  \cref{Thm:BCP}
are met with high probability or even almost surely. As an illustration, in the i.i.d. case with $\alpha_n={1}/{n}$ we have
$\overline{U}_n=\frac{1}{n}\sum_{k=1}^nU_k$ and the Central Limit Theorem gives
$\|\overline{U}_n\|\sim O({1}/{\sqrt{n}})$ almost surely. 
In what follows we identify conditions on the sequence $(\alpha_n)_{n\in\NN}$ which 
guarantee that $\|\overline{U}_n\|\to 0$ and $\sum_{n\geq 1}\alpha_n\|\overline{U}_{n-1}\|<\infty$, 
implying the convergence of the iterates $z_n$, and {\em a fortiori} of $x_n$, towards some fixed point $x^*\in\fix(T)$. 
We stress that, although our arguments exploit the averaged noise process $\overline{U}_n$, we 
derive the convergence of the original iterates $x_n$, and not just 
ergodic convergence of some averaged iterates. 

\begin{remark} The previous rewriting of \eqref{Eq:skm} in terms of the sequences 
$(z_n)_{n\in\NN}$ and $(\overline{U}_n)_{n\in\NN}$, reveals that  \eqref{Eq:skm}
 carries implicitly a form of variance reduction that operates automatically as the iterates evolve. We will
come back to this issue in \cref{remark3.6}, after establishing our error bounds for the fixed-point residuals.
\end{remark}

\subsection{Convergence of iterates for martingale difference noise}
We proceed to state our first convergence result, which concerns the case where the noise $U_n$ is a martingale difference sequence satisfying \eqref{H1}. For the rest of this paper, for each $n\in\NN$ and $0\leq k\leq n$ we define the quantities
\begin{eqnarray*}
 \pi_k^n&=&\mbox{$\alpha_k\prod_{i=k+1}^n(1\!-\!\alpha_i)$},\\[1ex]
\sumprob_n&=&\mbox{$\sqrt{\sum_{k=1}^n(\pi_k^n)^2\Var_k^2}$},
\end{eqnarray*}
and establish the following straightforward estimate with $\mu$ chosen as in \eqref{H2p}.
\begin{lemma}\label{Lemma:Expectation_estimate}
Under \eqref{H1} we have
 $\overline{U}_n=\sum_{k=1}^n\pi_k^n\,U_k$ and
$\EE(\|\overline{U}_n\|)\leq  \normeq\, \sumprob_n$.
\end{lemma}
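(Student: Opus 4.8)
The plan is to establish the identity $\overline{U}_n = \sum_{k=1}^n \pi_k^n U_k$ first by induction on $n$, and then derive the expectation bound from it. For the identity, the base case $n=0$ gives $\overline{U}_0 = 0$ (empty sum), consistent with the definition in \eqref{Eq:AveragedNoise}. For the inductive step, I would assume $\overline{U}_{n-1} = \sum_{k=1}^{n-1}\pi_k^{n-1}U_k$ and plug this into the recursion $\overline{U}_n = (1-\alpha_n)\overline{U}_{n-1} + \alpha_n U_n$, obtaining $\overline{U}_n = (1-\alpha_n)\sum_{k=1}^{n-1}\pi_k^{n-1}U_k + \alpha_n U_n$. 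The key observation is the relation $\pi_k^n = (1-\alpha_n)\pi_k^{n-1}$ for $k\leq n-1$, which follows directly from the product definition $\pi_k^n = \alpha_k\prod_{i=k+1}^n(1-\alpha_i)$, together with $\pi_n^n = \alpha_n$. Substituting these yields $\overline{U}_n = \sum_{k=1}^{n-1}\pi_k^n U_k + \pi_n^n U_n = \sum_{k=1}^n \pi_k^n U_k$, closing the induction.

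For the expectation bound, the natural route is to pass through the Euclidean norm using \eqref{H2p}, namely $\EE(\|\overline{U}_n\|) \leq \mu\,\EE(\|\overline{U}_n\|_2)$, and then apply Jensen's inequality to get $\EE(\|\overline{U}_n\|_2) \leq \sqrt{\EE(\|\overline{U}_n\|_2^2)}$. It remains to show $\EE(\|\overline{U}_n\|_2^2) = \sum_{k=1}^n(\pi_k^n)^2\Var_k^2 = \sumprob_n^2$. Expanding the square of the sum $\overline{U}_n = \sum_{k=1}^n\pi_k^n U_k$ gives cross terms $\EE(\langle U_j, U_k\rangle)$ for $j\neq k$, and here the martingale difference property \eqref{H1} enters: for $j < k$, conditioning on $\FF_{k-1}$ and using $\escon{U_k}{\FF_{k-1}} = 0$ together with the $\FF_{k-1}$-measurability of $U_j$ (by adaptedness) kills the cross term. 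The diagonal terms contribute $(\pi_k^n)^2\,\EE(\|U_k\|_2^2) = (\pi_k^n)^2\Var_k^2$ by the definition of $\Var_k^2$ in \eqref{H1}, giving the claimed equality.

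**The main subtlety** — more bookkeeping than genuine obstacle — is the interchange of expectation and the finite sum when justifying that the cross terms vanish: one needs the $U_k$ to be in $L^2$ (guaranteed by \eqref{H1}) so that $\EE(\langle U_j, U_k\rangle)$ is well-defined and the tower property applies, which is immediate since the sum is finite and each term is integrable by Cauchy--Schwarz. Everything else is elementary algebra with the $\pi_k^n$ coefficients. I would present the identity and the second-moment computation as the two displayed steps, keeping the proof to a few lines.
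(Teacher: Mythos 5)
Your proposal is correct and follows essentially the same route as the paper: the identity is verified by the same induction, and the bound comes from the norm-equivalence constant $\mu$, Jensen's inequality, and the vanishing of the cross terms $\EE(\langle U_j,U_k\rangle)$ via the martingale difference property, which the paper leaves implicit in the equality $\EE(\|\overline{U}_n\|_2^2)=\sum_{k=1}^n(\pi_k^n)^2\,\EE(\|U_k\|_2^2)$. You simply spell out the conditioning argument and the integrability bookkeeping that the paper's one-line proof takes for granted.
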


\begin{proof}
The identity $\overline{U}_n=\sum_{k=1}^n\pi_k^n\,U_k$ is easily checked inductively, and then Jensen's inequality gives
 \[
 \EE(\|\overline{U}_n\|)\leq \normeq \sqrt{\EE(\|\overline{U}_n\|_2^2)}= \normeq \sqrt{\mbox{$\sum_{k=1}^n(\pi_k^n)^2\,\EE(\|U_k\|^2_2)$}}
 =  \normeq\,  \sumprob_n. 
 \]

 \end{proof}
 
Our first result on almost sure convergence requires the additional assumption
\begin{equation} \label{H2}\tag{\mbox{$\textsc{h}_2$}}
\begin{cases}
(a)&\sum_{k=1}^\infty \alpha_{k}\sumprob_{k-1}< \infty,\\[1ex]
(b)&\sum_{k=1}^\infty \alpha_k^2 \Var_k^2<\infty.
\end{cases}
\end{equation}
\begin{theorem}\label{Thm:First} 
Assume \eqref{H0}, \eqref{H1}, and \eqref{H2}.  
Then, the sequence  $(x_n)_{n\in\NN}$ generated by \eqref{Eq:skm} converges almost surely to some (random) point $x^* \in \fix(T)$.
\end{theorem}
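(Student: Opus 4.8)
The plan is to apply \cref{Thm:BCP} to the reduced sequence $z_n=x_n-\overline U_n$. As already observed above, $z_n$ follows the \eqref{Eq:ikm} recursion with perturbations $e_n=Tx_{n-1}-Tz_{n-1}$ satisfying $\|e_n\|\le\|\overline U_{n-1}\|$, so it suffices to verify, on an event of probability one, the three hypotheses in the second part of \cref{Thm:BCP}: $\tau_n\to\infty$, $\|e_n\|\to0$, and $S=\sum_{n\ge1}\alpha_n\|e_n\|<\infty$. The first is immediate from \eqref{H0}. Once the other two are in place, \cref{Thm:BCP} yields $z_n\to x^*$ for some $x^*\in\fix(T)$ almost surely; and since $x_n=z_n+\overline U_n$, simultaneously establishing $\|\overline U_n\|\to0$ a.s. gives both $\|e_n\|\to0$ and $x_n\to x^*$. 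So the proof reduces to two statements about the averaged noise: (i) $\sum_{n\ge1}\alpha_n\|\overline U_{n-1}\|<\infty$ a.s., and (ii) $\|\overline U_n\|\to0$ a.s.

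For (i), I would take expectations and combine \cref{Lemma:Expectation_estimate} with \eqref{H2}(a): by Tonelli,
\[
\EE\Big(\sum_{n\ge1}\alpha_n\|\overline U_{n-1}\|\Big)=\sum_{n\ge1}\alpha_n\,\EE\big(\|\overline U_{n-1}\|\big)\le\normeq\sum_{n\ge1}\alpha_n\,\sumprob_{n-1}<\infty,
\]
so the nonnegative series $\sum_n\alpha_n\|\overline U_{n-1}\|$ is finite almost surely; since $\|e_n\|\le\|\overline U_{n-1}\|$, the same bound controls $S$.

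For (ii), which I expect to be the main obstacle, I would work with the Euclidean square norm and exploit the martingale difference property \eqref{H1}. From \eqref{Eq:AveragedNoise} and $\escon{U_n}{\FF_{n-1}}=0$ the cross term vanishes, giving
\[
\escon{\|\overline U_n\|_2^2}{\FF_{n-1}}=(1-\alpha_n)^2\|\overline U_{n-1}\|_2^2+\alpha_n^2\,\escon{\|U_n\|_2^2}{\FF_{n-1}}.
\]
The perturbation term is a.s. summable because $\EE\big(\sum_n\alpha_n^2\escon{\|U_n\|_2^2}{\FF_{n-1}}\big)=\sum_n\alpha_n^2\Var_n^2<\infty$ by \eqref{H2}(b). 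Writing $(1-\alpha_n)^2=1-\alpha_n(2-\alpha_n)$ and invoking the Robbins--Siegmund almost-supermartingale convergence lemma, I conclude that $\|\overline U_n\|_2^2$ converges a.s. to a finite limit and that $\sum_n\alpha_n(2-\alpha_n)\|\overline U_{n-1}\|_2^2<\infty$ a.s. Since $\alpha_n(2-\alpha_n)\ge\alpha_n(1-\alpha_n)$, assumption \eqref{H0} forces $\sum_n\alpha_n(2-\alpha_n)=\infty$, so the a.s. limit of $\|\overline U_n\|_2^2$ must be $0$; equivalence of norms in $\RR^d$ then gives $\|\overline U_n\|\to0$ a.s.

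Putting (i) and (ii) together on the common probability-one event, \cref{Thm:BCP} applies pathwise to $(z_n)$ and produces a limit $x^*\in\fix(T)$ with $z_n\to x^*$; adding $\overline U_n\to0$ gives $x_n\to x^*$, and $x^*$ is $\FF$-measurable as a pointwise limit of the $z_n$. The delicate point is precisely the almost sure vanishing of $\overline U_n$: the summability hypothesis \eqref{H2}(a) is what controls the error series $S$ but does not by itself kill $\overline U_n$ pointwise, which is why the second-moment/supermartingale argument based on \eqref{H1} and \eqref{H2}(b) is needed.
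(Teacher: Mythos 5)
Your proof is correct, and while it follows the paper's overall scheme (reduce to $z_n=x_n-\overline U_n$, verify the hypotheses of \cref{Thm:BCP}, handle $\sum_n\alpha_n\|\overline U_{n-1}\|<\infty$ exactly as the paper does via \cref{Lemma:Expectation_estimate}, monotone convergence and \eqref{H2}(a)), your treatment of the key step $\overline U_n\to 0$ is genuinely different from the paper's. The paper first proves that $\|\overline U_n\|$ is a.s.\ bounded (Doob's convergence theorem applied to the $L^2$-bounded martingale $M_n=\sum_{k\le n}\alpha_k U_k$, using \eqref{H2}(b), together with the identity $M_n=\overline U_n+\sum_{k\le n}\alpha_k\overline U_{k-1}$), and then invokes the ODE method for stochastic approximation: Bena\"im's noise condition and Limit Set Theorem show that the $\omega$-limit set of $(\overline U_n)$ is an internally chain transitive set of $\dot U=-U$, whose only ICT set is $\{0\}$. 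You instead run a direct second-moment argument: conditioning in \eqref{Eq:AveragedNoise} kills the cross term by \eqref{H1}, giving $\escon{\|\overline U_n\|_2^2}{\FF_{n-1}}=(1-\alpha_n)^2\|\overline U_{n-1}\|_2^2+\alpha_n^2\escon{\|U_n\|_2^2}{\FF_{n-1}}$, and Robbins--Siegmund (which the paper itself uses later, in \cref{Thm:Vanishing_Noise}) plus \eqref{H2}(b) yields a.s.\ convergence of $\|\overline U_n\|_2^2$ and $\sum_n\alpha_n(2-\alpha_n)\|\overline U_{n-1}\|_2^2<\infty$; divergence of $\sum_n\alpha_n(1-\alpha_n)$ from \eqref{H0} then forces the limit to be zero. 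Your route is more elementary and self-contained: it dispenses with the separate boundedness step and with the stochastic-approximation/ICT machinery, it uses only hypotheses \eqref{H0}, \eqref{H1}, \eqref{H2} as stated (in particular it never needs $\alpha_n\to0$, which the ODE method implicitly requires), and it delivers as a by-product the summability of the weighted squared averages. What the paper's ODE argument buys in exchange is a template that extends to situations where the averaged-noise dynamics is not a plain linear contraction, but for this theorem your supermartingale argument is a complete and arguably cleaner substitute.
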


\begin{proof} 
We claim that,  almost surely, the random variable $X_\infty\!\triangleq \!\sum_{k=1}^\infty\alpha_{k} \|\overline U_{k-1}\| $ is finite
 and $\overline U_n \to 0$. The convergence of $x_n$ then follows directly from 
  \cref{Thm:BCP} which gives $z_n\to x^*$ for some $x^*\in\fix(T)$, and 
therefore $x_n=z_n+\overline U_n\to x^*$.

To establish the previous claim, 
let $X_n=  \sum_{k=1}^{n} \alpha_{k} \|\overline U_{k-1}\|$.  From \cref{Lemma:Expectation_estimate}  we have $\EE( X_n)\leq \Delta$ with $\Delta = \normeq \sum_{k=1}^\infty \alpha_{k}\sumprob_{k-1}$, so that Lebesgue's monotone convergence theorem and \eqref{H2}(a) yield $\EE (X_\infty)= \lim_ {n \to \infty }\EE(X_n) \leq\Delta<\infty$, which implies that $X_\infty$ is finite almost surely. 

Next, we use a stochastic approximation argument to show that $\overline U_n\to 0$, for which
we rewrite the recursion \eqref{Eq:AveragedNoise} as
\begin{equation*}
  \overline U_{n+1} -\overline U_n = \alpha_{n+1} \left (- \overline U_n +U_{n+1}\right ).
\end{equation*}
We first show that $\|\overline{U}_n\|$ is bounded almost surely. 
Indeed, consider the martingale $M_{n} = \sum_{k=1}^{n} \alpha_{k} U_{k}$ which is bounded in $L^2$ with
\[
\EE \left (\norm{M_{n}}^2_2 \right )= \sum_{k=1}^{n} \alpha_{k}^2\, \EE \left (\norm{U_{k}}^2_2 \right ) \leq   \sum_{k=1}^{\infty} \alpha_{k}^2 \Var_k^2<\infty.
\]
Doob's martingale convergence theorem implies that $M_n$ converges to an almost surely finite random variable $M_\infty$, hence 
$\|M_n\|$ remains bounded. Now, from \eqref{Eq:AveragedNoise} we have 
\[
M_{n} = \sum_{k=1}^n\alpha_k U_k = \sum_{k=1}^n\overline U_{k} - (1-\alpha_{k} )\overline U_{k-1}=\overline U_{n} +\sum_{k=1}^n\alpha_{k}\overline U_{k-1}
\]
and then a simple triangle inequality gives
$$\|\overline U_{n} \|=\mbox{$\| M_{n} -\sum_{k=1}^n\alpha_{k}\overline U_{k-1}\|$}\leq \|M_n\|+X_\infty$$
which implies that $\|\overline{U}_n\|$ is bounded almost surely. 

To complete our proof that $\overline{U}_n\to 0$, we note that \cite[Proposition 4.2]{Benaim99} (applied with $q=2$ and $\gamma_n=\alpha_n$) 
remains valid under the assumption $\sum_{k=1}^\infty\alpha_k^2\EE(\|U_k\|_2^2)<\infty$ which is precisely \eqref{H2}(b).  This, combined 
with the Limit Set Theorem \cite[Theorem 5.7]{Benaim99} implies that the $\omega$-limit set of $(\overline U_n)_{n\in\NN}$ is almost surely an Internally 
Chain Transitive set of the differential equation $\dot U=-U$. Since $\{0\}\subseteq \RR^d$ is a global attractor, and hence the only ICT, 
we conclude that $\overline U_n \to 0$ almost surely as required.
\end{proof}

\begin{remark} Note that when $\fix(T)$ is not a singleton, the limit $x^*$ of the random sequence $(x_n)_{n\in\NN}$ in \cref{Thm:First},
is itself a random point in $\fix(T)$ which depends on the specific realization of the full noise sequence $(U_n)_{n\in\NN}$.
\end{remark}

\begin{example}\label{Ex:asc23}
For uniformly bounded variances $\Var_n^2\leq\Var^2$ and
stepsizes of power form $\alpha_n={1}/{(n+1)^a}$,
 \cref{Lemma:L3} in \cref{appendix1} gives $\sumprob_{n-1}\leq\theta\sqrt{\alpha_n}$
and therefore both \eqref{H0} and \eqref{H2} hold for all $a\in(\frac{2}{3},1]$.
Moreover, from \cite[Theorem 3.1.1]{chen} we obtain the
 almost sure convergence rate $n^b\|\overline U_n\|\to 0$
for all $b<a-\frac{1}{2}$. 
\end{example}

\subsection{Convergence of iterates for vanishing noise}

Another setting that ensures the convergence of the iterates $(x_n)_{n\in\NN}$ is when $\|U_n\|$ tends 
to zero in expectation compensating the divergence of the series $\sum_k\alpha_k$. Namely, suppose that 
\begin{equation*}\tag{\mbox{$\textsc{h}_3$}}\label{Eq:Vanishing_Noise}
\mbox{$\sum_{k=1}^\infty\alpha_k\,\escon{\|U_k\|}{\mathcal{F}_{k-1}}$ is finite almost surely.}
\end{equation*}
This excludes some simple situations covered by  \cref{Thm:First},
such as when the $U_n$'s are i.i.d., however it bypasses the need for \eqref{H1} and \eqref{H2}.
In particular it allows for nonvanishing $\alpha_k$'s, provided however that we assume the stronger 
condition that $\escon{\|U_k\|}{\mathcal{F}_{k-1}}$ is summable.

\begin{theorem}\label{Thm:Vanishing_Noise}
Assume \eqref{H0} and \eqref{Eq:Vanishing_Noise}. 
Then, the sequence  $(x_n)_{n\in\NN}$ generated by \eqref{Eq:skm} converges almost surely to some (random) point $x^* \in \fix(T)$. 
\end{theorem}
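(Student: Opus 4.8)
The plan is to mimic the structure of the proof of \cref{Thm:First}, again using the reduction trick: writing $z_n = x_n - \overline{U}_n$, the sequence $(z_n)$ obeys an \eqref{Eq:ikm} iteration with perturbations $e_n = Tx_{n-1} - Tz_{n-1}$ satisfying $\|e_n\| \leq \|\overline{U}_{n-1}\|$. By \cref{Thm:BCP}, it suffices to show that, almost surely, $\overline{U}_n \to 0$ and $\sum_{n\geq 1}\alpha_n\|\overline{U}_{n-1}\| < \infty$; then $z_n \to x^*$ for some $x^* \in \fix(T)$, and $x_n = z_n + \overline{U}_n \to x^*$.

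The first and main step is to control $\sum_{n\geq 1}\alpha_n\|\overline{U}_{n-1}\|$ directly, \emph{without} the $L^2$ machinery of \cref{Thm:First}. Since $\overline{U}_n = \sum_{k=1}^n \pi_k^n U_k$ with $\pi_k^n \geq 0$ and $\sum_{k=1}^n \pi_k^n \leq 1$, the triangle inequality gives $\|\overline{U}_n\| \leq \sum_{k=1}^n \pi_k^n \|U_k\|$. Taking conditional expectations and iterating, one expects a bound of the form $\EE\big(\sum_{n\geq 1}\alpha_n\|\overline{U}_{n-1}\|\big) \leq C\sum_{k\geq 1}\alpha_k\,\escon{\|U_k\|}{\FF_{k-1}}$ — but the right-hand side is itself only \emph{almost surely} finite under \eqref{Eq:Vanishing_Noise}, not finite in expectation, so a cleaner route is to argue pathwise. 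The key observation is that, from \eqref{Eq:AveragedNoise}, $\|\overline{U}_n\| \leq (1-\alpha_n)\|\overline{U}_{n-1}\| + \alpha_n\|U_n\|$, so $\|\overline{U}_n\| - \|\overline{U}_{n-1}\| \leq \alpha_n(\|U_n\| - \|\overline{U}_{n-1}\|)$; I would take conditional expectations to get a supermartingale-type inequality for $\|\overline{U}_n\|$ plus the tail $\sum_{k>n}\alpha_k\escon{\|U_k\|}{\FF_{k-1}}$, and apply the Robbins--Siegmund almost supermartingale convergence theorem. This yields both that $\|\overline{U}_n\|$ converges almost surely to some limit $\ell \geq 0$ and that $\sum_{n\geq 1}\alpha_n\|\overline{U}_{n-1}\| < \infty$ almost surely (the latter because the ``increasing'' part of the Robbins--Siegmund decomposition is summable, using $\sum_k\alpha_k(1-\alpha_k)=\infty$ is not needed here but $\sum_k\alpha_k\escon{\|U_k\|}{\FF_{k-1}}<\infty$ is exactly \eqref{Eq:Vanishing_Noise}).

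The second step is to upgrade $\|\overline{U}_n\| \to \ell$ to $\ell = 0$. This follows immediately from the previous step: if $\sum_{n\geq 1}\alpha_n\|\overline{U}_{n-1}\| < \infty$ almost surely while $\sum_{n\geq 1}\alpha_n = \infty$ (which holds since $\sum_n\alpha_n(1-\alpha_n)=\infty$ forces $\sum_n\alpha_n=\infty$), then $\liminf_n\|\overline{U}_{n-1}\| = 0$, and combined with the existence of the limit $\ell$ we get $\ell = 0$, i.e. $\overline{U}_n \to 0$.

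The final step is routine: with $\overline{U}_n \to 0$ and $S \triangleq \sum_{n\geq 1}\alpha_n\|e_n\| \leq \sum_{n\geq 1}\alpha_n\|\overline{U}_{n-1}\| < \infty$, the hypotheses of the second part of \cref{Thm:BCP} hold for the \eqref{Eq:ikm} sequence $(z_n)$ (using also $\tau_n \to \infty$, which is \eqref{H0}), so $z_n \to x^* \in \fix(T)$, and therefore $x_n \to x^*$. I expect the only delicate point to be the bookkeeping in applying Robbins--Siegmund — specifically checking that $\escon{\|\overline{U}_n\| - \|\overline{U}_{n-1}\|}{\FF_{n-1}} \leq \alpha_n\escon{\|U_n\|}{\FF_{n-1}}$ with no positive multiplicative perturbation of $\|\overline{U}_{n-1}\|$, so that the theorem applies cleanly and simultaneously delivers convergence of $\|\overline{U}_n\|$ and summability of $\sum_n\alpha_n\|\overline{U}_{n-1}\|$.
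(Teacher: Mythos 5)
Your proof is correct, and while it shares the paper's overall frame (the reduction $z_n=x_n-\overline U_n$ plus \cref{Thm:BCP}, so that everything hinges on showing $\overline U_n\to 0$ and $\sum_n\alpha_n\|\overline U_{n-1}\|<\infty$ almost surely), your way of establishing these two facts is genuinely different from the paper's. The paper applies the Robbins--Siegmund theorem \cite{rs} to the partial sums $N_n=\sum_{k\leq n}\alpha_k\|U_k\|$, concluding that the series $\sum_k\alpha_k U_k$ is absolutely convergent almost surely; it then introduces the tails $u_n=\sum_{k>n}\alpha_k U_k$, shows that $w_n=\overline U_n+u_n=\sum_{k}\pi_k^n u_{k-1}\to 0$ via the Toeplitz lemma (this is where $\sum_k\alpha_k=\infty$ enters, through $\pi_k^n\to 0$), and finally obtains $\sum_k\alpha_k\|\overline U_{k-1}\|\leq\sum_k\alpha_k\|U_k\|<\infty$ by telescoping the inequality $\|\overline U_k\|\leq(1-\alpha_k)\|\overline U_{k-1}\|+\alpha_k\|U_k\|$. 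You instead apply Robbins--Siegmund directly to $V_n=\|\overline U_n\|$, using that same inequality in conditional expectation: since $\overline U_{n-1}$ is $\FF_{n-1}$-measurable, $\escon{\|\overline U_n\|}{\FF_{n-1}}\leq\|\overline U_{n-1}\|-\alpha_n\|\overline U_{n-1}\|+\alpha_n\,\escon{\|U_n\|}{\FF_{n-1}}$, and \eqref{Eq:Vanishing_Noise} makes the additive perturbation summable, so a single application yields both the almost sure convergence of $\|\overline U_n\|$ and the summability of $\sum_n\alpha_n\|\overline U_{n-1}\|$; the limit is then forced to be zero since \eqref{H0} implies $\sum_n\alpha_n=\infty$. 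Your route is more streamlined, bypassing the tail-sum construction and the Toeplitz lemma, whereas the paper's route proves the stronger pathwise fact that $\sum_k\alpha_k\|U_k\|$ itself is finite, from which both claims follow at once. Two small remarks: as in the paper's own use of \cite{rs}, you should invoke the version of Robbins--Siegmund for nonnegative, not necessarily integrable, adapted variables, since \eqref{Eq:Vanishing_Noise} does not guarantee $\EE\|U_n\|<\infty$; and your worry about a multiplicative $(1+a_n)$ term is unfounded, as your inequality has none, so the theorem applies cleanly exactly as you anticipate.
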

\begin{proof}
As in the proof of \cref{Thm:First}, it suffices to show that almost surely we have  $\overline{U}_{n}\to 0$ and 
$\sum_{k=1}^\infty\alpha_k\|\overline{U}_{k-1}\|<\infty$.

 Denote $N_n=\sum_{k=1}^n \alpha_k \|U_k\|$ and $\eta_n=\alpha_n\,\escon{\|U_n\|}{\mathcal{F}_{n-1}}$
 so that 
\[
\mbox{$\escon{N_{n}}{\mathcal{F}_{n-1}}=N_{n-1}+\alpha_{n}\,\escon{\|U_{n}\|}{\mathcal{F}_{n-1}}= N_{n-1}+\eta_{n}.$}
\] 
From \eqref{Eq:Vanishing_Noise} we have $\sum_{k=1}^\infty \eta_k<\infty$ and \cite[Theorem 1]{rs} implies that $N_n$ 
has a finite limit. Thus, the series $\sum_{k\geq 1}\alpha_k\,U_k$ is absolutely convergent,
and therefore its tail sum $u_n= \sum_{k> n}\alpha_{k}\,U_{k}$ is well defined and converges to 0. Adding $u_{n}$ on both sides of \eqref{Eq:AveragedNoise} and setting 
$w_n =  \overline U_{n} + u_{n}$ we get 
$$w_{n}=(1-\alpha_n)\,w_{n-1}+\alpha_n\, u_{n-1}$$
so that $w_n=\sum_{k=0}^n\pi_k^n\, u_{k-1}$. Now,  for $n\to\infty$ and $k$ fixed we have $\pi_k^n\to 0$  since
$$
\pi_k^n=\mbox{$\alpha_k\prod_{i=k+1}^n(1\!-\!\alpha_i)\leq \alpha_k\,\exp(-\sum_{i=k+1}^n\alpha_i)\to 0$.}
$$
Since $u_n\to 0$, using Toeplitz Lemma we get $w_n\to 0$ and therefore $\overline{U}_n\to 0$ as well.
Finally, from \eqref{Eq:AveragedNoise} we get $\|\overline{U}_{k}\|\leq(1-\alpha_{k})\|\overline{U}_{k-1}\|+\alpha_{k}\|U_{k}\|$ so that
$$
\mbox{$\sum_{k\geq 1}\alpha_k\|\overline{U}_{k-1}\|$}\leq \mbox{$\sum_{k\geq 1}(\|\overline{U}_{k-1}\|-\|\overline{U}_k\|+\alpha_k\|{U}_{k}\|) 
=\sum_{k\geq 1}\alpha_{k}\|U_{k}\|<\infty$,}
$$
and the convergence $x_n\to x^*\in\fix(T)$ follows as in the proof of  \cref{Thm:First}.
\end{proof}
Combining  \cref{Thm:First,Thm:Vanishing_Noise}  we readily get the following
\begin{corollary} Assume \eqref{H0}, \eqref{H2}, and suppose that 
$U_n=W_n+V_n$ where $W_n$ satisfies \eqref{H1} and $V_n$ satisfies \eqref{Eq:Vanishing_Noise}. 
Then, the sequence  $(x_n)_{n\in\NN}$ generated by \eqref{Eq:skm} converges almost surely to some (random) point $x^* \in \fix(T)$. 
\end{corollary}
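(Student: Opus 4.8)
The plan is to reduce the statement to the two preceding theorems by splitting the \emph{averaged} noise $\overline U_n$, rather than the iterates themselves (splitting the iterates would be useless since $T$ is nonlinear). Recall from the reduction trick that if we set $z_n=x_n-\overline U_n$, then $z_n$ solves \eqref{Eq:ikm} with $e_n=Tx_{n-1}-Tz_{n-1}$ and $\|e_n\|\leq\|\overline U_{n-1}\|$; consequently, exactly as in \cref{Thm:First} and \cref{Thm:Vanishing_Noise}, the convergence $x_n\to x^*\in\fix(T)$ follows from \cref{Thm:BCP} as soon as one establishes, almost surely, that (i) $\overline U_n\to 0$, and (ii) $\sum_{k\geq 1}\alpha_k\|\overline U_{k-1}\|<\infty$. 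So it suffices to prove (i)--(ii) under the present decomposition $U_n=W_n+V_n$.

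First I would introduce $(\overline W_n)_{n\in\NN}$ and $(\overline V_n)_{n\in\NN}$, defined by the recursion \eqref{Eq:AveragedNoise} driven by $W_n$ and $V_n$ in place of $U_n$, both started at $0$. Since \eqref{Eq:AveragedNoise} is an affine recursion in its driving sequence and is initialized at $0$, a one-line pathwise induction gives $\overline U_n=\overline W_n+\overline V_n$ for every $n$; note that this requires no measurability nor martingale property of $V_n$.

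Next I would invoke the two arguments separately on the two pieces. Since $W_n$ satisfies \eqref{H1} --- with $\Var_n^2=\EE(\norm{W_n}^2_2)$, which is precisely the quantity entering \eqref{H2} --- and \eqref{H0}, \eqref{H2} are assumed, the proof of \cref{Thm:First}, read with $W_n$ in place of $U_n$, applies verbatim to $\overline W_n$ and yields $\overline W_n\to 0$ a.s.\ together with $\sum_{k\geq 1}\alpha_k\|\overline W_{k-1}\|<\infty$ a.s. Likewise, since $V_n$ satisfies \eqref{Eq:Vanishing_Noise} and \eqref{H0} holds, the proof of \cref{Thm:Vanishing_Noise}, read with $V_n$ in place of $U_n$, applies to $\overline V_n$ and yields $\overline V_n\to 0$ a.s.\ and $\sum_{k\geq 1}\alpha_k\|\overline V_{k-1}\|<\infty$ a.s.

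Finally I would recombine via the triangle inequality: $\|\overline U_n\|\leq\|\overline W_n\|+\|\overline V_n\|\to 0$ almost surely, and
\[
\sum_{k\geq 1}\alpha_k\|\overline U_{k-1}\|\ \leq\ \sum_{k\geq 1}\alpha_k\|\overline W_{k-1}\|+\sum_{k\geq 1}\alpha_k\|\overline V_{k-1}\|\ <\ \infty
\]
almost surely, which are exactly (i) and (ii). Then \cref{Thm:BCP} gives $z_n\to x^*\in\fix(T)$, whence $x_n=z_n+\overline U_n\to x^*$. There is no genuinely hard step here --- the result is a superposition of the two earlier theorems --- and the only points demanding care are bookkeeping: checking that the hypothesis packages \eqref{H1}--\eqref{H2} and \eqref{Eq:Vanishing_Noise} attach to the correct summands $W_n$ and $V_n$, and that the decomposition $\overline U_n=\overline W_n+\overline V_n$ is legitimate, which it is because \eqref{Eq:AveragedNoise} is a deterministic affine recursion applied sample path by sample path.
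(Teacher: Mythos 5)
Your proof is correct and is exactly the combination the paper intends: the paper states the corollary as an immediate consequence of \cref{Thm:First,Thm:Vanishing_Noise}, and your argument makes this precise by splitting $\overline U_n=\overline W_n+\overline V_n$, running each theorem's (purely noise-dependent) argument on its own piece, and concluding via \cref{Thm:BCP} as in those proofs. No gaps; the only care needed is the bookkeeping you already flag, namely that \eqref{H2} is read with $\Var_n^2=\EE(\|W_n\|_2^2)$.
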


 \cref{Thm:Vanishing_Noise} is similar to Corollary 2.7 in \cite[Combettes and Pesquet]{cp1}. The latter
 holds in infinite dimension but is restricted to separable Hilbert spaces. It
establishes the weak convergence 
$x_n\rightharpoonup x^*\in \fix(T)$ under the assumption
\begin{equation*}
\mbox{$\sum_{n= 1}^\infty\alpha_n\,\sqrt{\escon{\|U_n\|^2}{\mathcal{F}_{n-1}}}$ is finite almost surely}
\end{equation*}
which is slightly stronger  than \eqref{Eq:Vanishing_Noise} as follows from Jensen's inequality.

\subsection{Error bounds}\label{Sec:EB}
The previous analysis not only guarantees the almost sure convergence of the iterates $(x_n)_{n\in\NN}$
but can also be used to obtain estimates for the fixed-point residuals. Namely, since $I-T$ is 2-Lipschitz,
by considering the sequence $z_n=x_n-\overline U_n$ as before, we have
$$\|x_n-Tx_n\|\leq \|z_n-Tz_n\|+2\|z_n-x_n\|=\|z_n-Tz_n\|+2\|\overline U_n\|.$$
Now, \cref{Thm:First} 
gives conditions under which the $z_n$'s are 
bounded so there exists a finite $\kappa$ such that $\|Tz_n-x_0\|\leq\kappa$ for all $n\geq 1$, and therefore
\eqref{Eq:IKMbound} with $e_k=\overline U_{k-1}$ yields the following  pathwise estimate
\begin{equation}\label{Eq:IKMbound1}
\|x_n-Tx_n\|\leq \kappa\,\sigma(\tau_n)+\sum_{k=2}^n2\alpha_k\,\sigma(\tau_n\!-\!\tau_k)\|\overline{U}_{k-1}\|+4\|\overline{U}_{n}\|.
\end{equation}

Notice that $\overline{U}_{0}=0$ which explains why the term $k=1$ in the sum was omitted.

Unfortunately,  this 
 bound is not explicit since $\kappa$ is random and depends on the full realization of the trajectory. Moreover, the noise $U_n$ may occasionally produce large excursions
and $\|\overline{U}_n\|$ may become very large, so that \eqref{Eq:IKMbound1} does not give a
pathwise error bound with explicit constants. 
However,  \eqref{Eq:IKMbound1} combined with estimates for $\|\overline{U}_n\|$
(see {\em e.g.}  \cref{Ex:asc23}), still provides asymptotic rates that hold almost surely.
 
Hereafter we focus instead on the expected residuals $\EE(\|x_n-Tx_n\|)$, for which we obtain computable {\em non-asymptotic} error bounds.
Markov's inequality then yields explicit 
 error bounds that hold  pathwise with high probability. 
We will assume one of the following  (non-excluding) conditions and choices for the constant $\bar\kappa$
 \begin{equation}\tag{\mbox{$\textsc{h}_4$}}\label{Eq:BoundedKappa}
 \left\{\begin{array}{l}
\mbox{ $(i)$ either $T$ has bounded range and $\bar\kappa\geq\sup_{x\in\RR^d}\|Tx-x_0\|$,}\\[1ex]
\mbox{$(ii)$ or $S\triangleq\sum_{k=2}^\infty\alpha_k\sumprob_{k-1}<\infty$ and $\bar\kappa\geq 2\mathop{\rm dist}(x_0,\fix(T))+\mu\, S$. }
\end{array}\right.
\end{equation}

\begin{theorem}\label{Thm:Rate_Expectation}
Under \eqref{H1} and \eqref{Eq:BoundedKappa} we have
\begin{equation}\label{Eq:RateExpected}
\EE(\|x_n-Tx_n\|)\leq \bar\kappa\,\sigma(\tau_n)+2\normeq  \sum_{k=2}^n\alpha_k\,\sigma(\tau_n\!-\!\tau_k)\,\sumprob_{k-1}+4\normeq \, \sumprob_n.
\end{equation}
\end{theorem}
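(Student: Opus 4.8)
The plan is to take expectations in the pathwise estimate \eqref{Eq:IKMbound1}, after justifying that under \eqref{Eq:BoundedKappa} one may replace the random $\kappa$ there by the deterministic constant $\bar\kappa$. First I would treat the two cases of \eqref{Eq:BoundedKappa} separately. In case $(i)$, $T$ has bounded range, so by the Remark following \cref{Thm:BCP} the bound \eqref{Eq:IKMbound} — and hence \eqref{Eq:IKMbound1} with $e_k=\overline U_{k-1}$ — holds pathwise with the deterministic constant $\bar\kappa\geq\sup_x\|Tx-x_0\|$, no matter how the trajectory behaves. In case $(ii)$, I would invoke the second part of \cref{Thm:BCP}: since $S=\sum_{k\geq2}\alpha_k\sumprob_{k-1}<\infty$, \cref{Lemma:Expectation_estimate} gives $\sum_k\alpha_k\EE(\|\overline U_{k-1}\|)\leq\mu S<\infty$, so $\sum_k\alpha_k\|\overline U_{k-1}\|<\infty$ almost surely and, a fortiori, $\|\overline U_{k-1}\|\to0$ almost surely; applying \cref{Thm:BCP} to the \eqref{Eq:ikm} sequence $z_n$ with errors $e_k=\overline U_{k-1}$ yields that \eqref{Eq:IKMbound} holds pathwise with $\kappa=2\mathop{\rm dist}(x_0,\fix(T))+\sum_{k\geq2}\alpha_k\|\overline U_{k-1}\|\leq\bar\kappa$ on the almost sure event where the series converges. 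Either way, \eqref{Eq:IKMbound1} holds with $\kappa$ replaced by $\bar\kappa$, almost surely.

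Having established the pathwise bound
\[
\|x_n-Tx_n\|\leq \bar\kappa\,\sigma(\tau_n)+\sum_{k=2}^n2\alpha_k\,\sigma(\tau_n\!-\!\tau_k)\|\overline{U}_{k-1}\|+4\|\overline{U}_{n}\|
\]
almost surely, I would simply take expectations on both sides. Since all terms are nonnegative and the sum is finite, monotonicity and linearity of expectation give
\[
\EE(\|x_n-Tx_n\|)\leq \bar\kappa\,\sigma(\tau_n)+2\sum_{k=2}^n\alpha_k\,\sigma(\tau_n\!-\!\tau_k)\,\EE(\|\overline{U}_{k-1}\|)+4\,\EE(\|\overline{U}_{n}\|),
\]
and then \cref{Lemma:Expectation_estimate} — which under \eqref{H1} gives $\EE(\|\overline U_m\|)\leq\mu\,\sumprob_m$ for every $m$ — bounds each $\EE(\|\overline U_{k-1}\|)$ by $\mu\,\sumprob_{k-1}$ and $\EE(\|\overline U_n\|)$ by $\mu\,\sumprob_n$, yielding exactly \eqref{Eq:RateExpected}.

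The only genuinely delicate point is the legitimacy of using the deterministic $\bar\kappa$ in \eqref{Eq:IKMbound1}: in case $(ii)$ the estimate holds only on the almost sure event where $\sum_k\alpha_k\|\overline U_{k-1}\|<\infty$, so one must be slightly careful that taking expectations is still valid. This is not a real obstacle — on the complementary null event the left-hand side contributes nothing to the expectation, and on the good event the right-hand side of \eqref{Eq:IKMbound1} dominates $\|x_n-Tx_n\|$ with $\kappa\leq\bar\kappa$ — but it is the step that needs to be spelled out rather than waved through. Everything else is a routine application of Jensen's inequality (already packaged in \cref{Lemma:Expectation_estimate}) and monotone/linear manipulation of expectations.
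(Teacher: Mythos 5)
Your case $(i)$ and the final expectation step are fine and follow the paper's route, but case $(ii)$ contains a genuine error. You claim that on the almost sure event where the series converges one has
\[
\kappa \;=\; 2\mathop{\rm dist}(x_0,\fix(T))+\sum_{k\geq 2}\alpha_k\|\overline U_{k-1}\| \;\leq\; \bar\kappa ,
\]
so that \eqref{Eq:IKMbound1} holds pathwise with the deterministic constant $\bar\kappa$. This is false: \eqref{Eq:BoundedKappa}$(ii)$ only guarantees $\bar\kappa\geq 2\mathop{\rm dist}(x_0,\fix(T))+\mu S$ with $S=\sum_{k\geq 2}\alpha_k\sumprob_{k-1}$, and by \cref{Lemma:Expectation_estimate} the quantity $\mu S$ dominates only the \emph{expectation} $\EE\bigl(\sum_{k\geq 2}\alpha_k\|\overline U_{k-1}\|\bigr)$, not the random series itself. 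For nondegenerate noise the series exceeds $\mu S$ with positive probability, so the pathwise inequality $\kappa\leq\bar\kappa$ simply does not hold, and your sentence ``Either way, \eqref{Eq:IKMbound1} holds with $\kappa$ replaced by $\bar\kappa$, almost surely'' is wrong in case $(ii)$. The repair is exactly the paper's argument: keep the random $\kappa$ in \eqref{Eq:IKMbound1}, take expectations of the whole inequality (the product $\kappa\,\sigma(\tau_n)$ has $\sigma(\tau_n)$ deterministic, so this is harmless), and only then use $\EE(\kappa)\leq 2\mathop{\rm dist}(x_0,\fix(T))+\mu S\leq\bar\kappa$ via \cref{Lemma:Expectation_estimate}; the remaining terms are handled as you did.

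A secondary, smaller flaw: you deduce ``a fortiori $\|\overline U_{k-1}\|\to 0$ almost surely'' from $\sum_k\alpha_k\|\overline U_{k-1}\|<\infty$. Since $\alpha_k\to 0$ is allowed, summability of $\alpha_k\|\overline U_{k-1}\|$ does not force $\|\overline U_{k-1}\|\to 0$ (take the norms equal to $1$ along a sparse subsequence on which $\sum\alpha_k$ converges). If you want to invoke the second half of \cref{Thm:BCP}, which asks for $\|e_n\|\to 0$, you need a separate argument for this (as in the proof of \cref{Thm:First}), or you should appeal directly to the bound on $\|Tz_n-x_0\|$ from \cite[Proposition 2]{bcp} as the paper implicitly does when it asserts that \eqref{Eq:IKMbound1} holds with the random $\kappa$.
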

\begin{proof}
Under \eqref{Eq:BoundedKappa}\,(i) we have that \eqref{Eq:IKMbound1} holds with $\kappa=\bar\kappa$, and  
\eqref{Eq:RateExpected} follows by taking expectation in \eqref{Eq:IKMbound1}
and then using the estimate $\EE(\|\overline{U}_{k}\|)\leq\mu\,\sumprob_{k}$
in \cref{Lemma:Expectation_estimate}. 
For \eqref{Eq:BoundedKappa}\,(ii), we observe that  \eqref{Eq:IKMbound1} holds with $\kappa=2\mathop{\rm dist}(x_0,\fix(T))+\sum_{k=2}^\infty\alpha_k\|\overline{U}_{k-1}\|$. 
Using  \cref{Lemma:Expectation_estimate} we have $\EE(\kappa)\leq\bar\kappa$,
and then we get  \eqref{Eq:RateExpected} as before.
\end{proof}

In order to have a  more manageable error bound, we note that 
$\sumprob_n$ can  be sometimes estimated as a function of $\alpha_n$
which in turn can be bounded as a function of  $\tau_n$. For instance, if $\alpha_n=g(n)$ with 
$g(\cdot)$ positive and decreasing, then $\tau_n\approx  \int_1^ng(x)\,dx\triangleq G(n)$
and we get $\alpha_n\approx g(G^{-1}(\tau_n))$.
In such cases we may interpret the middle sum in \eqref{Eq:RateExpected}
as a Riemann sum for a certain convolution integral. The following result formalizes 
this general principle, which will be used later to study in detail the 
stepsizes of power form $\alpha_n={1}/{(n+1)^a}$.

\begin{theorem} \label{Thm:convolution_bound}
Assume  \eqref{H1}, \eqref{Eq:BoundedKappa}, and suppose there exists a 
decreasing convex function $h:(0,\infty)\to(0,\infty)$ of class $C^2$, and a constant $\gamma\geq 1$, such that
\begin{equation}\label{eq:convol}
(\forall k\geq 2)\quad\left\{
\begin{array}{cl}
(a)&\sumprob_{k-1}\leq(1\!-\!\alpha_k)\,h(\tau_k),\\[1ex]
(b)&\alpha_k(1\!-\!\alpha_k)\leq\gamma\,\alpha_{k+1}(1\!-\!\alpha_{k+1}).
\end{array}
\right.
\end{equation}
Then,  for all $n\geq 1$ we have 
\begin{equation}\label{Eq:RateExpected2}
\EE(\|x_n-Tx_n\|)\leq B_n\triangleq \frac{\bar{\kappa}}{\sqrt{\pi\tau_n}}+\frac{2 \mu\gamma}{\sqrt{\pi}}\! \int_{\tau_1}^{\tau_n}\!\!\!\frac{h(x)}{\sqrt{\tau_n\!-\!x}}\,dx
+2\mu\, \alpha_n\sumprob_{n-1}+4\mu \,\sumprob_n.
\end{equation}
\end{theorem}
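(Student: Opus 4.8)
The plan is to start from the expected-residual bound \eqref{Eq:RateExpected} in \cref{Thm:Rate_Expectation} and control each of its three terms under the extra hypotheses \eqref{eq:convol}. The first term $\bar\kappa\,\sigma(\tau_n)$ is immediate: since $\sigma(y)=\min\{1,1/\sqrt{\pi y}\}\leq 1/\sqrt{\pi y}$, it is at most $\bar\kappa/\sqrt{\pi\tau_n}$, which is exactly the first term of $B_n$. The last two terms of \eqref{Eq:RateExpected}, namely $4\mu\,\sumprob_n$, reappear verbatim in $B_n$ (together with the extra summand $2\mu\,\alpha_n\sumprob_{n-1}$, which I will generate as a by-product below), so the whole difficulty is concentrated in the middle sum $2\mu\sum_{k=2}^n\alpha_k\,\sigma(\tau_n-\tau_k)\,\sumprob_{k-1}$, which must be dominated by the convolution integral $\frac{2\mu\gamma}{\sqrt\pi}\int_{\tau_1}^{\tau_n}h(x)/\sqrt{\tau_n-x}\,dx$ plus the leftover term $2\mu\,\alpha_n\sumprob_{n-1}$.

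The key step is therefore the following Riemann-sum estimate. Using \eqref{eq:convol}(a) to replace $\sumprob_{k-1}$ by $(1-\alpha_k)h(\tau_k)$, and $\sigma(\tau_n-\tau_k)\leq 1/\sqrt{\pi(\tau_n-\tau_k)}$, each summand for $2\leq k\leq n-1$ is at most $\alpha_k(1-\alpha_k)\,h(\tau_k)/\sqrt{\pi(\tau_n-\tau_k)}$. Since $\tau_k-\tau_{k-1}=\alpha_k(1-\alpha_k)$, the natural move is to compare this with $\int_{\tau_{k-1}}^{\tau_k}h(x)/\sqrt{\pi(\tau_n-x)}\,dx$. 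On the interval $[\tau_{k-1},\tau_k]$ the function $x\mapsto 1/\sqrt{\tau_n-x}$ is increasing, so its value at the right endpoint $\tau_k$ dominates; but $h$ is decreasing, so I must be careful about which endpoint of $h$ to use. Here \eqref{eq:convol}(b) enters: it lets me absorb the mismatch of step lengths — the summand uses the increment $\tau_k-\tau_{k-1}$ while I would like to integrate over $[\tau_k,\tau_{k+1}]$ (where $h$ is controlled by its left endpoint value $h(\tau_k)$, matching the summand) — at the cost of the factor $\gamma$, because $\tau_k-\tau_{k-1}=\alpha_k(1-\alpha_k)\leq\gamma\,\alpha_{k+1}(1-\alpha_{k+1})=\gamma(\tau_{k+1}-\tau_k)$. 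Thus $\alpha_k(1-\alpha_k)h(\tau_k)\leq\gamma\int_{\tau_k}^{\tau_{k+1}}h(x)\,dx$ (using $h$ decreasing), and combining with the monotonicity of $1/\sqrt{\tau_n-x}$ on each such subinterval, the sum over $k=2,\dots,n-1$ telescopes into $\gamma\int_{\tau_2}^{\tau_n}h(x)/\sqrt{\pi(\tau_n-x)}\,dx\leq\gamma\int_{\tau_1}^{\tau_n}h(x)/\sqrt{\pi(\tau_n-x)}\,dx$. The single term $k=n$ in the middle sum, where $\sigma(\tau_n-\tau_n)=\sigma(0)=1$, cannot be put under the integral and is precisely what produces the leftover $2\mu\,\alpha_n\,\sumprob_{n-1}$ in $B_n$ (using $\sigma(0)=1$ and $1-\alpha_n\leq 1$).

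The $C^2$ and convexity hypotheses on $h$ are presumably used to make the comparison between the sum and the integral fully rigorous — e.g. convexity gives clean one-sided bounds of $\int_{\tau_k}^{\tau_{k+1}}h$ by endpoint values without error terms, or controls the second-order remainder in a Taylor/Euler–Maclaurin-type argument — so in the write-up I would invoke convexity of $h$ to justify $h(\tau_k)(\tau_{k+1}-\tau_k)\leq\int_{\tau_k}^{\tau_{k+1}}h(x)\,dx$ is false in the wrong direction; rather, since $h$ is decreasing, $h(\tau_k)\geq h(x)$ for $x\geq\tau_k$, giving $h(\tau_k)(\tau_{k+1}-\tau_k)\geq\int_{\tau_k}^{\tau_{k+1}}h(x)\,dx$, which is the inequality I actually need. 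Collecting the three pieces — $\bar\kappa/\sqrt{\pi\tau_n}$ from the first term, the convolution integral times $2\mu\gamma/\sqrt\pi$ plus $2\mu\,\alpha_n\sumprob_{n-1}$ from the middle sum, and $4\mu\,\sumprob_n$ from the last term — yields exactly $B_n$. The main obstacle is the bookkeeping in the Riemann-sum step: keeping track of whether to evaluate $h$ and $1/\sqrt{\tau_n-x}$ at left or right endpoints so that both the monotonicity directions and the step-length shift (controlled by $\gamma$ via \eqref{eq:convol}(b)) line up, and isolating the non-integrable $k=n$ boundary term; once that alignment is pinned down, the rest is routine.
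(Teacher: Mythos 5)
Your overall decomposition is the right one (start from \eqref{Eq:RateExpected}, bound $\sigma(\tau_n)\leq 1/\sqrt{\pi\tau_n}$, peel off the $k=n$ term with $\sigma(0)=1$ to get $2\mu\,\alpha_n\sumprob_{n-1}$, keep $4\mu\sumprob_n$), but the core Riemann-sum step is broken. Your key claim, $\alpha_k(1-\alpha_k)\,h(\tau_k)\leq\gamma\int_{\tau_k}^{\tau_{k+1}}h(x)\,dx$ ``using $h$ decreasing,'' goes the wrong way: for decreasing $h$ the left-endpoint value over-estimates the average, i.e.\ $h(\tau_k)(\tau_{k+1}-\tau_k)\geq\int_{\tau_k}^{\tau_{k+1}}h(x)\,dx$, so the step-shift $\tau_k-\tau_{k-1}\leq\gamma(\tau_{k+1}-\tau_k)$ cannot be converted into the bound you want this way. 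Your closing paragraph then declares that this reverse inequality ($\geq$) is ``the inequality I actually need,'' which is backwards: the whole point is to dominate the \emph{sum} by the \emph{integral}, so every summand must be bounded \emph{above} by an integral piece. As written, the middle term of $B_n$ is never actually established.

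The missing idea is that the comparison must be carried out for the full integrand $f(x)=h(x)/\sqrt{\tau_n-x}$, which is \emph{not} monotone: it decreases where $h$ dominates and increases near $\tau_n$ where the singular factor takes over. Where $f$ decreases one must compare $f(\tau_k)(\tau_k-\tau_{k-1})$ with the integral over the \emph{left} interval $[\tau_{k-1},\tau_k]$ (no $\gamma$ needed); where $f$ increases one must use the \emph{right} interval $[\tau_k,\tau_{k+1}]$, and only there does \eqref{eq:convol}(b) enter, to trade $\tau_k-\tau_{k-1}$ for $\gamma(\tau_{k+1}-\tau_k)$. A single uniform choice of interval, as in your scheme, fails in one of the two regimes. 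This is exactly where the $C^2$, decreasing and convexity hypotheses on $h$ are used — not as a technical regularization, as you suggest, but to prove that $f$ is unimodal: the numerator of $f'$, $\psi(x)=h(x)+2(\tau_n-x)h'(x)$, satisfies $\psi'(x)=-h'(x)+2(\tau_n-x)h''(x)\geq0$, so $f$ decreases on $[\tau_1,y_n]$ and increases on $[y_n,\tau_n]$ for a single crossover point $y_n$; splitting the sum at the last index with $\tau_k\leq y_n$ and applying the left-interval bound before the split and the right-interval bound (with the factor $\gamma$) after it stitches the two pieces into $\gamma\int_{\tau_1}^{\tau_n}f(x)\,dx$, with the constant $\gamma$ (and not, say, $1+\gamma$ from double-counting). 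Without this unimodality argument and the two-sided interval bookkeeping, the proof does not go through.
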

\begin{proof}  
Consider the middle sum in \eqref{Eq:RateExpected}.  Since $\sigma(0)=1$ the $n$-th
term in the sum is $2 \normeq \,\alpha_n\sumprob_{n-1}$. Separating this last term,
and bounding the other ones using the inequalities $\alpha_k\sumprob_{k-1}\leq \alpha_k(1\!-\!\alpha_k)h(\tau_k)=h(\tau_k)(\tau_k\!-\!\tau_{k-1})$ 
and $\sigma(\tau_n\!-\!x)\leq{1}/{\sqrt{\pi(\tau_n\!-\!x)}}$, it suffices to show that
\begin{equation}\label{Eq:tbp}
 \sum_{k=2}^{n-1}\frac{h(\tau_k)}{\sqrt{\tau_n-\tau_k}}(\tau_k\!-\!\tau_{k-1})\leq \gamma \int_{\tau_1}^{\tau_n}\!\!\frac{h(x)}{\sqrt{\tau_n-x}}\,dx.
 \end{equation}

Let $f(x)=h(x)/\sqrt{\tau_n-x}$. We claim that there exists $y_n\in[\tau_1,\tau_n)$ such that $f$ decreases in the interval 
$[\tau_1,y_n]$ and increases in $[y_n,\tau_n]$. Indeed, the derivative is
$$f'(x)=\frac{h (x)+2 (\tau _n-x) h '(x)}{2 (\tau _n-x){}^{3/2}}$$
whose numerator $\psi(x)=h (x)+2 (\tau _n-x) h '(x)$ is non-decreasing
as one can check by computing $\psi'(x)=-h '(x)+2 (\tau _n-x) h ''(x)\geq 0$.
Since $\psi(\tau_n)=h (\tau_n)>0$, we either have $\psi(\tau_1)<0$ and there exists some 
$y_n\in (\tau_1,\tau_n)$ at which $\psi(\cdot)$, and thus also $f'$, changes from negative to positive,
or $\psi(\tau_1)\geq 0$ so that $f'(x)\geq 0$ for all $x\in [\tau_1,\tau_n]$ and we may just take $y_n=\tau_1$.

Let $k_n$ be the largest $k$ such that $\tau_k\leq y_n$.
Since $f(\cdot)$ decreases on $[\tau_1,y_n]$,
we can bound the partial sum up to $k_n$ on the left of \eqref{Eq:tbp} by the integral, namely
$$\sum_{k=2}^{k_n}f(\tau_k)(\tau_k-\tau_{k-1})\leq \int_{\tau_1}^{\tau_{k_n}}\!\!f(x)\,dx\leq  \gamma \int_{\tau_1}^{\tau_{k_n}}\!\!f(x)\,dx.$$
For the rest of the sum we note \eqref{eq:convol}\,(b) gives $(\tau_k\!-\!\tau_{k-1})\leq \gamma\,(\tau_{k+1}\!-\!\tau_{k})$ and since
 $f$ increases in $[y_n,\tau_n]$ we can again bound the sum by an integral, that is
 $$ \sum_{k=k_n\!+\!1}^{n-1}\!\!\!f\!(\tau_k)(\tau_k\!-\!\tau_{k-1})\leq  \gamma \sum_{k=k_n\!+\!1}^{n-1}\!\!\!\!f(\tau_k)(\tau_{k+1}\!-\!\tau_{k})\leq\gamma \int_{\tau_{k_n\!+\!1}}^{\tau_n}\!\!\!f(x)\,dx\leq\gamma \int_{\tau_{k_n}}^{\tau_n}\!\!\!f(x)\,dx.$$
Hence, \eqref{Eq:tbp} follows by summing both inequalities.
 \end{proof}

\vspace{2ex}
Markov's inequality readily gives error bounds that hold with high probability. 
\begin{corollary}\label{Cor:error_bound_high_prob} Under the assumptions of  \cref{Thm:convolution_bound}, for each 
$p\in (0,1)$ we have that the error bound $\|x_n-Tx_n\|\leq {B_n}/{p}$ holds with probability 
at least $1-p$.
\end{corollary}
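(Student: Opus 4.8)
The plan is to apply Markov's inequality to the nonnegative random variable $Y_n\triangleq\|x_n-Tx_n\|$, using the expectation bound already available from \cref{Thm:convolution_bound}. First I would note that the hypotheses of \cref{Cor:error_bound_high_prob} are exactly those of \cref{Thm:convolution_bound}, so that theorem applies verbatim and yields $\EE(Y_n)\leq B_n$ for every $n\geq 1$, with $B_n$ the explicit quantity defined in \eqref{Eq:RateExpected2}. In particular $B_n>0$ (each of its four summands is nonnegative and the first, $\bar\kappa/\sqrt{\pi\tau_n}$, is positive since $\tau_n<\infty$ under \eqref{H0}), so dividing by $B_n$ is legitimate.

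Next I would fix $p\in(0,1)$ and invoke Markov's inequality for $Y_n\geq 0$ at level $B_n/p$:
\[
\PP\!\left(Y_n> \frac{B_n}{p}\right)\leq \frac{\EE(Y_n)}{B_n/p}=\frac{p\,\EE(Y_n)}{B_n}\leq p,
\]
where the last step uses $\EE(Y_n)\leq B_n$. Taking complements gives $\PP\big(\|x_n-Tx_n\|\leq B_n/p\big)\geq 1-p$, which is the claimed statement; replacing the strict inequality by a non-strict one in the event only enlarges it, so the conclusion holds as stated.

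There is essentially no obstacle here: the content of the corollary lies entirely in the non-asymptotic expectation bound of \cref{Thm:convolution_bound}, and the passage to a high-probability statement is the standard one-line Markov argument. The only minor point worth checking is the positivity of $B_n$ so that the threshold $B_n/p$ is well defined, which is immediate. If one wished to be slightly more careful, one could also remark that $Y_n$ is a genuine random variable (measurable), which is clear since $x_n$ is $\mathcal F_n$-measurable by \eqref{Eq:skm} and $T$ is continuous, hence $x_n\mapsto\|x_n-Tx_n\|$ is measurable as well.
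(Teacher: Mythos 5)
Your argument is exactly the paper's: apply Markov's inequality to $\|x_n-Tx_n\|$ at the threshold $B_n/p$ and use the expectation bound $\EE(\|x_n-Tx_n\|)\leq B_n$ from \cref{Thm:convolution_bound}, then take complements. The proof is correct, and the extra remarks on positivity of $B_n$ and measurability are harmless additions to the same one-line argument.
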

\begin{proof} This follows directly from the estimate in  \cref{Thm:convolution_bound}
by taking $\epsilon=B_n/p$ in Markov's inequality  $\PP(\|x_n-Tx_n\|>\epsilon)\leq \EE(\|x_n-Tx_n\|)/\epsilon\leq B_n/\epsilon$.
\end{proof}

\section{An application to  \texorpdfstring{$Q$-learning}{Lg} for average reward MDPs}\label{sec:RVI-Q}
In this section we illustrate the previous results by an application to 
average reward Markov decision processes. As pointed out in \cite[Section 10.3]{SB2018}
and \cite{WNS2021},
this is arguably the most relevant setting for reinforcement learning, yet the results are far less
complete compared to the case of discounted rewards. Up to our knowledge, our results establish 
the first rate of convergence for the RVI-$Q$-learning iteration. 
We begin by recalling the general framework and known facts, which can be found in the classical reference \cite[Puterman]{Put2005}.
This book also discusses a large variety of applications.

Consider a controlled Markov chain with  state space $\SS$, action space
$\AA$, and transition probabilities $p(\cdot|i,u)\in\Delta(\SS)$, where $\SS$ and $\AA$  are finite sets and $p(j|i,u)$ is the probability 
 of moving to state $j\in\SS$ when we choose action $u\in\AA$ at state $i\in\SS$.
Let $g(i,u,j)\in\RR$ be a reward when moving from state $i$ to $j$ under action $u$.

Given an initial state $i_0\in\SS$ and a sequence of actions $(u_t)_{t\geq 1}$, the
chain evolves at random according to $i_{t+1}\sim p(\cdot|i_t,u_t)$. We look for
a stationary policy $\mu:\SS\to\AA$ such that the control $u_t=\mu(i_t)$ maximizes the 
long term expected reward 
\begin{equation}\label{eq:reward}
R_\mu(i_0)=\liminf_{T\to\infty}~\EE\left[\frac{1}{T}\sum_{t=0}^{T-1}g(i_t,u_t,i_{t+1})\right].
\end{equation}

By adding a constant to the rewards, we may assume that 
$0\leq g(i,u,j)\leq \overline{g}$ for some $\overline{g}\in\RR$, and therefore 
$0\leq R_\mu(i_0)\leq\overline{g}$. 
This translation does not
affect the optimal policies. Under the following standard ``unichain'' assumption
\begin{equation*}\tag{\mbox{\sc uni}}\label{Eq:Qlearn}
 \left\{\begin{array}{l}
\mbox{for every stationary policy $\mu$ the induced Markov chain  with 
 transition}\\
\mbox{probabilities $P_{i,j}=p(j|i,\mu(i))$ has a single recurrent class,}
\end{array}\right.
\end{equation*}
the $\liminf$ in \eqref{eq:reward} is attained as a limit and  $R_\mu(i_0)$
does not depend on the state $i_0$. Moreover,
the optimal value $\optv=\max_{\mu}R_\mu(i_0)$ satisfies $0\leq \optv\leq \overline{g}$ 
and is attained for some optimal stationary policy $\bar\mu$,
which can be found by solving the following system of Bellman equations
\begin{equation}\label{eq:Bell}
(\forall i\in\SS)\qquad V(i)=\max_{u\in\AA}\left[\sum_{j\in\SS}p(j|i,u)\big(g(i,u,j)+V(j)\big)-\optv\right],
\end{equation}
whose solution $V\in\RR^\SS$ is unique up to an additive constant.
Denoting  by $Q(i,u)$ the expression within square brackets, 
we have $V(i)=\max_{u\in\AA}Q(i,u)$, and an optimal policy is then obtained by setting $\bar\mu(i)=u$ where the action $u\in\AA$ maximizes $Q(i,\cdot)$.
We refer to \cite[Section 8.4]{Put2005} for details.

The system \eqref{eq:Bell}  can be expressed directly in terms of the $Q$-factors as
\begin{equation}\label{eq:Bell2}
(\forall (i,u)\in\SS\times\AA)\qquad Q(i,u)=\sum_{j\in\SS}p(j|i,u)\big (g(i,u,j)+\max_{u'\in\AA}Q(j,u')\big )-\optv.
\end{equation}
Let $d=|\SS\!\!\times\!\!\AA|$ and consider the map $M:\RR^d\to\RR^d$ given by
$$M(Q)(i,u)=\sum_{j\in\SS}p(j|i,u)\big(g(i,u,j)+\max_{u'\in\AA}Q(j,u')\big).$$
Denoting $e\in\RR^d$ the all-ones vector with $e(i,a)=1$ for all $(i,a)$, 
then \eqref{eq:Bell2} reduces to the fixed point equation $Q=H(Q)$ for the map $H(Q)=M(Q)-\optv\, e$,
whose solution is again unique up to an additive constant.

Note that $H$ is nonexpansive for $\|\cdot\|_\infty$.
However, in general the optimal value $\optv$ is 
unknown so one cannot directly implement  a fixed point iteration. Moreover, 
in many situations the transition probabilities are not known and one only has access to
samples from the distributions $p(\cdot|i,u)$. The 
 {\em RVI-$Q$-learning iteration} studied in
 \cite{abb1} 
addresses both issues by fixing a continuous function $f:\RR^d\to\RR$ such that $f(Q+c\,e)=f(Q)+c$
for all $c\in\RR$ and then, starting from an initial guess $Q_0\in\RR^d$, computes the recursive updates
\begin{equation}\label{eq:Q-RVI}\tag{\mbox{\sc rvi-q}}
Q_{n}(i,u)\!=\!(1\!-\!\alpha_{n})Q_{n\!-\!1}(i,u)\!+\!\alpha_n\big[g(i,u,\xi^{n}_{iu})\!+\!\max_{u'\in\AA}Q_{n\!-\!1}(\xi^{n}_{iu},u')\!-\!f(Q_{n\!-\!1})\big].
\end{equation}
with independent samples $\xi_{iu}^{n}\sim p(\cdot|i,u)$ for each pair $(i,u)\in\SS\times\AA$. 
Simple examples for $f$ are the max function $f(Q)=\max_{i,u}Q(i,u)$, the average $f(Q)=\frac{1}{d}\sum_{i,u}Q(i,u)$, and $f(Q)=Q(i_0,u_0)$ for a fixed $(i_0,u_0) \in \SS \times \AA$.

There are two main difficulties to analyze the iteration \eqref{eq:Q-RVI}. Firstly, while 
$H$ is nonexpansive, this property is lost when we introduce the stabilizer $f(Q)$.
To deal with this issue we will study an hypothetical iteration where instead of $f(Q_{n-1})$ we 
consider the unknown value $\optv$: convergence of \eqref{eq:Q-RVI} will  follow from
the convergence of the latter. The second difficulty comes from the fact that the variances $\Var_n^2$ 
might increase with $n$. However, we will show that they grow in a 
controlled manner which can still be handled with our general results. For clarity we address both
issues separately. We first establish a general consequence of \cref{Thm:First,Thm:convolution_bound}
with possibly increasing variances, and  later we show how it applies to \eqref{eq:Q-RVI}.

\begin{theorem}\label{Thm:RVI-Qg} 
Consider the iteration \eqref{Eq:skm} for $\alpha_n=1/(n+1)^a$ with  $\frac{4}{5}<a\leq 1$.
Assume \eqref{H1} and suppose in addition that  there exist constants $c$ and $\eta$ such that
\begin{equation}\label{eq:vargrowth}
(\forall n\geq 1)\quad\left\{
\begin{array}{cl}
(a)&\|x_n\|\leq\|x_{n-1}\|+\eta\,\alpha_n\mbox{ almost surely, and}\\[1ex]
(b)&\EE(\|U_n\|^2)\leq c(1+\EE(\|x_{n-1}\|^2)).
\end{array}
\right.
\end{equation}
Then,  the sequence $(x_n)_{n\in\NN}$ generated by \eqref{Eq:skm} converges almost surely to some fixed point $x^*\in\fix(T)$,
and there exists a constant $\kappa_a$ such that
\begin{equation}\label{eq:Q-RVI-Errorg}
\EE(\|x_n-Tx_n\|)\leq\frac{\kappa_a}{\sqrt{\tau_n}}\sim
\left\{\begin{array}{cl}
    O(1/\sqrt{n^{1-a}})&\mbox{if $\frac{4}{5}< a<1$},\\
O(1/\sqrt{\log n})&\mbox{if $a=1$.}
\end{array}\right.
\end{equation}
\end{theorem}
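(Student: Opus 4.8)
The plan is to verify the hypotheses of \cref{Thm:First} and \cref{Thm:convolution_bound}, from which both conclusions will follow directly. The first task is to turn the recursive bound \eqref{eq:vargrowth}(a) into polynomial control of the variances. Iterating \eqref{eq:vargrowth}(a) gives $\|x_n\|\leq\|x_0\|+\eta\sum_{k=1}^n\alpha_k$ almost surely, and since $\alpha_k=1/(k+1)^a$ one has $\sum_{k=1}^n\alpha_k\leq C_a\,\phi(n)$ with $\phi(n)=n^{1-a}$ when $a<1$ and $\phi(n)=1+\ln n$ when $a=1$; moreover $\tau_n\sim\phi(n)$ because $\sum_k\alpha_k^2<\infty$. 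Feeding the pathwise bound on $\|x_{n-1}\|$ into \eqref{eq:vargrowth}(b) (and using equivalence of norms to pass between $\|\cdot\|$ and $\|\cdot\|_2$) then yields $\Var_n\leq C(1+\phi(n))$.

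Next I would estimate $\sumprob_n=\sqrt{\sum_{k=1}^n(\pi_k^n)^2\Var_k^2}$. The weights $(\pi_k^n)^2$ are concentrated on indices $k$ within $O(1/\alpha_n)$ below $n$, a window over which $\Var_k$ is of the same order as $\Var_n$, so a variant of \cref{Lemma:L3} (see \cref{appendix1}) gives $\sumprob_{n-1}\leq C'(1+\phi(n))\sqrt{\alpha_n}$. From this one reads off the two parts of \eqref{H2}: up to logarithmic factors $\alpha_k^2\Var_k^2=O(k^{2-4a})$, which is summable since $a>\tfrac34$, and $\alpha_k\sumprob_{k-1}=O(k^{1-5a/2})$, which is summable precisely because $a>\tfrac45$. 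Together with the obvious \eqref{H0} and the hypothesis \eqref{H1}, \cref{Thm:First} yields the almost sure convergence $x_n\to x^*\in\fix(T)$. The same summability gives $S=\sum_{k\geq2}\alpha_k\sumprob_{k-1}<\infty$, so \eqref{Eq:BoundedKappa}(ii) holds with a finite deterministic constant $\bar\kappa$.

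For the residual bound I would invoke \cref{Thm:convolution_bound}. Its condition \eqref{eq:convol}(b) holds with $\gamma=2$ because $\alpha_k/\alpha_{k+1}\leq2^a\leq2$ and $1-\alpha_k\leq1-\alpha_{k+1}$ for $k\geq2$; for \eqref{eq:convol}(a) one must produce a decreasing convex $C^2$ function $h$ with $\sumprob_{k-1}\leq(1-\alpha_k)h(\tau_k)$. Rewriting the bound on $\sumprob_{k-1}$ in terms of $\tau_k\sim\phi(k)$ gives, for $a<1$, $\sumprob_{k-1}\leq C\,\tau_k^{\rho}$ with $\rho=\frac{2-3a}{2(1-a)}<-1$, so one may take $h(x)=Cx^{\rho}$ (enlarging $C$ to cover small indices); for $a=1$ one gets $\sumprob_{k-1}\lesssim\tau_k e^{-\tau_k/2}\lesssim e^{-\tau_k/4}$, so $h(x)=Ce^{-x/4}$ works. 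In either case the convolution integral in \eqref{Eq:RateExpected2} is $O(1/\sqrt{\tau_n})$: splitting the domain at $\tau_n/2$, on $[\tau_1,\tau_n/2]$ one bounds $(\tau_n-x)^{-1/2}\leq\sqrt{2/\tau_n}$ and uses $\int_{\tau_1}^{\infty}h(x)\,dx<\infty$, while on $[\tau_n/2,\tau_n]$ one bounds $h(x)\leq h(\tau_n/2)=O(\tau_n^{-1/2})$ (or exponentially small) and uses $\int_{\tau_n/2}^{\tau_n}(\tau_n-x)^{-1/2}dx=O(\sqrt{\tau_n})$. The remaining terms $2\mu\,\alpha_n\sumprob_{n-1}=O(n^{1-5a/2})$ and $4\mu\,\sumprob_n=O(n^{1-3a/2})$ of $B_n$ (again up to logarithms) are both $O(\tau_n^{-1/2})\sim n^{-(1-a)/2}$ thanks to $a>\tfrac34$. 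Hence $\EE(\|x_n-Tx_n\|)\leq B_n\leq\kappa_a/\sqrt{\tau_n}$ for some constant $\kappa_a$ (which deteriorates as $a\downarrow\tfrac45$), and substituting $\tau_n\sim n^{1-a}$ for $a<1$ and $\tau_n\sim\ln n$ for $a=1$ yields \eqref{eq:Q-RVI-Errorg}.

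The only genuinely delicate steps are the sharp estimate $\sumprob_{n-1}\leq C(1+\phi(n))\sqrt{\alpha_n}$ --- which hinges on controlling how the growing variances interact with the concentration of the kernel $(\pi_k^n)^2$ --- and the choice of $h$ together with the convolution estimate, where the threshold $a>\tfrac45$ is exactly what makes the effective exponent $\rho<-1$, so that $h$ is integrable at the origin and the middle term of $B_n$ does not overwhelm $1/\sqrt{\tau_n}$. Everything else is routine bookkeeping with powers of $n$.
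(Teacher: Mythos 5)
Your argument is correct and follows essentially the same route as the paper: iterate \eqref{eq:vargrowth}(a) to get $\|x_n\|=O(\tau_n)$, deduce $\Var_n=O(\tau_n)$ and $\sumprob_{n-1}=O(\tau_n\sqrt{\alpha_n})$ via \cref{Lemma:L3}, check \eqref{H2} and \eqref{Eq:BoundedKappa}(ii) (the series $\sum_k\alpha_k\sumprob_{k-1}$ being exactly where $a>\frac{4}{5}$ enters), invoke \cref{Thm:First} for almost sure convergence, and then apply \cref{Thm:convolution_bound} with a convex decreasing majorant $h$ and a split of the convolution integral, just as in the paper (your $h(x)=Cx^{\rho}$, globally convex and decreasing, is if anything a slightly cleaner choice than the paper's $\omega x(1+x)^{-b}$, which must be modified near the origin). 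Two small slips worth fixing: in the upper range you write $h(\tau_n/2)=O(\tau_n^{-1/2})$, which together with $\int_{\tau_n/2}^{\tau_n}(\tau_n-x)^{-1/2}dx=O(\sqrt{\tau_n})$ only yields $O(1)$ --- you should use the sharper $h(\tau_n/2)=O(\tau_n^{\rho})$ with $\rho<-1$, which your own choice of $h$ provides and which gives the required $O(1/\sqrt{\tau_n})$; and in the closing paragraph $\rho<-1$ makes $h$ integrable at infinity (on $[\tau_1,\infty)$), not ``at the origin.''
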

\begin{proof} The proof combines \cref{Thm:First} and \cref{Thm:convolution_bound}.

\vspace{1ex}
\noindent\underline{\em Convergence}.
 Since \eqref{H0} and \eqref{H1} are clearly satisfied, in order to apply \cref{Thm:First}
 it suffices to check  \eqref{H2}.
From \eqref{eq:vargrowth}(a) we get $\|x_n\|\leq\|x_0\|+\eta\sum_{k=1}^{n}\alpha_k\leq\tilde\eta\,\tau_n$ for
some constant $\tilde\eta$. Then, using  \eqref{eq:vargrowth}(b)
we can find another constant $\zeta$ such that
$$\theta_n^2=\EE(\|U_n\|_2^2)\leq\zeta^2\tau_{n-1}^2<\infty.$$ 
Since $\tau_n$ is increasing, from \cref{Lemma:L3} in \cref{appendix1} we get
$$\sumprob_n= \sqrt{\mbox{$\sum \nolimits_{k=1}^n(\pi_k^n)^2\Var_k^2$}}\leq \zeta\tau_{n-1}\sqrt{\mbox{$\sum_{k=1}^n(\pi_k^n)^2$}}\leq \zeta\tau_n\sqrt{\alpha_{n+1}},$$
and therefore
\begin{equation} \label{H2-RVI-betag}
\begin{cases}
(a)&\sum_{k=1}^\infty \alpha_{k}\sumprob_{k-1}\leq S_1\triangleq \zeta\sum_{k=1}^\infty \alpha_{k}^{3/2}\tau_{k-1},\\[1ex]
(b)&\sum_{k=1}^\infty \alpha_k^2 \Var_k^2\leq S_2\triangleq \zeta^2\sum_{k=1}^\infty \alpha_k^2\tau_{k-1}^2.
\end{cases}
\end{equation}
When $a<1$ we have $\tau_{k-1}\sim k^{1-a}$ while  for $a=1$ we get $\tau_{k-1}\sim \ln k$.
It follows that for $\frac{4}{5}<a\leq 1$ both series $S_1$ and $S_2$ in \eqref{H2-RVI-betag} converge, and \eqref{H2} holds.
Invoking \cref{Thm:First} we conclude that $x_n$ converges almost surely to some $x^*\in\fix(T)$.

\vspace{1ex}
\noindent\underline{\em Error bound}. We next show that $\EE(\|x_n-Tx_n\|)\leq B_n\sim O(1/\sqrt{\tau_n})$ using \cref{Thm:convolution_bound}. 
Note that the assumptions are satisfied, including
\eqref{Eq:BoundedKappa}(ii) which holds for $\bar\kappa = 2\|x_0-x^*\|+\mu\, S_1$ with $S_1$  as in \eqref{H2-RVI-betag}(a),
and with the finite constant
 \begin{equation}\label{eq:gamma_ag}
 \gamma_a=\mbox{$\sup_{k\geq 2}\frac{\alpha_k(1-\alpha_k)}{\alpha_{k+1}(1-\alpha_{k+1})}~=~\sup_{k\geq 2}(\frac{k+2}{k+1})^{2a}\frac{(k+1)^a-1}{(k+2)^a-1}$}
\end{equation}
which is slightly larger than 1 and is maximal for $a=1$ so that $\gamma_a\leq \gamma_1=\frac{32}{27}$.

From $\sumprob_{n}\leq\zeta \tau_{n}\sqrt{\alpha_{n+1}}$ it follows that
the two last terms $2\alpha_n\sumprob_{n-1}+4\sumprob_n$ in $B_n$ have an asymptotic order
$o(1/\sqrt{\tau_n})$ and are dominated by the first term $\bar\kappa/\sqrt{\tau_n}$.
For the convolution integral in $B_n$ we need $h(\cdot)$ such that $\sumprob_{k-1}\leq(1-\alpha_k)h(\tau_k)$.
Combining  the bound $\sumprob_{k-1}\leq\zeta \tau_{k}\sqrt{\alpha_k}$ with Lemmas \ref{Lemma:kappa} and 
\ref{Lemma:alfa1}, we may find a constant $\omega$ such that this holds for $h(x)=\omega\,x\,(1+x)^{-b}$ 
with $b={a}/{2 (1\!-\!a)}$ if $a<1$, and for $h(x)=\omega\, x\,e^{-x/2}$ when $a=1$.
These functions are not  convex and decreasing as required by \cref{Thm:convolution_bound}, 
but these properties hold in some interval $[x_a,\infty)$. By suitably modifying
$h(x)$ on $[0,x_a]$ we can ensure that these conditions are met everywhere. 
Splitting the convolution integral, one can easily check that the integral over $[0,x_a]$ is 
of order $O(1/\sqrt{\tau_n})$, while fixing $\rho\in (0,1)$ and noting that $b>2$,
the integral over $[x_a,\tau_n]$ can be estimated as 
\begin{eqnarray*}
\int_{x_a}^{\tau_n}\!\!\!\frac{h(x)}{\sqrt{\tau_n-x}}\,dx&\leq&\int_{x_a}^{\rho\tau_n}\!\!\!\frac{h(x)}{\sqrt{\tau_n-x}}\,dx+\int_{\rho\tau_n}^{\tau_n}\!\!\frac{h(x)}{\sqrt{\tau_n-x}}\,dx\\
&\leq&\frac{1}{\sqrt{(1\!-\!\rho)\tau_n}}\int_{x_a}^{\infty}\!\!\!h(x)\,dx+\int_{\rho\tau_n}^{\tau_n}\frac{h(\rho\tau_n)}{\sqrt{\tau_n-x}}\,dx
\end{eqnarray*}
which is again of order $O(1/\sqrt{\tau_n})$. Thus, all four terms in $B_n$ are of order $O(1/\sqrt{\tau_n})$
and we may find some constant $\kappa_a$ satisfying \eqref{eq:Q-RVI-Errorg}.
\end{proof}

We now apply \cref{Thm:RVI-Qg} to study the RVI-$Q$-learning iteration.
\begin{theorem}\label{Thm:RVI-Q2} Assume \eqref{Eq:Qlearn} and let $\alpha_n=1/(n+1)^a$ with  $\frac{4}{5}<a\leq 1$. 
Then, almost surely we have  $f(Q_n)\to\optv$ and $Q_n\to Q^*\in\fix(H)$ where $Q^*$ is the unique fixed point satisfying  
$f(Q^*)=\optv$. Moreover, there exists a constant $\kappa_a$ such that
\begin{equation}\label{eq:Q-RVI-Error2}
\EE(\|Q_n-H(Q_n)\|_{\infty})\leq\frac{\kappa_a}{\sqrt{\tau_n}}\sim
\left\{\begin{array}{cl}
    O(1/\sqrt{n^{1-a}})&\mbox{if $\frac{4}{5}< a<1$},\\
O(1/\sqrt{\log n})&\mbox{if $a=1$.}
\end{array}\right.
\end{equation}
\end{theorem}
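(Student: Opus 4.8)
The plan is to reduce the statement about the genuine \eqref{eq:Q-RVI} iteration to an application of \cref{Thm:RVI-Qg}, by recasting the recursion as an instance of \eqref{Eq:skm} driven by a nonexpansive map, and then checking the two growth hypotheses \eqref{eq:vargrowth}(a)--(b). Concretely, I would first rewrite \eqref{eq:Q-RVI} using the ``hypothetical'' reference iteration in which the stabilizer $f(Q_{n-1})$ is replaced by the true optimal gain $\optv$. Introducing the nonexpansive map $H(Q)=M(Q)-\optv\,e$ (which, as noted, is nonexpansive for $\|\cdot\|_\infty$ and has a nonempty fixed-point set, unique up to an additive multiple of $e$), the update becomes
\begin{equation*}
Q_n=(1\!-\!\alpha_n)Q_{n-1}+\alpha_n\big(H(Q_{n-1})+U_n-[f(Q_{n-1})-\optv]\,e\big),
\end{equation*}
where $U_n(i,u)=g(i,u,\xi^n_{iu})+\max_{u'}Q_{n-1}(\xi^n_{iu},u')-M(Q_{n-1})(i,u)$ is the sampling error. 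Since the $\xi^n_{iu}$ are drawn from $p(\cdot|i,u)$ independently of $\FF_{n-1}$, the sequence $U_n$ is a martingale difference satisfying \eqref{H1}.

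Next I would handle the stabilizer term. The key algebraic observation is that $f$ commutes with translations by $e$, so the ``centered'' process $\tilde Q_n=Q_n-f(Q_n)\,e$ and, more usefully, the difference between $Q_n$ and the genuine fixed-point-seeking iterate, is governed by a scalar recursion. The standard argument (following \cite{abb1}) is that the component of $Q_n$ along $e$ tracks the difference $f(Q_{n-1})-\optv$, which is driven to zero, so that the residual $\|Q_n-H(Q_n)\|_\infty$ coincides asymptotically with the residual of a true \eqref{Eq:skm} iteration for $H$. Thus it suffices to verify \eqref{eq:vargrowth} for the process. For (a): from the recursion, $\|Q_n\|_\infty\le(1-\alpha_n)\|Q_{n-1}\|_\infty+\alpha_n\big(\|g\|_\infty+\|Q_{n-1}\|_\infty+|f(Q_{n-1})-\optv|\big)$; using $0\le g\le\bar g$, $|\optv|\le\bar g$, and the Lipschitz/translation property of $f$ to bound $|f(Q_{n-1})-\optv|$ by a constant plus a contribution already controlled, one gets $\|Q_n\|_\infty\le\|Q_{n-1}\|_\infty+\eta\,\alpha_n$ for a suitable $\eta$. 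For (b): since $U_n$ is a bounded affine function of $\max_{u'}Q_{n-1}(\cdot,u')$ plus the bounded rewards, $\|U_n\|_\infty^2\le c(1+\|Q_{n-1}\|_\infty^2)$ pointwise, hence in expectation; converting between $\|\cdot\|_\infty$ and $\|\cdot\|_2$ costs only the dimension-dependent constant $\mu$ from \eqref{H2p}.

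With \eqref{eq:vargrowth}(a)--(b) in hand, \cref{Thm:RVI-Qg} applies directly: it yields almost-sure convergence $Q_n\to Q^*\in\fix(H)$ and the residual bound $\EE(\|Q_n-H(Q_n)\|_\infty)\le\kappa_a/\sqrt{\tau_n}$ with the stated asymptotics, after translating $\|\cdot\|$ there to $\|\cdot\|_\infty$ here. It then remains only to identify the limit: the almost-sure convergence $f(Q_n)\to\optv$ follows from continuity of $f$ and $f(Q_n)-\optv\to 0$, and since $f(Q^*)=\lim f(Q_n)=\optv$, the limit $Q^*$ is the \emph{unique} fixed point of $H$ normalized by $f(Q^*)=\optv$ (uniqueness up to $e$-translation plus the normalization fixes it).

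**The main obstacle** I anticipate is the rigorous decoupling of the stabilizer: showing that replacing $f(Q_{n-1})$ by $\optv$ is legitimate, i.e.\ that the error $f(Q_{n-1})-\optv$ vanishes and does so fast enough to be absorbed into the vanishing-noise framework (so that the residual of the \eqref{eq:Q-RVI} iterate and that of the reference $H$-iterate agree asymptotically). This requires either invoking \cref{Thm:Vanishing_Noise}-type arguments for the extra perturbation term $-[f(Q_{n-1})-\optv]\,e$, or a separate two-timescale-style estimate à la \cite{abb1} controlling the $e$-component; care is needed because a priori $\|Q_{n-1}\|$ is only known to grow at rate $O(\tau_{n-1})$, so one must first establish that the centered iterates are bounded before the stabilizer error can be shown to be summable against $\alpha_n$. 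Verifying \eqref{eq:vargrowth}(a) and the boundedness of the centered process in tandem — a mild bootstrap — is the technical heart; the rest is bookkeeping with \eqref{H2p} and the explicit forms of $f$.
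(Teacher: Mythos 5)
You have the right ingredients (the reference iteration with $\optv$ in place of the stabilizer, the observation that the two processes differ along $e$, and \cref{Thm:RVI-Qg} as the engine), but the proposal leaves a genuine gap exactly at the step you yourself flag as the ``technical heart''. You propose to verify \eqref{eq:vargrowth} for the \emph{genuine} iterates $Q_n$ driven by $H(Q_{n-1})+U_n-[f(Q_{n-1})-\optv]e$, bounding $|f(Q_{n-1})-\optv|$ by ``a constant plus a contribution already controlled''. This does not work as stated: for the admissible stabilizers one only has $|f(Q_{n-1})|\lesssim\|Q_{n-1}\|_\infty$, so the recursion gives $\|Q_n\|_\infty\leq\|Q_{n-1}\|_\infty+\alpha_n\big(c+\|Q_{n-1}\|_\infty\big)$, which is \emph{not} of the form $\|Q_{n-1}\|_\infty+\eta\,\alpha_n$ required by \eqref{eq:vargrowth}(a) unless you already know the iterates are bounded -- and that boundedness is precisely what you defer to an unspecified bootstrap, a \cref{Thm:Vanishing_Noise}-type argument (which would need the far-from-obvious summability of $\alpha_k|f(Q_{k-1})-\optv|$), or a two-timescale estimate. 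Moreover, \cref{Thm:RVI-Qg} assumes the perturbation is a martingale difference satisfying \eqref{H1}, whereas $-[f(Q_{n-1})-\optv]e$ is $\FF_{n-1}$-measurable and nonzero, so the theorem cannot be invoked for $Q_n$ directly; and your phrase ``$f(Q_{n-1})-\optv$ \ldots is driven to zero'' is circular at this stage, since that is a conclusion, not a hypothesis.

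The paper's proof removes the obstacle entirely rather than confronting it. Apply \cref{Thm:RVI-Qg} \emph{only} to the hypothetical process $Q^\optv_n$ defined with the constant $\optv$ and the \emph{same} samples and initialization: there \eqref{eq:vargrowth}(a) holds trivially with $\eta=\overline{g}$ (since $|g(i,u,\xi)-\optv|\leq\overline{g}$) and (b) follows from $\|U_n\|_\infty\leq\overline{g}+2\|Q^\optv_{n-1}\|_\infty$, so $Q^\optv_n\to Q^\optv\in\fix(H)$ with the stated residual bound. Then a one-line induction using $f(Q+ce)=f(Q)+c$ and the translation property of the max shows the \emph{exact} identity $Q_n=Q^\optv_n+c_n e$ with $c_n=(1-\alpha_n)c_{n-1}+\alpha_n(\optv-f(Q^\optv_{n-1}))$, $c_0=0$. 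Continuity of $f$ gives $c_n\to\optv-f(Q^\optv)$, hence $Q_n\to Q^*=Q^\optv+(\optv-f(Q^\optv))e$ and $f(Q_n)\to\optv$; and since $Q\mapsto Q-H(Q)$ is invariant under translation by multiples of $e$, the residuals of the two processes are \emph{identical} for every $n$ (not merely asymptotically equal), so the error bound transfers verbatim. No boundedness bootstrap, no summability of the stabilizer error, and no two-timescale argument is needed -- this exact coupling is the missing idea in your write-up.
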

\begin{proof}
We will compare  $(Q_n)_{n\in\NN}$  to  an hypothetical  sequence $(Q_n^\optv)_{n\in\NN}$
generated from the same initial point $Q_0^\optv=Q_0$  by the recursion
\begin{equation}\label{eq:Q-RVI-beta2}
Q_{n}^{\optv}(i,u)=(1-\alpha_{n})Q^{\optv}_{n-1}(i,u)+\alpha_n\big [g(i,u,\xi^{n}_{iu})+\max_{u'\in\AA}Q^{\optv}_{n-1}(\xi^{n}_{iu},u')-\optv\big ].
\end{equation}
This cannot be implemented (unless $\optv$ is known), but we can still use it as a benchmark 
to compare with the original $Q_n$.  
 In order to study $Q_n^\optv$, we consider the filtration ${\mathcal F}_n=\sigma(\{(Q^\optv_n,\xi^k): k\leq n\})$ induced by the process,
and denote $Z_{n}\in\RR^d$ the random vector with components
 $$Z_{n}(i,u)=g(i,u,\xi^{n}_{iu})+\max_{u'\in\AA}Q^\optv_{n-1}(\xi^{n}_{iu},u').$$ 
Observing that $Q^\optv_{n-1}$ is ${\mathcal F}_{n-1}$-measurable,
we have $\EE(Z_{n}|{\mathcal F}_{n-1})=M(Q^\optv_{n-1})$. Hence, $U_{n}=Z_{n}-M(Q^\optv_{n-1})$ is a martingale 
difference sequence and \eqref{eq:Q-RVI-beta2} can be rewritten in compact form in the framework of \eqref{Eq:skm} as
\begin{equation*}
Q_{n}^{\optv}=(1-\alpha_{n})Q^{\optv}_{n-1}+\alpha_n\big(H(Q^\optv_{n-1})+U_{n}\big).
\end{equation*}
Now, \eqref{H0} is clearly satisfied, while  $\EE(U_n|{\mathcal F}_{n-1})=0$ holds by construction.
On the other hand, from the definition of $U_n$ it follows directly that $\|U_n\|_{\infty}\leq \overline{g}+2\|Q^\optv_{n-1}\|_{\infty}$.
Also, since $|g(i,u,\xi_{iu}^{k})-v|\leq\overline{g}$, from \eqref{eq:Q-RVI-beta2} we get 
$\|Q^\optv_n\|_{\infty}\leq\|Q^\optv_{n-1}\|_{\infty}+ \overline{g}\,\alpha_n$.

This shows that  all the conditions of \cref{Thm:RVI-Qg} are met. It follows that 
$Q^\optv_n$ converges  almost surely to some $Q^\optv\in\fix(H)$, and
{\em a fortiori} $f(Q^\optv_{n})\to f(Q^\optv)$.
Now, substracting \eqref{eq:Q-RVI} and \eqref{eq:Q-RVI-beta2}, a simple induction shows that 
$Q_n-Q_n^\optv=c_n\, e$ where the sequence $c_n\in\RR$ satisfies $c_0=0$ and 
\begin{equation*}
 c_n=(1-\alpha_n)c_{n-1}+\alpha_n\big(\optv-f(Q^\optv_{n-1})),
 \end{equation*}
 which implies $c_n\to\optv-f(Q^\optv)$. It follows that $Q_n\to Q= Q^\optv+(\optv-f(Q^\optv))\,e$
which is also in $\fix(T)$ with $f(Q)=\optv$, so that in fact $Q=Q^*$ and  $f(Q_n)\to\optv$.
Finally, since $Q\mapsto Q-H(Q)$ is invariant by addition of constants, we have $\|Q_n-H(Q_n)\|_\infty=\|Q^\optv_n-H(Q^\optv_n)\|_\infty$ 
and \eqref{eq:Q-RVI-Error2} follows directly from \eqref{eq:Q-RVI-Errorg} applied to $Q_n^\optv$.
\end{proof}

\begin{remark} With a more detailed analysis, similar to the one  in \cref{appendix00},
one can obtain an explicit expression for $\kappa_a$
in terms of the primitives $a$, $\overline{g}$ and $Q_0$. 
\end{remark}

\subsection{Numerical example} 
We illustrate \cref{Thm:RVI-Q2} on a small example borrowed from \cite[Duff]{Duff1995},
with two states $\SS=\{1,2\}$ and two actions $\AA=\{u_1,u_2\}$. The rewards depend only on the 
final state with $g(i,u,1)=1.0$  and $g(i,u,2)=10.0$ for all $(i,u) \in \SS \times\AA$, and the 
actions only control the transition probabilities as in \cref{fig:transprob}.
\begin{figure}[ht] 
\centering
\begin{tikzpicture}[thick,scale=1.2,->,shorten >= 1pt,shorten <= 1pt,every node/.style={scale=0.6}]
 \node[draw,circle] (1) at (0,0) {$1$};
  \node[circle] (3) at (1.5,0.05) {\large action $u_1$};
 \node[draw, circle] (2) at (3,0) {$2$};
 \draw (1) to [->,out=135,in=225,looseness=8] node [fill=white] {0.3} (1);
\draw (1) to [bend left = 35]   node [fill=white] {0.7}  (2) ;
\draw (2) edge [bend left]   node  [fill=white]  { 0.7} (1) ;
\draw (2) edge [out=45,in=-45,looseness=8]   node  [fill=white]  {0.3} (2) ;
\end{tikzpicture}
\hspace{1cm}
\begin{tikzpicture}[thick,scale=1.2,->,shorten >= 1pt,shorten <= 1pt,every node/.style={scale=0.6}]
 \node[draw,circle] (1) at (0,0) {1};
 \node[draw, circle] (2) at (3,0) {2};
 \draw (1) to [->,out=135,in=225,looseness=8] node [fill=white] {0.9} (1);
\draw (1) to [bend left = 35]   node [fill=white] {0.1}  (2) ;
  \node[circle] (3) at (1.5,0.05) {\large action $u_2$};
\draw (2) edge [bend left]   node  [fill=white]  {0.1} (1) ;
\draw (2) edge [out=45,in=-45,looseness=8]   node  [fill=white]  {0.9} (2) ;
\end{tikzpicture}
\vspace{-1ex}

\caption{Transition probabilities for actions $u_1$ (left) and $u_2$ (right)\label{fig:transprob}}
\end{figure}
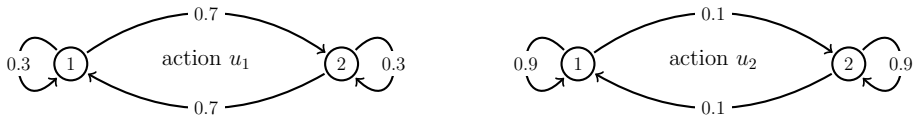

\noindent
Observe that this example clearly satisfies \eqref{Eq:Qlearn}. The optimal value is $v=\frac{71}{8}$ and, for $f(Q)=\max_{i,u}Q(i,u)$,  the fixed point $Q^*$ is 
$$Q^*=\frac{1}{8}\begin{pmatrix} 53 &-1\\ 17&71\end{pmatrix}.$$ 
We performed 100 independent runs of RVI-$Q$-learning starting from $Q_0\equiv 0$ and with  $\alpha_n=1/(n+1)^a$ (10.000 iterations in each run).
The upper plots in \cref{Fig:RVI-fQ} show the evolution of the values $f(Q_n)$ over 10 runs, for  $a=1$ (left) and  $a=\frac{4}{5}$ (right).
The bottom plots show the corresponding rates for $\EE(\|Q_n-H(Q_n)\|_\infty)$, estimated as the average over the 100 runs and rescaled
by the theoretical rates.
These graphics are in agreement with our estimates, although the decreasing trend  in the lower  plots suggest that there might be some room for
improving our asymptotic rates. The trajectories for $a=\frac{4}{5}$, which in fact is not covered by \cref{Thm:RVI-Q2},
are more noisy than the simulations for $a=1$. Further simulations with $a$ close to $\frac{1}{2}$ exhibit larger and persistent oscillations, 
even when started from the solution $Q_0=Q^*$, with no signs of achieving convergence.

\begin{figure}[htbp]
    \centering
\includegraphics[scale=0.9,clip, trim=4.5cm 10.5cm 1.3cm 4cm]{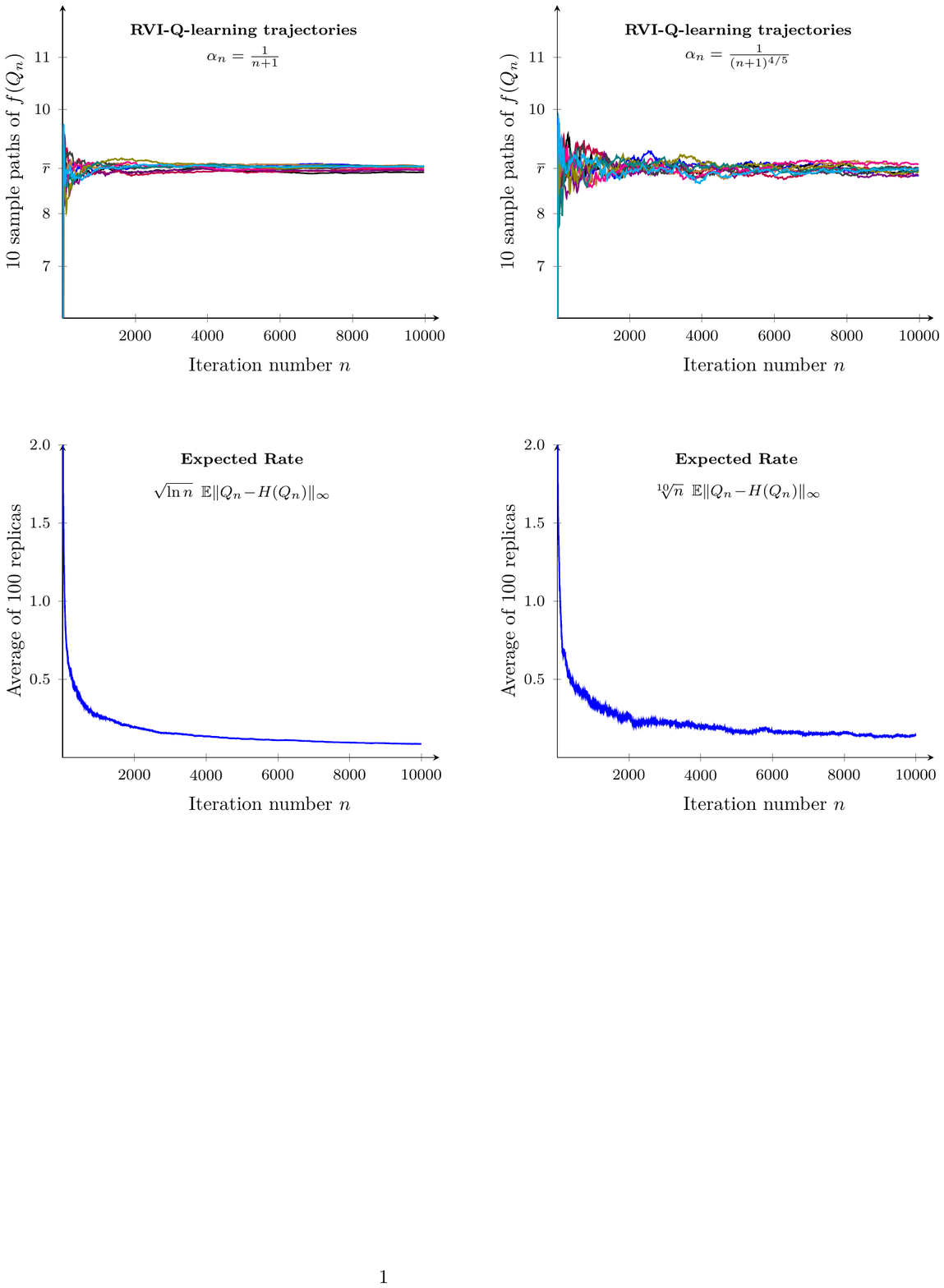} 
    \caption{\label{Fig:RVI-fQ} RVI-$Q$-learning for $\alpha_n=1/(n+1)^a$ with $a=1$ (left) and $a=\frac{4}{5}$ (right).}
\end{figure}

\section{The case of uniformly bounded variances}\label{sec:BndVar}
In this section we derive more explicit error bounds for the case where the noise sequence $U_n$
is a martingale sequence which is uniformly bounded in $L^2$, that is, under the assumption
 \begin{equation*}\tag{\mbox{\sc bVar}}\label{Eq:BoundedVar}
\mbox{There  exists a finite constant $\Var\geq 0$ such that $\sup_{n\in\NN}\Var_n\leq\Var$. }
\end{equation*}
We analyze subsequently 
the case of constant stepsizes over a fixed horizon, and then the case of stepsizes of power form.

\subsection{Fixed horizon with constant stepsizes}

By considering a pre-fixed number of iterations $n_0$ we can still exploit the 
bound \eqref{Eq:RateExpected} with constant stepsizes $\alpha_n\equiv\alpha$.

\begin{theorem}\label{Thm:Rate_Expectation_constant}
Assume \eqref{H1}, \eqref{Eq:BoundedKappa} and  \eqref{Eq:BoundedVar}.
For the \eqref{Eq:skm} iterates with constant
stepsizes $\alpha_n\equiv\alpha$ and $\beta=1-\alpha$, we have
\begin{equation}\label{Eq:error_constant_stepsize}
\EE(\|x_n-Tx_n\|)\leq \frac{ \bar{\kappa}}{\sqrt{\pi\alpha\beta\,n}}+\frac{2\normeq \Var \sqrt{\alpha}}{\sqrt{1\!+\!\beta}}\left[2\sqrt{\frac{\alpha\, n}{\pi\beta}}+\alpha+2\right].
\end{equation}
In particular, considering a fixed $n_0$ and constant stepsizes $\alpha_n\equiv{1}/{(6\, n_0^{2/3})}$ 
we have
\begin{equation}\label{Eq:rate_constant_stepsize}
\EE(\|x_{n_0}-Tx_{n_0}\|)
\leq \frac{2(\bar\kappa+\mu\Var)}{\sqrt[6]{n_0}}.
\end{equation}
\end{theorem}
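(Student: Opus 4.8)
The plan is to derive \eqref{Eq:error_constant_stepsize} by specializing the general bound \eqref{Eq:RateExpected} to constant stepsizes, and then to optimize the resulting expression over $\alpha$ for a fixed horizon $n_0$ to obtain \eqref{Eq:rate_constant_stepsize}. First I would compute the relevant quantities explicitly: with $\alpha_n\equiv\alpha$ and $\beta=1-\alpha$ one has $\tau_n=n\alpha\beta$, so $\sigma(\tau_n)=\min\{1,1/\sqrt{\pi\alpha\beta n}\}\leq 1/\sqrt{\pi\alpha\beta n}$, which handles the leading term $\bar\kappa\,\sigma(\tau_n)$. Next, $\pi_k^n=\alpha\,\beta^{\,n-k}$, so under \eqref{Eq:BoundedVar} we get $\sumprob_n=\theta\sqrt{\sum_{k=1}^n(\pi_k^n)^2}=\theta\,\alpha\sqrt{\sum_{j=0}^{n-1}\beta^{2j}}\leq\theta\,\alpha/\sqrt{1-\beta^2}=\theta\sqrt{\alpha}/\sqrt{1+\beta}$. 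This gives the uniform bound $\sumprob_{k-1}\leq\theta\sqrt{\alpha}/\sqrt{1+\beta}$ that controls the last term $4\mu\,\sumprob_n$ and, pulled out of the sum, the middle term.

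For the middle sum $2\mu\sum_{k=2}^n\alpha\,\sigma(\tau_n-\tau_k)\,\sumprob_{k-1}$, I would factor out the uniform bound on $\sumprob_{k-1}$ and estimate $\sum_{k=2}^n\alpha\,\sigma((n-k)\alpha\beta)$. Splitting off the $k=n$ term (where $\sigma(0)=1$, contributing $\alpha$) and bounding the rest by $\alpha\sum_{k=2}^{n-1}1/\sqrt{\pi(n-k)\alpha\beta}\leq(\alpha/\sqrt{\pi\alpha\beta})\sum_{j=1}^{n}1/\sqrt{j}\leq(\alpha/\sqrt{\pi\alpha\beta})\cdot 2\sqrt{n}=2\sqrt{\alpha n/(\pi\beta)}$, and recalling the separated $k=n$ term from $\sigma(0)=1$ together with the final $4\mu\sumprob_n$ which contributes the ``$+2$'', collecting everything yields exactly the bracketed factor $[\,2\sqrt{\alpha n/(\pi\beta)}+\alpha+2\,]$ in \eqref{Eq:error_constant_stepsize}. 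One should double-check that $\sigma(0)=1$ (since $1/\sqrt{0}=\infty$) so that term is genuinely $\alpha$ and not $\alpha\sigma(\cdot)$; this is the only place where the nonsmooth $\min$ in the definition of $\sigma$ plays a role.

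Finally, for \eqref{Eq:rate_constant_stepsize} I would substitute $\alpha=1/(6n_0^{2/3})$ and $n=n_0$ into \eqref{Eq:error_constant_stepsize}. Then $\alpha\beta n_0\geq\alpha n_0/2$ roughly, more precisely $\pi\alpha\beta n_0=\pi n_0^{1/3}\beta/6$, so the first term is of order $\bar\kappa/(n_0^{1/6}\sqrt{\pi\beta/6})$; and $\alpha n_0/\beta = n_0^{1/3}/(6\beta)$, so $2\sqrt{\alpha n_0/(\pi\beta)}=2n_0^{1/6}/\sqrt{6\pi\beta}$ which, multiplied by the prefactor $2\mu\theta\sqrt{\alpha}/\sqrt{1+\beta}\sim 2\mu\theta/(n_0^{1/3}\sqrt{6}\sqrt{1+\beta})$, produces the dominant $\mu\theta/n_0^{1/6}$ contribution; the leftover $\alpha+2\leq 3$ inside the bracket contributes only $O(n_0^{-1/3})$. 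The main obstacle — really the only delicate point — is bookkeeping the numerical constants so that all lower-order terms ($n_0^{-1/3}$ and smaller) can be absorbed and the clean bound $2(\bar\kappa+\mu\theta)/\sqrt[6]{n_0}$ holds for every $n_0\geq 1$ (not merely asymptotically); here one uses that $\alpha=1/(6n_0^{2/3})\leq 1/6$ so $\beta\geq 5/6$, which makes all the $\sqrt{\beta}$, $\sqrt{1+\beta}$ factors bounded below by absolute constants, and $6\pi>16$ gives the needed slack to collapse the constants to $2$. I would verify the worst case $n_0=1$ separately if the constant is tight.
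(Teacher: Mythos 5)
Your proposal is correct and follows essentially the paper's own route: the same computations $\tau_n=\alpha\beta\,n$, $\pi_k^n=\alpha\beta^{n-k}$, and $\sumprob_n\leq \Var\sqrt{\alpha}/\sqrt{1+\beta}$ feed into the Section~2 error bound, and \eqref{Eq:rate_constant_stepsize} is obtained, just as in the paper, by substituting $\alpha=1/(6\,n_0^{2/3})$ and absorbing the lower-order terms using $\alpha\leq\frac{1}{6}$ and $n_0\geq 1$. The only (immaterial) difference is that you control the middle sum of \eqref{Eq:RateExpected} directly via $\sum_{j\leq n}j^{-1/2}\leq 2\sqrt{n}$, whereas the paper invokes \cref{Thm:convolution_bound} with the constant function $h\equiv \Var\sqrt{\alpha}/(\beta\sqrt{1+\beta})$; both yield the identical bracket $\bigl[\,2\sqrt{\alpha n/(\pi\beta)}+\alpha+2\,\bigr]$ in \eqref{Eq:error_constant_stepsize}.
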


\begin{proof}
With constant stepsizes we have $\tau_n=\alpha\beta\,n$ and $\pi_k^n=\alpha\beta^{n-k}$ so that  
$$\sumprob_n\leq\mbox{$\Var\alpha\sqrt{\sum_{k=1}^n\beta^{2(n-k)}}\leq \frac{\Var\alpha}{\sqrt{1-\beta^2}}=\frac{\Var\sqrt{\alpha}}{\sqrt{1+\beta}}$},$$
and then \eqref{Eq:error_constant_stepsize} follows directly from \eqref{Eq:RateExpected2} with the constant function $h(x)\equiv\frac{\Var\sqrt{\alpha}}{\beta\sqrt{1+\beta}}$.
Substituting $\alpha={1}/{(6\, n_0^{2/3})}$ in \eqref{Eq:error_constant_stepsize} and after some routine algebra we get
$$\EE(\|x_{n_0}-Tx_{n_0}\|)\leq \frac{1}{\sqrt[6]{n_0}}\left(\mbox{$\frac{\sqrt{6}\, \bar\kappa }{\sqrt{\pi (1-\alpha)}}+\left(\frac{2+\alpha}{\sqrt{6}\sqrt[6]{n_0}}+\frac{1}{3\sqrt{\pi (1-\alpha)}}\right)  \frac{2\mu\Var}{\sqrt{2-\alpha}}$}\right)$$
and \eqref{Eq:rate_constant_stepsize} follows using $\alpha\leq \frac{1}{6}$ and $n_0\geq 1$ to bound the coefficients of 
$\bar\kappa $ and $\mu\Var$ in the expression in parenthesis.
\end{proof}

\begin{remark}
The choice $\alpha_n\equiv{1}/{(6\, n_0^{2/3})}$ is just one among many alternatives. As a mater of fact,
for any given values of $(\bar\kappa,\Var,\mu, n_0)$ the bound \eqref{Eq:error_constant_stepsize} has a unique 
minimizer $\alpha^*\in(0,1)$ which can be computed numerically and yields a
nonasymptotic bound $O({1}/{\sqrt[6]{n_0}})$ with a smaller  proportionality constant. 
\end{remark}

\begin{remark} \label{remark3.6} A natural question is whether the error bound \eqref{Eq:error_constant_stepsize}
can be improved by using some variance reduction technique. The simplest 
 approach would be to use a minibatch of size $k$ at every iteration, which reduces the variance 
 from $\Var^2$ to $\Var^2/k$, and then choose $k$ 
 and $\alpha$ that jointly minimize the bound for a fixed horizon $n_0$. In terms of oracle complexity, 
it turns out that for a given budget $N=k \,n_0$ of calls to the oracle
  the optimal batch size is $k=1$, which is just the original iteration.
We also tried variable batch sizes but were unable to improve the  complexity.
This confirms that \eqref{Eq:skm} has already some built-in variance reduction.
Naturally, this does not exclude the possibility of an improvement by using more sophisticated 
variance reduction schemes, or alternative bounds stronger than those in \cref{Thm:BCP}.
\end{remark}

\subsection{Stepsizes of power form}
Let us use \eqref{Eq:RateExpected2} to obtain  explicit error bounds and rates of convergence for 
stepsizes of the form $\alpha_n={1}/{(n+1)^a}$ with $\frac{1}{2}\leq a\leq 1$. We will see that the 
last two terms  $2 \normeq \Var \,\alpha_n\sumprob_{n-1}+4 \normeq \Var \,\sumprob_n$ in $B_{n}$ are
asymptotically dominated by the first two, and that each one of the leading terms may
dominate depending on the exponent $a$. For $a=\frac{2}{3}$ both terms are of the same 
order and yield the fastest rate which turns out to be of order $O(\ln(n)/\sqrt[6]{n})$.

To state  precisely our next result we denote 
\begin{equation}\label{eq:taua}
\tau_n(a)=\sum_{k=1}^n\alpha_k(1-\alpha_k)=\sum_{k=1}^n\mbox{$\frac{1}{(k+1)^a}(1-\frac{1}{(k+1)^a})$}
\end{equation}
and, in addition to $\gamma_a$ given by \eqref{eq:gamma_ag}, we
introduce the following parameters
\begin{equation}\label{eq:power}
\left\{
\begin{array}{rcl}
b_a&=&\mbox{$\frac{a}{2(1-a)}$}\\[0.5ex]
\lambda_a&=&\mbox{$\big(3^{a/2}-3^{-a/2}\big)^{2(1-a)/a} -(1\!-\!a)\, \tau_2(a)$}\\[0.5ex]
d_a&=&\mbox{$\frac{2\gamma_a}{3a-2}\big(\lambda_a+(1-a) \,\tau _1(a) \big)^{1-b_a}$}.
\end{array}
\right.
\end{equation}

\begin{theorem}\label{Thm:RatePower} Assume \eqref{H1}, \eqref{Eq:BoundedKappa},  \eqref{Eq:BoundedVar}, and $\alpha_n\!=\!{1}/{(n\!+\!1)^a}$ with $\frac{1}{2}\!\leq a\!\leq 1$.
Then $\EE(\|x_n-Tx_n\|)\leq\! B_{n,a}$ with explicit error bounds $B_{n,a}$  (see
\cref{Cor:ErrorBoundPower,Cor:ErrorBoundPower1,le:B3}), whose tight asymptotics as $n\to\infty$ are given by
\begin{equation}\label{Eq:ratefinal}
B_{n,a}\approx\left\{\begin{array}{ll}
\dfrac{2\normeq \Var\gamma_a}{\sqrt{1-a}}\dfrac{\Gamma(1-b_a)}{\Gamma(\frac{3}{2}-b_a)}\, \dfrac{1}{n^{a-\frac{1}{2}}}&\mbox{if }\frac{1}{2}\leq a<\frac{2}{3},\\[3ex]
 \dfrac{2\normeq \Var\gamma_{2/3}}{\sqrt{3\pi}}\,\dfrac{\ln(n)}{\sqrt[6]{n}}&\mbox{if }a=\frac{2}{3},\\[3ex]
 \dfrac{(\bar\kappa+2\normeq \Var d_a )\sqrt{1-a}}{\sqrt{\pi}}\,\dfrac{1}{n^{(1-a)/2}}&\mbox{if }\frac{2}{3}<a<1,\\[3ex]
 \dfrac{\bar\kappa+2\mu\Var d_1}{\sqrt{\pi}}\dfrac{1}{\sqrt{\ln n}}&\mbox{if }a=1. 
\end{array}\right.
\end{equation}
Here, $\Gamma$ stands for the Gamma function.
\end{theorem}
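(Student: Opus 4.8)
The plan is to specialize \cref{Thm:convolution_bound} to the power stepsizes $\alpha_n=1/(n+1)^a$ and then read off the tight asymptotics of each of the four terms in the bound $B_n$ of \eqref{Eq:RateExpected2}. First I would check the hypotheses: \eqref{H1} and \eqref{Eq:BoundedKappa} are assumed, and under \eqref{Eq:BoundedVar} the estimate $\sumprob_{n-1}\le\theta\sqrt{\alpha_n}$ of \cref{Lemma:L3} (already used in \cref{Ex:asc23}) holds; moreover the constant $\gamma_a$ of \eqref{eq:gamma_ag} is finite, with $\gamma_a\le\gamma_1=\tfrac{32}{27}$, so \eqref{eq:convol}(b) is satisfied. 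It then remains to produce a convex decreasing $C^2$ majorant $h$ with $\sumprob_{k-1}\le(1-\alpha_k)\,h(\tau_k)$. Since $\sqrt{\alpha_k}=(k+1)^{-a/2}$ while, for $a<1$, $\tau_k$ grows like $(k+1)^{1-a}/(1-a)$ and, for $a=1$, like $\ln(k+1)$, one inverts this relation — this is exactly where the auxiliary estimates \cref{Lemma:kappa,Lemma:alfa1} of \cref{appendix1} enter, and where the constants $b_a$, $\lambda_a$, $d_a$ of \eqref{eq:power} are produced — obtaining $\sqrt{\alpha_k}\le\omega\,(1+\tau_k)^{-b_a}$ for $a<1$ with $b_a=\tfrac{a}{2(1-a)}$, and $\sqrt{\alpha_k}\le\omega\,e^{-\tau_k/2}$ for $a=1$, with an explicit $\omega=\omega(a,\theta)$. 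The candidates $h(x)=\omega(1+x)^{-b_a}$ and $h(x)=\omega e^{-x/2}$ are convex and decreasing only on a half-line $[x_a,\infty)$, so one smooths them on $[0,x_a]$ keeping them above $\sumprob_{k-1}/(1-\alpha_k)$, which only perturbs the convolution integral by $O(1/\sqrt{\tau_n})$ and hence cannot affect the leading term.

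Next I would extract the asymptotics of the convolution integral $\int_{\tau_1}^{\tau_n}h(x)/\sqrt{\tau_n-x}\,dx$, an incomplete Beta-type integral whose behaviour changes at $b_a=1$, that is, at $a=\tfrac23$. For $b_a<1$ ($\tfrac12\le a<\tfrac23$) it grows like $\omega\,\tau_n^{1/2-b_a}$ times the Beta constant $\Gamma(1-b_a)\Gamma(\tfrac12)/\Gamma(\tfrac32-b_a)$ and dominates the term $\bar\kappa/\sqrt{\pi\tau_n}$; for $b_a=1$ ($a=\tfrac23$) it behaves like $\omega\,\ln(\tau_n)/\sqrt{\tau_n}$, still dominant; for $b_a>1$ ($\tfrac23<a<1$) and for $a=1$ the tails $\int_0^\infty(1+x)^{-b_a}dx=\tfrac{1}{b_a-1}$ and $\int_0^\infty e^{-x/2}dx=2$ converge, so the integral is of order $1/\sqrt{\tau_n}$ and combines with $\bar\kappa/\sqrt{\pi\tau_n}$ on an equal footing, the integral's share producing the $d_a$ contribution. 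In every regime the last two terms $2\mu\alpha_n\sumprob_{n-1}+4\mu\sumprob_n$ are $O(\sqrt{\alpha_n})=O(n^{-a/2})$, hence of strictly smaller order than the dominant term for all $a\in[\tfrac12,1]$, so they contribute nothing to the leading asymptotics.

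Finally, translating $\tau_n\approx n^{1-a}/(1-a)$ ($a<1$) and $\tau_n\approx\ln n$ ($a=1$) into $n$-asymptotics, substituting the explicit $\omega$, and simplifying via the identity $(1-a)b_a=a/2$ (so that $\omega\,\tau_n^{1/2-b_a}\propto n^{1/2-a}$ with the stray powers of $(1-a)$ collapsing to a single $(1-a)^{-1/2}$) and the Gamma-function form of the Beta constant (whose factor $\Gamma(\tfrac12)$ cancels the $\sqrt\pi$ in the prefactor), one arrives at the four clean expressions of \eqref{Eq:ratefinal}; packaging the non-asymptotic estimates obtained for each range of $a$ yields the explicit bounds $B_{n,a}$ recorded in \cref{Cor:ErrorBoundPower,Cor:ErrorBoundPower1,le:B3}. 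I expect the bookkeeping in the first paragraph to be the main obstacle: deriving $\sumprob_{k-1}\le(1-\alpha_k)h(\tau_k)$ with a \emph{fully explicit} constant $\omega$ requires sharp two-sided comparisons between $\tau_k$ and $(k+1)^{1-a}$ with controlled remainders, valid for every $k\ge2$; a secondary difficulty is the uniform asymptotic evaluation of the Beta-type integral across the three regimes meeting at $a=\tfrac23$, together with verifying that the modification of $h$ near the origin is genuinely harmless.
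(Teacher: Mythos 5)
Your overall route is the paper's: combine \cref{Lemma:L3} ($\sumprob_{k-1}\le\Var\sqrt{\alpha_k}$) with \cref{Thm:convolution_bound}, convert $\sqrt{\alpha_k}$ into a function of $\tau_k$ via the appendix estimates, and read off the asymptotics of the Beta-type convolution integral in the three regimes separated by $b_a=1$ (i.e.\ $a=\tfrac23$), the terms $2\mu\alpha_n\sumprob_{n-1}+4\mu\sumprob_n=O(n^{-a/2})$ being negligible and $\tau_n\approx n^{1-a}/(1-a)$ (resp.\ $\ln n$). However, there is a genuine problem with your choice of majorant. The theorem asserts \emph{tight} asymptotics with the explicit constants of \eqref{eq:power}, and with $h(x)=\omega(1+x)^{-b_a}$ you cannot simultaneously have validity of $\sqrt{\alpha_k}/(1-\alpha_k)\le h(\tau_k)$ for all $k\ge2$ and asymptotic sharpness $\omega\approx(1-a)^{-b_a}$: already for $a=\tfrac12$, $k=2$ the quantity $\frac{\sqrt{\alpha_k}}{1-\alpha_k}(1+\tau_k)^{b_a}$ exceeds its limit $\sqrt2$ by a sizable margin, so a globally valid $\omega$ inflates the leading constant in the range $\tfrac12\le a<\tfrac23$, while a sharp $\omega$ forces the truncation you propose. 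And your claim that modifying $h$ on $[0,x_a]$ ``cannot affect the leading term'' fails precisely in the regime $\tfrac23<a\le1$: there the leading term is itself $\Theta(1/\sqrt{\tau_n})$, and by dominated convergence its constant is $\bar\kappa/\sqrt{\pi}$ plus $\tfrac{2\mu\gamma_a}{\sqrt{\pi}}\int_{\tau_1}^{\infty}h(x)\,dx$; the stated $d_a=\tfrac{2\gamma_a}{3a-2}\bigl(\lambda_a+(1-a)\tau_1\bigr)^{1-b_a}$ is exactly this tail integral for the paper's $h$, so any alteration of $h$ on an initial interval (or a different $\omega$) changes the asserted constant. The same issue arises at $a=1$, where $d_1=\sqrt3\,e^{1/9}\tfrac{32}{27}$ comes from the unmodified $\omega=\tfrac{\sqrt3}{2}e^{17/72}$ of \cref{Lemma:alfa1}.

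The paper sidesteps both difficulties by using the \emph{additive-shift} parametrization of \cref{Lemma:kappa}: $\frac{\sqrt{\alpha_n}}{1-\alpha_n}\le(\lambda_a+(1-a)\tau_n)^{-b_a}$ holds for all $n\ge2$ with the optimal $\lambda_a>0$ (via monotonicity of the sequence $\xi_n$), so $h(x)=\Var(\lambda_a+(1-a)x)^{-b_a}$ is already decreasing, convex and $C^2$ on $(0,\infty)$ — no smoothing is needed at all in the bounded-variance setting (the modification trick you import belongs to the growing-variance \cref{Thm:RVI-Qg}) — and it is asymptotically exact because the slack sits inside the power rather than as a multiplicative factor. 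The convolution integral is then evaluated in closed form (a hypergeometric ${}_2F_1$ for $a<1$, a logarithm at $a=\tfrac23$, Dawson's function at $a=1$), and passing to the limit yields exactly the $\Gamma$-ratio, the $\sqrt{3\pi}$ constant, and $d_a$, $d_1$. To complete your argument as written you would have to replace your $\omega(1+x)^{-b_a}$ (and the harmless-modification step) by this shifted family, or otherwise prove a bound whose tail integral from $\tau_1$ reproduces $d_a$; as it stands, the plan proves a statement of the same form but with different (larger or unidentified) constants for $a\in(\tfrac23,1]$, and possibly for $a\in[\tfrac12,\tfrac23)$ as well.
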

\begin{proof} The proof is technical and is presented in \cref{appendix00}.
It proceeds by analyzing separately the different ranges of the exponent $a$.
\end{proof}

Notice that when $a$ approaches $\frac{1}{2}$ the convergence rate slows down, and 
for $a=\frac{1}{2}$ the error bound does not converge to 0 but rather it has a strictly positive limit $2\sqrt{2\pi}\,\gamma_{1/2}\,\normeq \Var\sim 5.16\,\normeq \Var$.
\subsection{An application in stochastic optimization}\label{subsec:SGD}

Consider the standard stochastic gradient descent iteration for minimizing an expected cost $f(x)=\EE (F(x,\xi))$, 
with $F:\RR^d\times\Xi\to\RR$ and $\xi$ a random variable with values in $\Xi$, namely
\begin{equation}\label{Eq:sgd2}
\tag{\textsc{sgd}}
x_{n}=x_{n-1}-\gamma_n\nabla_x F(x_{n-1},\xi_n)
\end{equation}
where the $\xi_n$'s are i.i.d. samples of the random variable $\xi$. 

We assume that $f(\cdot)$ is
a well-defined smooth convex function, and  that $F(\cdot,\xi)$ is differentiable with $\nabla f(x)=\EE(\nabla_x F(x,\xi))$ globally
Lipschitz with a known constant $L$ for $\|\cdot\|_2$. 
As noted in \S\ref{sec:related},  \eqref{Eq:sgd} coincides 
with \eqref{Eq:skm} for $Tx=x-\frac{2}{L}\nabla f(x)$, $\alpha_n=\gamma_n L/2$,
 and $U_n=\frac{2}{L}[\nabla f(x_n)-\nabla_x F(x_n,\xi_n)]$.
 Clearly $\fix(T)=\mathop{\rm Argmin} f$, which we assume nonempty,
 while \cite[Corollary 10]{bh} shows that $T$ is nonexpansive. 
 
 \begin{corollary} In the above \eqref{Eq:sgd2} setting, take
 $\gamma_n=\frac{2}{L}\frac{1}{(n+1)^a}$ with $\frac{1}{2}\leq a\leq 1$
and suppose that the noise has uniformly bounded variance $\EE(\|U_n\|_2^2)\leq\Var^2$.
Then,
  \begin{equation}\label{eq:gradnorm}
(\forall n\geq 1)\qquad  \EE(\|\nabla f(x_n)\|_2\big)\leq \mbox{$\frac{L}{2}$} B_{n,a},
  \end{equation}
  with $B_{n,a}$ as in \cref{Thm:RatePower}. Moreover,
if $a\in(\frac{2}{3}, 1]$  then the iterates $(x_n)_{n\in\NN}$ converge  almost 
surely towards some point in $\mathop{\rm Argmin} f$.
 \end{corollary}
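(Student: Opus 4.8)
The plan is to recognize \eqref{Eq:sgd2} as a particular instance of \eqref{Eq:skm} and then read off the two conclusions from \cref{Thm:RatePower} and \cref{Thm:First}. As recalled in \S\ref{sec:related}, setting $Tx=x-\tfrac{2}{L}\nabla f(x)$, $\alpha_n=\gamma_nL/2=1/(n+1)^a$ and $U_n=\tfrac{2}{L}[\nabla f(x_{n-1})-\nabla_xF(x_{n-1},\xi_n)]$ turns \eqref{Eq:sgd2} into exactly \eqref{Eq:skm}; moreover $T$ is nonexpansive for $\|\cdot\|_2$ by \cite[Corollary 10]{bh}, $\fix(T)=\mathop{\rm Argmin} f$, and since the ambient norm is Euclidean we may take $\mu=1$ in \eqref{H2p}. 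The key identity is $x_n-Tx_n=\tfrac{2}{L}\nabla f(x_n)$, so that $\EE(\|\nabla f(x_n)\|_2)=\tfrac{L}{2}\,\EE(\|x_n-Tx_n\|_2)$, and \eqref{eq:gradnorm} follows once the right-hand side is bounded by $\tfrac{L}{2}B_{n,a}$.

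Next I would verify the hypotheses of \cref{Thm:RatePower}. Using the filtration $\mathcal F_n=\sigma(x_0,\xi_1,\dots,\xi_n)$, the iterate $x_{n-1}$ is $\mathcal F_{n-1}$-measurable while $\xi_n$ is independent of $\mathcal F_{n-1}$, hence $\escon{\nabla_xF(x_{n-1},\xi_n)}{\mathcal F_{n-1}}=\nabla f(x_{n-1})$ and therefore $\escon{U_n}{\mathcal F_{n-1}}=0$; combined with $\Var_n^2=\EE(\|U_n\|_2^2)\leq\Var^2<\infty$ this establishes \eqref{H1} and \eqref{Eq:BoundedVar}, and the stepsize condition $\alpha_n=1/(n+1)^a$ with $\tfrac12\leq a\leq 1$ is assumed. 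The one point needing care is \eqref{Eq:BoundedKappa}: from \cref{Ex:asc23} we have $\sumprob_{k-1}\leq\Var\sqrt{\alpha_k}$, so $S=\sum_{k\geq 2}\alpha_k\sumprob_{k-1}\leq\Var\sum_k(k+1)^{-3a/2}$, which is finite for $a>\tfrac23$; thus \eqref{Eq:BoundedKappa}(ii) holds with $\bar\kappa=2\mathop{\rm dist}(x_0,\fix(T))+S$ in that range, whereas for $\tfrac12\leq a\leq\tfrac23$ one falls back on \eqref{Eq:BoundedKappa}(i), which in any case only enters the leading asymptotics of $B_{n,a}$ through an already-dominated term. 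With all hypotheses in place, \cref{Thm:RatePower} gives $\EE(\|x_n-Tx_n\|_2)\leq B_{n,a}$, which yields \eqref{eq:gradnorm}.

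For the almost sure convergence when $a\in(\tfrac23,1]$, I would invoke \cref{Thm:First}: condition \eqref{H0} holds since $\alpha_n\in(0,1)$ and $\sum_k\alpha_k(1-\alpha_k)=\infty$, condition \eqref{H1} was checked above, and condition \eqref{H2} holds for this range of $a$ with uniformly bounded variances by \cref{Ex:asc23}. Hence $x_n\to x^*$ almost surely for some $x^*\in\fix(T)=\mathop{\rm Argmin} f$.

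I expect the only genuinely delicate step to be the verification of \eqref{Eq:BoundedKappa} for exponents $a\leq\tfrac23$, where the series controlling $\bar\kappa$ diverges; this is bookkeeping rather than a conceptual difficulty — it is absorbed either by the bounded-range case \eqref{Eq:BoundedKappa}(i) or by observing that the offending term decays strictly faster than the dominant term of $B_{n,a}$ — while everything else is a direct translation through the correspondence $x_n-Tx_n=\tfrac{2}{L}\nabla f(x_n)$.
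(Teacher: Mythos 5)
Your overall route is exactly the paper's: identify \eqref{Eq:sgd2} with \eqref{Eq:skm} via $Tx=x-\frac{2}{L}\nabla f(x)$, note $x_n-Tx_n=\frac{2}{L}\nabla f(x_n)$, read the bound off \cref{Thm:RatePower}, and get almost sure convergence for $a\in(\frac{2}{3},1]$ from \cref{Thm:First} together with \cref{Ex:asc23}; your verifications of \eqref{H0}, \eqref{H1}, \eqref{Eq:BoundedVar} and of \eqref{Eq:BoundedKappa}(ii) for $a>\frac{2}{3}$ are correct and are precisely what the paper's one-line proof implicitly relies on.

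The one step that does not work as written is your patch for $\frac{1}{2}\leq a\leq\frac{2}{3}$. Falling back on \eqref{Eq:BoundedKappa}(i) requires $T$ to have bounded range, i.e.\ $\nabla f$ bounded on all of $\RR^d$, which is not among the hypotheses of the \eqref{Eq:sgd2} setting (a convex $f$ with $L$-Lipschitz gradient need not have bounded gradient). Nor can the difficulty be dismissed on the grounds that $\bar\kappa$ "only enters through an already-dominated term": the issue is not the asymptotic size of $\bar\kappa/\sqrt{\pi\tau_n}$ but the existence of a finite $\bar\kappa$ at all — without \eqref{Eq:BoundedKappa} the quantity $B_{n,a}$ is undefined and \cref{Thm:RatePower} cannot be invoked, so no amount of bookkeeping about dominated terms recovers \eqref{eq:gradnorm}. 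The correct reading (and the paper's) is that the corollary inherits \eqref{Eq:BoundedKappa} as a standing assumption through the phrase "$B_{n,a}$ as in \cref{Thm:RatePower}": for $a>\frac{2}{3}$ it is automatic via your computation $S\leq\Var\sum_k\alpha_k^{3/2}<\infty$, while for $a\leq\frac{2}{3}$ one must either assume bounded range (e.g.\ bounded gradients) or otherwise supply a finite $\bar\kappa$; it is not derivable from the stated SGD hypotheses alone.
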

 \begin{proof} The bound \eqref{eq:gradnorm} follows directly from   \cref{Thm:RatePower},
 while almost sure convergence for $a>\frac{2}{3}$ is a consequence of
 \cref{Thm:First} (see also \cref{Ex:asc23}).
 \end{proof}

 As far as we know,  almost  sure convergence of the \eqref{Eq:sgd2} iterates was only proved recently 
 in \cite[Liu and Yuan]{LY18} for $a\in(\frac{2}{3},1)$ and under a weak assumption on the variances 
 ({\em cf.}  \cite[Condition ABC]{LY18}). In the special case of bounded variances, 
 we recover this from \cref{Thm:First},  including also the limit case $a=1$. 
 
 In addition, \cite{LY18} established almost sure asymptotic rates for the optimality gap 
 $f(x_n)-\min f$. As mentioned in \S\ref{Sec:EB}, the pathwise estimate \eqref{Eq:IKMbound1} 
 can also be used to obtain almost sure rates for the gradient norm $\|\nabla f(x_n)\|_2$.
 However, in contrast with the
 error bound in expectation \eqref{eq:gradnorm}, these almost sure rates involve stochastic multiplicative constants 
 for which in general there is no {\em a priori} control, so they do not provide explicit error bounds. In this sense, we note that Markov's inequality
 and \eqref{eq:gradnorm} readily give error bounds that hold with high probability.

 In order to put  \eqref{eq:gradnorm} in perspective, 
 we recall that for stepsizes $\gamma_n= c/n^{a}$ with  $c$ constant,
Bach and Moulines obtained in \cite[Theorem 4]{bm} explicit error bounds for the expected optimality 
gap $\EE\big(f(x_n)-\min f\big)\leq b_{n,a}$. This readily yields
an alternative bound for $\EE(\|\nabla f(x_n)\|_2)$. Indeed, a convex function with $L$-Lipschitz gradient satisfies 
\begin{equation*}
f(x)+\langle\nabla f(x),y-x\rangle+\mbox{$\frac{1}{2L}$}\|\nabla f(y)-\nabla f(x)\|_2^2\leq f(y)\qquad(\forall x,y\in\RR^d),
\end{equation*}
so that taking $y=x_n$ and $x\in\mathop{\rm Argmin} f$ we get $\|\nabla f(x_n)\|_2^2\leq 2L(f(x_n)-\min f)$,
and  Jensen's inequality then yields
 \begin{equation}\label{eq:gradnorm2}
  \EE(\|\nabla f(x_n)\|_2\big)\leq \sqrt{2L\,b_{n,a}}.
  \end{equation}
  
It is noteworthy that both \eqref{eq:gradnorm} and \eqref{eq:gradnorm2} give exactly the same asymptotic rates, 
except for $a=\frac{2}{3}$ where $\sqrt{b_{n,a}}\sim 1/\sqrt[6]{n}$ 
 and $B_{n,a}\sim \log n/\sqrt[6]{n}$ differ by a log factor. 
The coincidence in the asymptotic rates is remarkable since our analysis holds
for general nonexpansive maps and does not exploit the existence of 
 a potential nor the special properties of the Euclidean norm.
 On the other hand, our estimates are slightly more general since \cite{bm} 
 requires stronger assumptions: 
 $\nabla_x F(\cdot,\xi)$ is almost surely Lipschitz with a uniform constant $L$,
 and
the conditional variances $\EE(\|U_n\|_2^2|{\mathcal F}_{n-1})\leq \Var^2$
 are uniformly bounded almost surely. In contrast,
 we only require Lipschitzianity of the expected gradient $\nabla f(\cdot)$ and the weaker 
 condition $\EE(\|U_n\|_2^2)\leq \Var^2$. 
  
The best rates in \cref{eq:gradnorm,eq:gradnorm2} are obtained
for $a=2/3$, with orders  $O(1/\sqrt[6]{n})$ and $O(\log(n)/\sqrt[6]{n})$ respectively. A faster rate $O(1/\sqrt[4]{n})$ 
was obtained in \cite[Ghadimi and Lan]{GL2013} for a randomized version of 
\eqref{Eq:sgd2} which, instead of the last iterate $x_n$, outputs 
a random iterate $\hat x\in\{x_0,\ldots,x_n\}$ selected with
suitable probabilities. 
In the next section \S\ref{Sec:hilbert} we show that, 
for general nonexpansive maps in Euclidean spaces, 
the same improvement holds for a randomized version of \eqref{Eq:skm}. 

The asymptotic rates in gradient norm for stochastic optimization have been  improved  significantly
using more elaborated 
iterations which, however, do not seem to fit into the basic settings of \eqref{Eq:sgd2} nor \eqref{Eq:skm}. 
Up to our knowledge, the 
fastest rate is achieved by the {\sc (sgd{\small 3})} iteration in \cite[Allen-Zhu]{AZ18}
which attains a rate $O(1/\sqrt{n})$.

\section{Some comments on the Euclidean case}\label{Sec:hilbert}

Let us consider the special case where $\|\cdot\|$ is a norm in $\RR^d$ induced by an inner product  $\langle \cdot\,, \cdot \rangle$. 
Denote $R=d(x_0, \fix (T))$ the distance from $x_0$ to $\fix (T)$, and let $\mu$ as in \eqref{H2p} relating 
$\|\cdot\|$ to the standard $L_2$-norm.
By adapting the classical analysis of Browder and Petryshyn \cite{bp66} for the  deterministic Krasnoselskii--Mann iteration, {\em i.e.} when $U_n\equiv 0$ in \eqref{Eq:skm}, we have the following.

\begin{proposition} \label{Prop:HilbertBound}
Consider the iteration \eqref{Eq:skm} in a Euclidean space, and assume \eqref{H1}.  Then, for all $n\in\NN$ we have
\begin{equation}\label{Eq:HilbertBound}
\sum_{k=1}^n \alpha_k (1-\alpha_k)\, \EE \left ( \| Tx_{k-1} -x_{k-1}\|^2\right ) \leq R^2+ \mu^2\sum_{k=1}^{n} \alpha_{k}^2\Var_k^2.
\end{equation}
Equivalently, letting $p^{n}_k=\alpha_k (1-\alpha_k)/\sum_{i=1}^{n}\alpha_i (1-\alpha_i)$ for $k=1,\ldots,n$, and 
selecting a random iterate $\hat x_n\in\{x_0,\ldots,x_{n-1}\}$ with probabilities $\PP(\hat x_n=x_{k-1})=p^{n}_{k}$, we have
\begin{equation}\label{Eq:HilbertBound2}
\EE \left ( \|T\hat x_n -\hat x_n \|^2\right ) \leq \dfrac{R^2+ \mu^2\sum_{k=1}^{n} \alpha_{k}^2\Var_k^2}{\sum_{k=1}^{n} \alpha_k (1-\alpha_k)}.
\end{equation}
\end{proposition}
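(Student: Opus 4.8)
The plan is to adapt the classical Browder--Petryshyn energy estimate to the stochastic setting, working in the inner product norm and tracking the extra noise contribution. First I would fix an arbitrary $x^*\in\fix(T)$ and expand $\|x_n-x^*\|^2$ using the update rule \eqref{Eq:skm}. Writing $x_n=(1-\alpha_n)x_{n-1}+\alpha_n(Tx_{n-1}+U_n)$, I would use the elementary identity for convex combinations in an inner product space,
\begin{equation*}
\|(1-\alpha)u+\alpha v\|^2=(1-\alpha)\|u\|^2+\alpha\|v\|^2-\alpha(1-\alpha)\|u-v\|^2,
\end{equation*}
applied with $u=x_{n-1}-x^*$ and $v=Tx_{n-1}-x^*+U_n$. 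This gives
\begin{equation*}
\|x_n-x^*\|^2=(1-\alpha_n)\|x_{n-1}-x^*\|^2+\alpha_n\|Tx_{n-1}-x^*+U_n\|^2-\alpha_n(1-\alpha_n)\|Tx_{n-1}-x_{n-1}-U_n\|^2.
\end{equation*}

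Next I would take conditional expectation given $\FF_{n-1}$ and use the martingale difference property \eqref{H1}, namely $\escon{U_n}{\FF_{n-1}}=0$, so that cross terms $\langle Tx_{n-1}-x^*,U_n\rangle$ and $\langle Tx_{n-1}-x_{n-1},U_n\rangle$ vanish in conditional expectation, leaving $\escon{\|Tx_{n-1}-x^*+U_n\|^2}{\FF_{n-1}}=\|Tx_{n-1}-x^*\|^2+\escon{\|U_n\|^2}{\FF_{n-1}}$ and similarly $\escon{\|Tx_{n-1}-x_{n-1}-U_n\|^2}{\FF_{n-1}}=\|Tx_{n-1}-x_{n-1}\|^2+\escon{\|U_n\|^2}{\FF_{n-1}}$. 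Since $T$ is nonexpansive, $\|Tx_{n-1}-x^*\|=\|Tx_{n-1}-Tx^*\|\leq\|x_{n-1}-x^*\|$, so the first two terms combine to something $\leq\|x_{n-1}-x^*\|^2$. Taking total expectations and rearranging yields
\begin{equation*}
\alpha_n(1-\alpha_n)\EE(\|Tx_{n-1}-x_{n-1}\|^2)\leq \EE(\|x_{n-1}-x^*\|^2)-\EE(\|x_n-x^*\|^2)+\alpha_n\,\EE(\|U_n\|^2).
\end{equation*}
Here I must be careful about norms: the variance hypothesis is stated for the Euclidean $L_2$-norm via $\Var_n^2=\EE(\|U_n\|_2^2)$, so I would use \eqref{H2p} to bound $\EE(\|U_n\|^2)\leq\mu^2\Var_n^2$; actually a cleaner route is to note that $\alpha_n(1-\alpha_n)\leq\alpha_n$ and absorb the factor, but one should double-check whether the $\alpha_n$ or $\alpha_n^2$ appears — the stated bound has $\alpha_k^2\Var_k^2$, which suggests the noise term should come out as $\alpha_n^2\Var_n^2$ after also handling the $-\alpha_n(1-\alpha_n)\escon{\|U_n\|^2}{\FF_{n-1}}$ term, which partially cancels the $+\alpha_n\escon{\|U_n\|^2}{\FF_{n-1}}$ leaving exactly $\alpha_n^2\escon{\|U_n\|^2}{\FF_{n-1}}$. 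Reconciling that cancellation carefully is the one spot that needs attention.

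Then I would sum the telescoping inequality over $k=1,\dots,n$, using $\EE(\|x_0-x^*\|^2)=\|x_0-x^*\|^2$ and dropping the nonnegative final term $\EE(\|x_n-x^*\|^2)\geq 0$, to obtain
\begin{equation*}
\sum_{k=1}^n\alpha_k(1-\alpha_k)\EE(\|Tx_{k-1}-x_{k-1}\|^2)\leq\|x_0-x^*\|^2+\mu^2\sum_{k=1}^n\alpha_k^2\Var_k^2.
\end{equation*}
Since $x^*\in\fix(T)$ is arbitrary, taking the infimum over $x^*$ replaces $\|x_0-x^*\|^2$ by $R^2=d(x_0,\fix(T))^2$, giving \eqref{Eq:HilbertBound}. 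Finally, \eqref{Eq:HilbertBound2} is immediate: the normalized weights $p^n_k$ are nonnegative and sum to one, so $\EE(\|T\hat x_n-\hat x_n\|^2)=\sum_{k=1}^n p^n_k\,\EE(\|Tx_{k-1}-x_{k-1}\|^2)$, which equals the left side of \eqref{Eq:HilbertBound} divided by $\sum_{k=1}^n\alpha_k(1-\alpha_k)$, and the claimed bound follows by dividing through. The main obstacle, as noted, is the precise bookkeeping of the noise cross terms and the partial cancellation that turns $\alpha_n$ into $\alpha_n^2$ in the final estimate; everything else is routine telescoping and the classical convex-combination identity.
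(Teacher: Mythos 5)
Your proposal is correct and follows essentially the same route as the paper's proof: the convex-combination (parallelogram) identity, killing the cross terms via $\escon{U_k}{\FF_{k-1}}=0$, nonexpansivity, telescoping, and then projecting onto (or taking the infimum over) $\fix(T)$ to get $R^2$, with \eqref{H2p} converting $\EE(\|U_k\|^2)$ to $\mu^2\Var_k^2$. The cancellation you flag is exactly how the paper proceeds, since keeping the noise part of the term $-\alpha_k(1-\alpha_k)\EE(\|Tx_{k-1}-x_{k-1}+U_k\|^2)$ turns $\alpha_k\EE(\|U_k\|^2)-\alpha_k(1-\alpha_k)\EE(\|U_k\|^2)$ into exactly $\alpha_k^2\EE(\|U_k\|^2)$, so there is no gap.
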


\begin{proof} 
Let $\zeta_k=\|x_k-y_0\|$  with $y_0$ the projection of $x_0$ onto $\fix (T)$. 
Using the parallelogram rule we obtain
\begin{eqnarray*}
\zeta_k^2 &=&\| (1\!-\!\alpha_k)x_{k-1}+ \alpha_k (Tx_{k-1} +U_k) -y_0\|^2\\
&=&  (1\!-\!\alpha_k)\zeta_{k-1}^2+ \alpha_k \|Tx_{k-1} - Ty_0 + U_k\|^2- \!\alpha_k(1\!-\!\alpha_k) \|Tx_{k-1} - x_{k-1}+U_k\|^2.
\end{eqnarray*}
Since  $Tx_{k-1} $ is $\mathcal F_{k-1}$-measurable and $\EE(U_k|\mathcal F_{k-1})=0$,
 using the law of total expectation and the nonexpansivity of $T$, it follows that
 \begin{equation*}
\begin{aligned}
 \EE\left(\|Tx_{k-1} - Ty_0 + U_k\|^2\right)~&=~ \EE\left(\|Tx_{k-1} - Ty_0\|^2 + 2 \langle Tx_{k-1} - Ty_0, U_k \rangle + \|U_k\|^2\right)\\
 &\leq~\EE \left ( \zeta_{k-1}^2\right ) +\EE\left(\|U_k\|^2\right),
 \end{aligned}
\end{equation*}
and similarly
\[
\EE \left (  \|Tx_{k-1} \!-\! x_{k-1} \!+\! U_k\|^2 \right )\!=\!  \EE \left ( \|Tx_{k-1}\! -\! x_{k-1}\|^2 \right ) \!+\! \EE\left(\|U_k\|^2\right).
\]
Combining these inequalities, we deduce
\[
\EE \left ( \zeta_k^2\right )  \leq 
\EE \left (  \zeta_{k-1}^2 \right ) +  \alpha_k^2\,\EE\left(\|U_k\|^2\right) -  \alpha_k(1-\alpha_k)\,\EE \left ( \|Tx_{k-1} - x_{k-1}\|^2 \right ),
\]
and then \eqref{Eq:HilbertBound} follows by telescoping and using the bound $\EE(\|U_k\|^2)\leq\mu^2\Var_k^2$. 

\end{proof}
\begin{remark} 
Note that  \eqref{Eq:HilbertBound2} implies a bound for the minimum expected residual
\[
\min_{k=0,\ldots, n-1} \EE \left ( \|Tx_k -x_k \|^2\right ) \leq \dfrac{R^2+ \mu^2\sum_{k=1}^{n} \alpha_{k}^2\Var_k^2}{\sum_{k=1}^{n} \alpha_k (1-\alpha_k)},
\]
however detecting the $k$ at which the minimum is attained would require to compute the expected values, which is not possible in general. In contrast, the random output $\hat x_n$ is easily  implemented.
\end{remark}

\begin{remark} 
In the deterministic case we have that $\norm{x_n -Tx_n}$ is nonincreasing, and one can obtain a bound for the last-iterate residual, namely
\[
  \|Tx_n -x_n \|^2 \leq \frac{R^2}{\sum_{k=1}^{n+1} \alpha_k (1-\alpha_k)}.
\]
However, in the stochastic case it seems unlikely to have $\EE \left ( \| x_n - Tx_n\|^2 \right )$
decreasing so  the error bound
\eqref{Eq:HilbertBound2} is stated for the random iterate $\hat x_n$. 
\end{remark}

In order to compare these  Euclidean space estimates with our previous error bounds for the expected fixed-point residual (instead of its square), we can apply Jensen's inequality to \eqref{Eq:HilbertBound2} to obtain
\begin{equation}\label{Eq:HilbertBound3}
\EE \left ( \|T\hat x_n -\hat x_n \|\right ) \leq \sqrt{\dfrac{R^2+ \mu^2\sum_{k=1}^{n} \alpha_{k}^2\Var_k^2}{\sum_{k=1}^{n} \alpha_k (1-\alpha_k)}}.
\end{equation}
We conclude by analyzing the case of bounded variances with
constant stepsizes and stepsizes of power form. 
\begin{corollary}\label{CorHilbertBnd0}
 Assume  \eqref{H1} and \eqref{Eq:BoundedVar}.
Consider \eqref{Eq:skm} in a  Euclidean space over a fixed horizon $n_0$ with constant stepsizes $\alpha_n\equiv \alpha={1}/{\sqrt{n_0+1}}$.
Let $\hat x_{n_0}$ be a random iterate  as in  \cref{Prop:HilbertBound}. Then,
 \begin{equation}\label{EQ:HilbertBnd}
\EE \left ( \|T\hat x_{n_0} -\hat x_{n_0} \|\right ) \leq  \dfrac{\sqrt{R^2+\mu^2\Var^2}}{\sqrt{\tau_1(1/2)}\;\sqrt[4]{n_0}}.
\end{equation}

\end{corollary}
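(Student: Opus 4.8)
The plan is to specialize the general Euclidean estimate \eqref{Eq:HilbertBound3} of \cref{Prop:HilbertBound} to the constant stepsize $\alpha_n\equiv\alpha=1/\sqrt{n_0+1}$ under the bounded-variance hypothesis \eqref{Eq:BoundedVar}, and then to reduce the claim to a single elementary scalar inequality. Since $\hat x_{n_0}$ is exactly the random iterate constructed in \cref{Prop:HilbertBound}, the bound \eqref{Eq:HilbertBound3} applies verbatim and is the only input needed.

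First I would estimate the numerator in \eqref{Eq:HilbertBound3}: using $\Var_k\leq\Var$ together with $\alpha^2=1/(n_0+1)$ gives $R^2+\mu^2\sum_{k=1}^{n_0}\alpha^2\Var_k^2\leq R^2+\mu^2\Var^2\,n_0\alpha^2=R^2+\mu^2\Var^2\,\frac{n_0}{n_0+1}\leq R^2+\mu^2\Var^2$. The denominator is simply $\sum_{k=1}^{n_0}\alpha(1-\alpha)=n_0\,\alpha(1-\alpha)$, so \eqref{Eq:HilbertBound3} becomes
\[
\EE\big(\|T\hat x_{n_0}-\hat x_{n_0}\|\big)\leq\sqrt{\frac{R^2+\mu^2\Var^2}{n_0\,\alpha(1-\alpha)}}.
\]
It then remains to show $n_0\,\alpha(1-\alpha)\geq\tau_1(1/2)\,\sqrt{n_0}$, equivalently $\sqrt{n_0}\,\alpha(1-\alpha)\geq\tau_1(1/2)$, because substituting this lower bound into the display above produces exactly \eqref{EQ:HilbertBnd}.

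For this last inequality I would split the product into its two factors: $\sqrt{n_0}\,\alpha(1-\alpha)=\frac{\sqrt{n_0}}{\sqrt{n_0+1}}\big(1-\frac{1}{\sqrt{n_0+1}}\big)$, where $\frac{\sqrt{n_0}}{\sqrt{n_0+1}}=\sqrt{1-\frac{1}{n_0+1}}$ and $1-\frac{1}{\sqrt{n_0+1}}$ are both nondecreasing in $n_0$; hence for $n_0\geq 1$ each is at least its value at $n_0=1$, giving $\sqrt{n_0}\,\alpha(1-\alpha)\geq\frac{1}{\sqrt2}\big(1-\frac{1}{\sqrt2}\big)$, which is precisely $\tau_1(1/2)$ by the definition \eqref{eq:taua} at $n=1$, $a=1/2$. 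This monotonicity observation — that the worst case is $n_0=1$ rather than $n_0\to\infty$ — is the only step with any content, and it is completely elementary; everything else is bookkeeping. (If one prefers a calculus argument, writing $t=\sqrt{n_0+1}\in[\sqrt2,\infty)$ one has $\sqrt{n_0}\,\alpha(1-\alpha)=g(t)$ with $g(t)=(t-1)^{3/2}(t+1)^{1/2}/t^2$, and clearing denominators in $(\ln g)'$ shows its sign is that of $2t+4>0$, so $g$ increases and its minimum is $g(\sqrt2)=(\sqrt2-1)/2=\tau_1(1/2)$.)
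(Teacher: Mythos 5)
Your proof is correct and follows the route the paper intends: the corollary is stated without proof precisely because it is the specialization of \eqref{Eq:HilbertBound3} to $\alpha_k\equiv 1/\sqrt{n_0+1}$, bounding the numerator by $R^2+\mu^2\Var^2$ and the denominator $n_0\,\alpha(1-\alpha)$ from below by $\tau_1(1/2)\sqrt{n_0}$. Your monotonicity argument showing $\sqrt{n_0}\,\alpha(1-\alpha)\geq\tau_1(1/2)$ with equality at $n_0=1$ is exactly the elementary verification the paper leaves implicit, and both your factorwise and calculus checks are sound.
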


As in \cref{Thm:Rate_Expectation_constant} one can find the optimal constant stepsize $\alpha$ for a fixed horizon $n_0$, 
which  yields a similar error bound with a slightly smaller constant.
Note also that the rate $O(1/\sqrt[4]{n_0})$ in \eqref{EQ:HilbertBnd} is faster than
the $O(1/\sqrt[6]{n_0})$ from 
 \cref{Thm:Rate_Expectation_constant}.
However, these results are not directly comparable. Indeed,  \cref{Thm:Rate_Expectation_constant} 
 holds for the last 
iterate $x_{n_0}$ and applies to general normed spaces but under \eqref{Eq:BoundedKappa}, whereas 
\cref{CorHilbertBnd0} is valid for the random iterate $\hat x_{n_0}$ and 
in  Euclidean spaces although it only requires $\fix (T)$ to be nonempty.

For stepsizes of power form we get the next  estimate, with $\tau_n(a)$ as in \eqref{eq:taua}.

\begin{corollary}\label{corEq:HBP}
Assume  \eqref{H1} and \eqref{Eq:BoundedVar}.
Let us consider the  iteration \eqref{Eq:skm} in a Euclidean space, for $\alpha_n={1}/{(n+1)^a}$ with $\frac{1}{2}\leq a\leq 1$.
Let $\hat x_n$ be a random iterate as in  \cref{Prop:HilbertBound}. 
Then,
\begin{equation}\label{Eq:HBP}
\EE \left ( \|T\hat x_n -\hat x_n \|\right ) \leq
\left\{\begin{array}{ll} 
\dfrac{\sqrt{R^2+ \mu^2\Var^2\,\ln{(n+1)}}}{\sqrt{\tau_1(1/2)}\;\sqrt[4]{n}} & \text{if }~ a=\frac{1}{2},\\[3ex]
\dfrac{\sqrt{R^2+\mu^2\Var^2/(2a-1)}}{\sqrt{\tau_1(a)}\;n^{(1-a)/2}}& \text{if }~ \frac{1}{2}<a<1,\\[3ex]
\dfrac{2\sqrt{\ln 2}\;\sqrt{R^2+ \mu^2\Var^2}}{\sqrt{\ln{(n+1)}}} & \text{if }~ a=1.
\end{array}
\right.
\end{equation}
\end{corollary}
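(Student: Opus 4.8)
The plan is to extract everything from the Euclidean estimate \eqref{Eq:HilbertBound3}. Under \eqref{Eq:BoundedVar} we have $\Var_k\le\Var$ for all $k$, so, abbreviating $A_n(a)\triangleq\sum_{k=1}^{n}\alpha_k^2=\sum_{k=1}^{n}(k+1)^{-2a}$ and recalling $\tau_n(a)=\sum_{k=1}^{n}\alpha_k(1-\alpha_k)$ from \eqref{eq:taua}, the bound \eqref{Eq:HilbertBound3} reads
\[
\EE\big(\|T\hat x_n-\hat x_n\|\big)\ \le\ \sqrt{\frac{R^2+\mu^2\Var^2\,A_n(a)}{\tau_n(a)}}.
\]
Hence the whole statement reduces to an \emph{upper} bound for $A_n(a)$ and a \emph{lower} bound for $\tau_n(a)$, each treated separately in the three regimes $a=\tfrac12$, $\tfrac12<a<1$ and $a=1$.

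For $A_n(a)$ I would use the elementary comparison $(k+1)^{-2a}\le\int_k^{k+1}x^{-2a}\,dx$ together with $A_n(a)=\sum_{j=2}^{n+1}j^{-2a}$; this gives $A_n(\tfrac12)\le\ln(n+1)$, and $A_n(a)\le\int_1^{\infty}x^{-2a}\,dx=\tfrac1{2a-1}$ when $\tfrac12<a<1$, while for $a=1$ the crude bound $A_n(1)\le\sum_{j\ge2}j^{-2}<1$ already suffices (which is why no extra constant appears on that line of \eqref{Eq:HBP}). The substantive part is the lower bound, for which I propose to prove by induction on $n$ the inequality
\[
\tau_n(a)\ \ge\ \tau_1(a)\,\Psi_a(n),\qquad \Psi_a(n)=\begin{cases}n^{1-a}&\tfrac12\le a<1,\\[0.5ex]\log_2(n+1)&a=1.\end{cases}
\]
The base case $n=1$ is an equality since $\Psi_a(1)=1$ throughout; the inductive step compares the increment $\tau_n(a)-\tau_{n-1}(a)=(n+1)^{-a}\bigl(1-(n+1)^{-a}\bigr)$ with $\tau_1(a)\bigl(\Psi_a(n)-\Psi_a(n-1)\bigr)$, and bounds the latter using concavity of $\Psi_a$ by $\tau_1(a)\,\Psi_a'(n-1)$. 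Substituting the two estimates back, and using $\tfrac14\log_2(n+1)=\ln(n+1)/(4\ln2)$ when $a=1$, reproduces exactly the three bounds in \eqref{Eq:HBP}.

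The main obstacle is closing this induction with the sharp constant $\tau_1(a)=2^{-a}(1-2^{-a})$ for every $n$: although $\tau_n(a)$ visibly has the right power of $n$ (indeed $\tau_n(a)\approx\int_1^{n}x^{-a}\,dx$), making the $n=1$ case an exact equality so that the induction starts with no slack forces one to verify a one-parameter family of elementary inequalities of the shape $\tau_1(a)\,\Psi_a'(n-1)\le(n+1)^{-a}\bigl(1-(n+1)^{-a}\bigr)$, uniformly in $n\ge2$ and in $a$ over the relevant interval. This is routine---after dividing through by $(n+1)^{-a}$ the left-hand side is decreasing and the right-hand side increasing in $n$, so the binding case is the first inductive step $n=2$, where it collapses to $(\tfrac32)^{a}(1-2^{-a})(1-a)\le1-3^{-a}$ (resp.\ $(n+1)^2\le 4n^2\ln2$ for $a=1$), which holds with room to spare---but it does require the case split. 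Alternatively one can bypass this step entirely by invoking the estimates on $\tau_n(a)$ already developed in \cref{appendix1} and used elsewhere in the paper, at the price of slightly weaker explicit constants.
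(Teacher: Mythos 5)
Your proposal is correct and takes essentially the same route as the paper: the paper likewise plugs the integral upper bound on $\sum_{k=1}^n\alpha_k^2$ and the lower bounds $\tau_n(a)\geq\tau_1(a)\,n^{1-a}$ (resp.\ $\tau_n(1)\geq\ln(n+1)/(4\ln 2)$) into \eqref{Eq:HilbertBound3}, the latter being proved by induction in \cref{Lemma:L41}. The only divergence is how the inductive increment inequality is closed---the paper shows the quotient $\bigl(1-(n+1)^{-a}\bigr)/\bigl[(n+1)^a\bigl(n^{1-a}-(n-1)^{1-a}\bigr)\bigr]$ is increasing from its value $\tau_1(a)$ at $n=1$, whereas you linearize $\Psi_a$ by concavity and reduce to the binding case $n=2$---and both verifications go through.
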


\begin{proof} 
This follows from \eqref{Eq:HilbertBound3} by combining the simple integral upper estimate 
$$\sum_{k=1}^{n} \alpha_k^2=\sum_{k=1}^{n} \frac{1}{(k+1)^{2a}}\leq\int_1^{n+1}\frac{1}{x^{2a}}dx\leq\left\{\begin{array}{cl}
{1}/{(2a-1)}&\mbox{\rm if } \frac{1}{2}<\alpha\leq 1,\\
\ln (n+1)&\mbox{\rm if } \alpha=\frac{1}{2},
\end{array}\right.$$
with the lower estimates $\tau_n(a)\geq\tau_1(a)\; n^{1-a}$ for $\frac{1}{2}\leq a<1$, and $\tau_n(1)\geq \frac{1}{4\ln 2}\,\ln (n+1)$ for $a=1$, established in  \cref{Lemma:L41} in the \cref{appendix1}.
\end{proof}

Note that for $a=\frac{2}{3}$ the bound 
\eqref{Eq:HBP} is of order $O(1/\sqrt[6]{n})$, which is slightly faster than the rate $O(\ln(n)/\sqrt[6]{n})$ in  \cref{Thm:RatePower}, while for $\frac{2}{3}<a\leq 1$ both analysis yield the same rate $O(1/n^{(1-a)/2})$.
On the other hand,
while the rates in  \cref{Thm:RatePower}
deteriorate for $a<\frac{2}{3}$, in the Euclidean setting of 
 \cref{corEq:HBP} the rates improve
attaining an order
$O(\ln(n)/\sqrt[4]{n})$ for $a=\frac{1}{2}$.
 However, as mentioned before, these bounds apply in different settings: the random iterate $\hat x_n$
and Euclidean spaces for \eqref{Eq:HBP}, and the last iterate
$x_n$ and general normed spaces in  \cref{Thm:RatePower}. 
  \newpage
\section*{Acknowledgments}
We are deeply indebted to 4 reviewers for their 
insightful comments on the original manuscript which prompted us to enhance 
the scope of the paper, notably by including the  application to RVI-$Q$-learning. 
We also warmly thank our friend and colleague Crist\'obal Guzm\'an for stimulating discussions on this topic and, in particular,  
for pointing out the interpretation of \eqref{Eq:HilbertBound} 
as an expected value for a random selection of the iterates,
and the resulting rate $O(1/\sqrt[4]{n_0})$ for constant stepsizes.

\bibliographystyle{ims}
\bibliography{ref_bc}

\begin{thebibliography}{29}
\expandafter\ifx\csname natexlab\endcsname\relax\def\natexlab#1{#1}\fi
\expandafter\ifx\csname url\endcsname\relax
  \def\url#1{\texttt{#1}}\fi
\expandafter\ifx\csname urlprefix\endcsname\relax\def\urlprefix{URL }\fi
\providecommand{\eprint}[2][]{\url{#2}}

\bibitem[{Abounadi et~al.(2001)Abounadi, Bertsekas and Borkar}]{abb1}
\textsc{Abounadi, J.}, \textsc{Bertsekas, D.} and \textsc{Borkar, {\relax
  V.S}.} (2001).
\newblock Learning algorithms for markov decision processes with average cost.
\newblock \textit{SIAM J. Control Optim.}, \textbf{40} 681--698.

\bibitem[{Abounadi et~al.(2002)Abounadi, Bertsekas and Borkar}]{abb2}
\textsc{Abounadi, J.}, \textsc{Bertsekas, D.} and \textsc{Borkar, {\relax
  V.S.}.} (2002).
\newblock Stochastic approximation for nonexpansive maps: application to
  {$Q$}-learning algorithms.
\newblock \textit{SIAM J. Control Optim.}, \textbf{41} 1--22.

\bibitem[{Allen-Zhu(2018)}]{AZ18}
\textsc{Allen-Zhu, Z.} (2018).
\newblock How to make the gradients small stochastically: Even faster convex
  and nonconvex {SGD}.
\newblock \textit{In: Advances in Neural Information Processing Systems 31}.

\bibitem[{Bach and Moulines(2011)}]{bm}
\textsc{Bach, F.} and \textsc{Moulines, E.} (2011).
\newblock Non-asymptotic analysis of stochastic approximation algorithms for
  machine learning.
\newblock \textit{In: Advances in Neural Information Processing Systems 24}.

\bibitem[{Baillon and Haddad(1977)}]{bh}
\textsc{Baillon, J.-B.} and \textsc{Haddad, G.} (1977).
\newblock Quelques propri{\'e}t{\'e}s des op{\'e}rateurs angle-born{\'e}s et
  {$n$}-cycliquement monotones.
\newblock \textit{Israel J. Math.}, \textbf{26} 137--150.

\bibitem[{Bena{\"{\i}}m(1999)}]{Benaim99}
\textsc{Bena{\"{\i}}m, M.} (1999).
\newblock Dynamics of stochastic approximation algorithms.
\newblock In \textit{S\'eminaire de {P}robabilit\'es, {XXXIII}}, vol. 1709 of
  \textit{Lecture Notes in Math.} Springer, Berlin, 1--68.

\bibitem[{Bena{\"{\i}}m and Faure(2012)}]{BF12}
\textsc{Bena{\"{\i}}m, M.} and \textsc{Faure, M.} (2012).
\newblock Stochastic approximations, cooperative dynamics and supermodular
  games.
\newblock \textit{Ann. Appl. Probab.}, \textbf{22} 2133--2164.

\bibitem[{Bravo et~al.(2019)Bravo, Cominetti and Pavez-Sign{\'e}}]{bcp}
\textsc{Bravo, M.}, \textsc{Cominetti, R.} and \textsc{Pavez-Sign{\'e}, M.}
  (2019).
\newblock {Rates of convergence for inexact Krasnosel'skii-Mann iterations in
  Banach spaces}.
\newblock \textit{Math. Program.}, \textbf{175} 241--262.

\bibitem[{Browder and Petryshyn(1967)}]{bp66}
\textsc{Browder, {\relax F.E}.} and \textsc{Petryshyn, {\relax W.V}.} (1967).
\newblock Construction of fixed points of nonlinear mappings in {{H}}ilbert
  space.
\newblock \textit{J. Math. Anal. Appl.}, \textbf{20} 197--228.

\bibitem[{Chen(2002)}]{chen}
\textsc{Chen, H.} (2002).
\newblock \textit{{Stochastic Approximation and Its Applications}}.
\newblock Kluwer Academic Publishers, Dordrecht, Netherlands.

\bibitem[{Combettes and Pesquet(2015)}]{cp1}
\textsc{Combettes, {\relax P.L}.} and \textsc{Pesquet, {\relax J.C}.} (2015).
\newblock Stochastic quasi-{F}e{j\'e}r block-coordinate fixed point iterations
  with random sweeping.
\newblock \textit{SIAM J. Optim.}, \textbf{25} 1221--1248.

\bibitem[{Combettes and Pesquet(2016)}]{cp3}
\textsc{Combettes, {\relax P.L}.} and \textsc{Pesquet, {\relax J.C}.} (2016).
\newblock Stochastic approximations and perturbations in forward-backward
  splitting for monotone operators.
\newblock \textit{Pure Appl. Funct. Anal.}, \textbf{1} 13--37.

\bibitem[{Duff(1995)}]{Duff1995}
\textsc{Duff, M.} (1995).
\newblock Q-learning for bandit problems.
\newblock In \textit{Machine Learning Proceedings 1995} (A.~Prieditis and
  S.~Russell, eds.). Morgan Kaufmann, 209--217.

\bibitem[{Ghadimi and Lan(2013)}]{GL2013}
\textsc{Ghadimi, S.} and \textsc{Lan, G.} (2013).
\newblock Stochastic first- and zeroth-order methods for nonconvex stochastic
  programming.
\newblock \textit{SIAM Journal on Optimization}, \textbf{23} 2341--2368.

\bibitem[{Iusem et~al.(2019)Iusem, Jofr{\'e}, Oliveira and Thompson}]{ijot}
\textsc{Iusem, {\relax A.N}.}, \textsc{Jofr{\'e}, A.}, \textsc{Oliveira, R.}
  and \textsc{Thompson, P.} (2019).
\newblock Variance-based stochastic extragradient methods with linear search
  for stochastic variational inequalities.
\newblock \textit{SIAM J. Control Optim.}, \textbf{29} 175--206.

\bibitem[{Jaakola et~al.(1994)Jaakola, Jordan and Singh}]{jjs}
\textsc{Jaakola, T.}, \textsc{Jordan, {\relax M.I.}.} and \textsc{Singh,
  {\relax S.P.}.} (1994).
\newblock On the convergence of stochastic iterative dynamic programming
  algorithms.
\newblock \textit{Neural Comput.}, \textbf{6} 1185--1201.

\bibitem[{Jofr{\'e} and Thompson(2019)}]{jt}
\textsc{Jofr{\'e}, A.} and \textsc{Thompson, P.} (2019).
\newblock On variance reduction for stochastic smooth convex optimization with
  multiplicative noise.
\newblock \textit{Math. Program.}, \textbf{174} 253--292.

\bibitem[{Kiefer and Wolfowitz(1952)}]{kw}
\textsc{Kiefer, J.} and \textsc{Wolfowitz, J.} (1952).
\newblock Stochastic estimation of the maximum of a regression function.
\newblock \textit{Ann. Math. Statist.}, \textbf{23} 462--466.

\bibitem[{Kushner and Yin(2003)}]{ky}
\textsc{Kushner, {\relax H.J}.} and \textsc{Yin, G.} (2003).
\newblock \textit{{Stochastic Approximation and Recursive Algorithms and
  Applications}}.
\newblock Springer- Verlag, New York.

\bibitem[{Liu and Yuan(2022)}]{LY18}
\textsc{Liu, J.} and \textsc{Yuan, Y.} (2022).
\newblock On almost sure convergence rates of stochastic gradient methods.
\newblock In \textit{35th Annual Conference on Learning Theory} (M.~R.
  Po-Ling~Loh, ed.), vol. 178 of \textit{Proceedings of Machine Learning
  Research}. 1--21.

\bibitem[{Nemirovskii et~al.(2009)Nemirovskii, Juditskii, Lan and
  Shapiro}]{nem}
\textsc{Nemirovskii, A.}, \textsc{Juditskii, A.}, \textsc{Lan, G.} and
  \textsc{Shapiro, A.} (2009).
\newblock Robust stochastic approximation approach to stochastic programming.
\newblock \textit{SIAM J. Optim.}, \textbf{19} 1574--1609.

\bibitem[{Polyak and Juditsky(1992)}]{py}
\textsc{Polyak, {\relax B.T}.} and \textsc{Juditsky, {\relax A.B}.} (1992).
\newblock Acceleration of stochastic approximation by averaging.
\newblock \textit{SIAM J. Control Optim.}, \textbf{30} 838--855.

\bibitem[{Puterman(1994, 2005)}]{Put2005}
\textsc{Puterman, M.} (1994, 2005).
\newblock \textit{Markov Decision Processes: Discrete Stochastic Dynamic
  Programming}.
\newblock John Wiley and Sons, Hoboken, New Jersey.

\bibitem[{Robbins and Siegmund(1971)}]{rs}
\textsc{Robbins, H.} and \textsc{Siegmund, D.} (1971).
\newblock A convergence theorem for non-negative almost supermartingales and
  some applications.
\newblock \textit{In: Optimization Methods and Statistics, J.S. Rustagi, ed.}
  233--257.

\bibitem[{Robins and Monro(1951)}]{rbm}
\textsc{Robins, H.} and \textsc{Monro, S.} (1951).
\newblock A stochastic approximation method.
\newblock \textit{Ann. Math. Statist.}, \textbf{22} 400--407.

\bibitem[{Rosasco et~al.(2016)Rosasco, Villa and Vu}]{rvv}
\textsc{Rosasco, {\relax L}.}, \textsc{Villa, {\relax S}.} and \textsc{Vu,
  {\relax B.C}.} (2016).
\newblock Stochastic forward-backward splitting for monotone inclusions.
\newblock \textit{J. Optim. Theory Appl.}, \textbf{169} 388--406.

\bibitem[{Sutton and Barto(2018)}]{SB2018}
\textsc{Sutton, R.} and \textsc{Barto, A.} (2018).
\newblock \textit{Reinforcement Learning}.
\newblock 2nd ed. The MIT Press, Cambridge, Massachusetts.

\bibitem[{Tsitsiklis(1994)}]{tsi}
\textsc{Tsitsiklis, {\relax J.N}.} (1994).
\newblock Asynchronous stochastic approximation and {Q}-learning.
\newblock \textit{Mach. Learn.}, \textbf{16} 185--202.

\bibitem[{Wan et~al.(2021)Wan, Naik and Sutton}]{WNS2021}
\textsc{Wan, Y.}, \textsc{Naik, A.} and \textsc{Sutton, R.} (2021).
\newblock Learning and planning in average-reward markov decision processes.
\newblock In \textit{Proceedings 38th International Conference on Machine
  Learning} (M.~Meila and T.~Zhang, eds.), vol. 139 of \textit{Proc. Machine
  Learning Research}. 10653--10662.

\end{thebibliography}

 \appendix

\section{ Proof of  \texorpdfstring{  \cref{Thm:RatePower}}{Lg}}\label{appendix00}

In this Appendix we establish explicit error bounds in the case of  stepsizes of power form $\alpha_n=1/(n+1)^a$ with $\frac{1}{2}\leq a\leq 1$
and bounded variances $\Var_n\leq\Var$.
We adopt the notations in  \eqref{eq:power}, and for brevity we just write $\tau_n$ for the expression $\tau_n(a)$ given by \cref{eq:taua}.
The analysis is split into three Lemmas that deal with different ranges for the exponent $a$,  and which are based on some additional technical results presented in the next \cref{appendix1}. These error bounds are then used to prove \cref{Thm:RatePower}.
 
We first deal with the case $a<1$. In view of \cref{Lemma:kappa} we can use the estimate  \eqref{Eq:cotaalpha} and
apply the integral bound \eqref{Eq:RateExpected2} with 
 $$h(x)=\Var\,\mbox{$(\lambda_a+(1-a) x)^{-\frac{a}{2 (1-a)}}.$}$$ 
 The corresponding integral in $B_n$ can be expressed in closed form in terms of Gauss' hypergeometric function 
 $_2F_1(a,b;c;z)$,
 which yields the following error bound. 
 
\begin{proposition}\label{Cor:ErrorBoundPower} Assume \eqref{H1} and \eqref{Eq:BoundedKappa}.
Let $\alpha_n={1}/{(n\!+\!1)^a}$ with $\frac{1}{2}\leq a<1$, and denote $b=\frac{a}{2 (1-a)}$ and $z_{n}= \frac{(1-a)(\tau _n-\tau _1)}{\lambda_a+(1-a) \tau _n}$. Then
 $\EE(\|x_n-Tx_n\|)\leq\! B_{n,a}$ with
\begin{equation*}
B_{n,a}\triangleq \mbox{ \large
$\frac{\bar\kappa}{\sqrt{\pi\tau_n}}+
\frac{4\normeq \Var}{\sqrt{\pi}}\frac{\gamma_a\sqrt{\tau_n -\tau_1}}{ (\lambda_a +(1-a)\tau_n)^{b}} $}\, \mbox{\normalsize $_2F_1\!\left(b,\frac{1}{2};\frac{3}{2};z_{n}\right)$}+\mbox{\small $2 \normeq \Var \,(\alpha_n\!+\!2)\sqrt{\alpha_{n}}$}.
\end{equation*}
\end{proposition}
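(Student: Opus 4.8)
The plan is to invoke \cref{Thm:convolution_bound} with the constant $\gamma=\gamma_a$ from \eqref{eq:gamma_ag} and with the decreasing convex function
\[
h(x)=\Var\,\bigl(\lambda_a+(1-a)x\bigr)^{-b},\qquad b=\frac{a}{2(1-a)},
\]
and then to evaluate in closed form the convolution integral appearing in \eqref{Eq:RateExpected2}, using the Euler integral representation of Gauss's hypergeometric function. The purpose of the correction constant $\lambda_a$ of \eqref{eq:power} is to make the two conditions \eqref{eq:convol} hold exactly for all $k\ge2$, and not merely asymptotically.

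First I would verify the hypotheses of \cref{Thm:convolution_bound}. Conditions \eqref{H1} and \eqref{Eq:BoundedKappa} are assumed outright. Condition \eqref{eq:convol}(b) holds with $\gamma=\gamma_a$ by the definition \eqref{eq:gamma_ag}, and $\gamma_a$ is a finite number $\ge1$ because the ratio of consecutive values $\alpha_k(1-\alpha_k)$ is continuous in $k$ and converges to $1$. For \eqref{eq:convol}(a), I would combine the uniform bound \eqref{Eq:BoundedVar} with \cref{Lemma:L3} of \cref{appendix1} to get $\sumprob_{k-1}\le\Var\sqrt{\alpha_k}$, and then apply the estimate \eqref{Eq:cotaalpha} of \cref{Lemma:kappa}, whose constant $\lambda_a$ is calibrated precisely so that $\Var\sqrt{\alpha_k}\le(1-\alpha_k)\,h(\tau_k)$ for every $k\ge2$. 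It remains to note that $h$ is positive on $(0,\infty)$ since $\lambda_a>0$, that it is decreasing and of class $C^2$ there because $x\mapsto\lambda_a+(1-a)x$ is positive and increasing, and that it is convex since $h''(x)=\Var\,b(b+1)(1-a)^2(\lambda_a+(1-a)x)^{-b-2}>0$; should $h$ fail to be globally well-behaved in some range, one modifies it on a bounded initial segment exactly as in the proof of \cref{Thm:RVI-Qg}, which changes neither the integral over $[\tau_1,\tau_n]$ nor the pointwise bounds at $\tau_k$ for $k\ge2$.

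With the hypotheses checked, \cref{Thm:convolution_bound} bounds $\EE(\|x_n-Tx_n\|)$ by $\frac{\bar\kappa}{\sqrt{\pi\tau_n}}+\frac{2\mu\gamma_a}{\sqrt\pi}\int_{\tau_1}^{\tau_n}h(x)(\tau_n-x)^{-1/2}\,dx+2\mu\,\alpha_n\sumprob_{n-1}+4\mu\,\sumprob_n$. The last two terms I would bound by $2\mu\Var(\alpha_n+2)\sqrt{\alpha_n}$, using $\sumprob_{n-1}\le\Var\sqrt{\alpha_n}$ and $\sumprob_n\le\Var\sqrt{\alpha_{n+1}}\le\Var\sqrt{\alpha_n}$. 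For the middle term I would substitute $x=\tau_1+(\tau_n-\tau_1)t$, $t\in[0,1]$, so that $\tau_n-x=(\tau_n-\tau_1)(1-t)$ and a short computation gives $\lambda_a+(1-a)x=(\lambda_a+(1-a)\tau_n)\bigl(1-z_n(1-t)\bigr)$ with $z_n=\frac{(1-a)(\tau_n-\tau_1)}{\lambda_a+(1-a)\tau_n}$ exactly as in the statement; after the further substitution $s=1-t$ this turns the integral into
\[
\Var\,\frac{\sqrt{\tau_n-\tau_1}}{(\lambda_a+(1-a)\tau_n)^{b}}\int_0^1 s^{-1/2}(1-z_n s)^{-b}\,ds .
\]
Since $z_n\in[0,1)$ — which holds because $\lambda_a+(1-a)\tau_1>0$ — Euler's integral formula identifies the remaining integral as $B(\tfrac12,1)\,{}_2F_1(b,\tfrac12;\tfrac32;z_n)=2\,{}_2F_1(b,\tfrac12;\tfrac32;z_n)$. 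Multiplying by $2\mu\gamma_a/\sqrt\pi$ reproduces precisely the middle term of $B_{n,a}$, and adding the three pieces gives the claimed estimate.

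The main obstacle is not in the bookkeeping above but in the supporting inequality \eqref{Eq:cotaalpha}: one must fix the correction constant $\lambda_a$ so that $\sqrt{\alpha_k}\le(1-\alpha_k)(\lambda_a+(1-a)\tau_k)^{-b}$ holds for \emph{every} $k\ge2$, which requires a sharp two-sided comparison between the stepsize $\alpha_k$ and the partial sum $\tau_k$ for power stepsizes. A minor secondary point is keeping $z_n$ strictly below $1$ so that the hypergeometric series converges, which is guaranteed by $\lambda_a>0$ (and a limiting case $z_n=1$, if it occurred, would be handled by Gauss's summation theorem).
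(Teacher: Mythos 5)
Your proposal is correct and follows essentially the same route as the paper: it invokes \cref{Thm:convolution_bound} with $\gamma=\gamma_a$ and $h(x)=\Var(\lambda_a+(1-a)x)^{-b}$ justified by \cref{Lemma:L3} and \cref{Lemma:kappa}, bounds the two trailing terms by $2\mu\Var(\alpha_n+2)\sqrt{\alpha_n}$, and evaluates the convolution integral by a linear change of variables together with Euler's integral representation of $_2F_1$, exactly as in \cref{appendix00}. Your substitution $x=\tau_1+(\tau_n-\tau_1)t$ followed by $s=1-t$ is just the paper's $y=(\tau_n-x)/(\tau_n-\tau_1)$ in two steps, so there is no substantive difference.
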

\begin{proof} This follows by majorizing the two last terms in \eqref{Eq:RateExpected2} 
by using  \cref{Lemma:L3,Lemma:L1} which give respectively  $\sumprob_{n-1}\leq \Var\sqrt{\alpha_{n}}$ and $\sumprob_{n}\leq\Var\sqrt{\alpha_{n}}$,
and computing the integral term in the bound using the change of variables $y=\frac{\tau_n-x}{\tau_n-\tau_1}$ which yields
\begin{eqnarray}\label{Eq:IntegralBound_n^a}
 \int_{\tau_1}^{\tau_n}\!\!\frac{h(x)}{\sqrt{\tau_n-x}}\,dx&=&
 \mbox{$
 \frac{\Var\,\sqrt{\tau _n-\tau _1}}{{(\lambda_a+(1-a) \tau _n )^{b}}}$}\int _0^1\mbox{$\frac{1}{\sqrt{y}}(1-z_{n}\,y)^{-b}dy$}\\\nonumber
 &=& \mbox{$\frac{2\Var\,\sqrt{\tau_n -\tau_1}}{ (\lambda_a +(1-a)\tau_n)^{b}} \, _2F_1\!\left(b,\frac{1}{2};\frac{3}{2};z_{n}\right)$}
\end{eqnarray}
where the last equality results by matching the coefficients in the known identity
$$_2F_1(a,b;c;z)=\mbox{$\frac{\Gamma(c)}{\Gamma(b)\Gamma(c-b)}$}\int_0^1y^{b-1}(1-y)^{c-b-1}(1-z \,y)^{-a} dy.$$

\vspace{-4ex}
 
\ \end{proof}

\vspace{2ex}
For $a=\frac{2}{3}$ we get the following simpler expression for the previous error bound. 
\begin{proposition}\label{Cor:ErrorBoundPower1} Assume \eqref{H1} and \eqref{Eq:BoundedKappa}, and  let $c=3\,\lambda_{2/3}\approx 1.76393$ and $\gamma_{2/3}=\frac{\alpha_4(1-\alpha_4)}{\alpha_{5}(1-\alpha_{5})}\approx 1.06584$. Then
\begin{equation}\label{Eq:RateExpected4}
B_{n,\frac{2}{3}}=\!\!\mbox{ \large
$\frac{\bar\kappa}{\sqrt{\pi\tau_n}}\!+\!
 \frac{6\normeq \Var\gamma_{2/3}}{\sqrt{\pi} \sqrt{c +\tau_n }} $}\, \mbox{\small $\ln \left(1+2\frac{(\tau_n -\tau _1)+\sqrt{ (c +\tau_n)(\tau_n -\tau _1)}}{c +\tau_1 }\right)$}\!+\!\mbox{\small $2 \normeq \Var \,(\alpha_n\!+\!2)\sqrt{\alpha_{n}}$}.
\end{equation}
\end{proposition}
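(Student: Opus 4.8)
The plan is to specialize \cref{Cor:ErrorBoundPower} to $a=\tfrac{2}{3}$ and simplify the hypergeometric factor. With $a=\tfrac{2}{3}$ we have $1-a=\tfrac13$ and $b=\frac{a}{2(1-a)}=1$, so the exponent $-b$ appearing in $h$ and in the integral becomes $-1$, and the general formula from \cref{Cor:ErrorBoundPower} instantiates to
\[
B_{n,\frac23}=\frac{\bar\kappa}{\sqrt{\pi\tau_n}}+\frac{4\mu\Var}{\sqrt\pi}\,\frac{\gamma_{2/3}\sqrt{\tau_n-\tau_1}}{\lambda_{2/3}+\tfrac13\tau_n}\,{}_2F_1\!\Bigl(1,\tfrac12;\tfrac32;z_n\Bigr)+2\mu\Var\,(\alpha_n+2)\sqrt{\alpha_n},
\]
where $z_n=\frac{\tfrac13(\tau_n-\tau_1)}{\lambda_{2/3}+\tfrac13\tau_n}$. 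First I would multiply numerator and denominator of the middle term by $3$, introducing $c=3\lambda_{2/3}$, so that $\lambda_{2/3}+\tfrac13\tau_n=\tfrac13(c+\tau_n)$ and the prefactor becomes $\frac{12\mu\Var\gamma_{2/3}\sqrt{\tau_n-\tau_1}}{\sqrt\pi\,(c+\tau_n)}$, while $z_n=\frac{\tau_n-\tau_1}{c+\tau_n}$.

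Next I would evaluate the hypergeometric function in closed form using the elementary identity ${}_2F_1\bigl(1,\tfrac12;\tfrac32;w^2\bigr)=\frac{1}{2w}\ln\frac{1+w}{1-w}=\frac{\operatorname{arctanh}w}{w}$, valid for $w\in[0,1)$. Applying this with $w=\sqrt{z_n}=\sqrt{\tfrac{\tau_n-\tau_1}{c+\tau_n}}$ gives
\[
{}_2F_1\!\Bigl(1,\tfrac12;\tfrac32;z_n\Bigr)=\frac{1}{\sqrt{z_n}}\operatorname{arctanh}\sqrt{z_n}=\sqrt{\frac{c+\tau_n}{\tau_n-\tau_1}}\;\operatorname{arctanh}\sqrt{\frac{\tau_n-\tau_1}{c+\tau_n}}.
\]
Substituting this back, the factors $\sqrt{\tau_n-\tau_1}$ and $\sqrt{c+\tau_n}$ recombine: the prefactor $\frac{\sqrt{\tau_n-\tau_1}}{c+\tau_n}$ times $\sqrt{\frac{c+\tau_n}{\tau_n-\tau_1}}$ equals $\frac{1}{\sqrt{c+\tau_n}}$, so the whole middle term collapses to $\frac{12\mu\Var\gamma_{2/3}}{\sqrt\pi\,\sqrt{c+\tau_n}}\operatorname{arctanh}\sqrt{\frac{\tau_n-\tau_1}{c+\tau_n}}$.

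It then remains to rewrite the $\operatorname{arctanh}$ as the stated logarithm with coefficient $6$ rather than $12$. Using $\operatorname{arctanh}w=\tfrac12\ln\frac{1+w}{1-w}$ absorbs the factor $\tfrac12$, turning $12$ into $6$; and a short algebraic manipulation of $\frac{1+w}{1-w}$ with $w=\sqrt{\tfrac{\tau_n-\tau_1}{c+\tau_n}}$, after rationalizing (multiply top and bottom by $\sqrt{c+\tau_n}+\sqrt{\tau_n-\tau_1}$ and use that the denominator becomes $(c+\tau_n)-(\tau_n-\tau_1)=c+\tau_1$), yields $\frac{1+w}{1-w}=1+2\,\frac{(\tau_n-\tau_1)+\sqrt{(c+\tau_n)(\tau_n-\tau_1)}}{c+\tau_1}$, which is exactly the argument in \cref{Eq:RateExpected4}. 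The first and last terms of $B_{n,\frac23}$ are already in the desired form since $b=1$, $a=\tfrac23$. Finally I would record the numerical values $c=3\lambda_{2/3}\approx1.76393$ and $\gamma_{2/3}=\frac{\alpha_4(1-\alpha_4)}{\alpha_5(1-\alpha_5)}\approx1.06584$, the latter because the supremum defining $\gamma_a$ in \cref{eq:gamma_ag} is attained at $k=4$ when $a=\tfrac23$ (this can be checked directly by examining the monotonicity of the ratio, and is the only genuinely case-specific verification needed). The main obstacle, though minor, is keeping the rationalization bookkeeping straight so that the constants $c$, $6$, and the logarithm argument all match the claimed expression simultaneously.
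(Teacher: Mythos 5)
Your proposal is correct and follows essentially the same route as the paper: specialize \cref{Cor:ErrorBoundPower} to $a=\tfrac23$ (so $b=1$) and apply the identity ${}_2F_1(1,\tfrac12;\tfrac32;z)=\frac{1}{2\sqrt z}\ln\frac{1+\sqrt z}{1-\sqrt z}$, the rest being the bookkeeping with $c=3\lambda_{2/3}$ and the rationalization that you carry out correctly. The paper's proof is just the one-line citation of this identity, so your write-up is simply a more detailed version of the same argument.
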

\begin{proof} This follows from the identity $_2F_1(1,\frac{1}{2};\frac{3}{2};z)=\frac{1}{2\sqrt{z}}\ln(\frac{1+\sqrt{z}}{1-\sqrt{z}})$.
\end{proof}

 \vspace{2ex} 
 
 Let us consider next the case $a=1$, that is to say, $\alpha_n={1}/{(n+1)}$.
In this case $\tau_n=\sum_{k=1}^n\frac{k}{(k+1)^2}\approx\ln n$, and in fact  $\tau_n-\ln (n\!+\!1)\in (\gamma-\frac{\pi^2}{6},0)$
with $\gamma\approx 0.5772$  the Euler-Mascheroni constant.
In view of   \cref{Lemma:alfa1}, we can apply \eqref{Eq:RateExpected2}  by 
using $h(x)=\Var\,\omega\, e^{-x/2}$ with $\omega=\frac{\sqrt{3}}{2}\,e^{17/72}$. The resulting
convolution integral in the error bound can be expressed in closed form in terms of Dawson's function
$$D_+(x)\triangleq e^{-x^2}\int_0^xe^{y^2}dy\;\approx\;\frac{1}{2x}$$
where the asymptotics for  $x$ large follows from l'H\^opital's rule.
\begin{proposition}\label{le:B3}Assume  \eqref{H1} and \eqref{Eq:BoundedKappa}. Let 
$\alpha_n=1/{(n+1)}$ and denote $d_1=\sqrt{3}\,e^{1/9}\,\frac{32}{27}$. Then, for all $n\geq 1$ we have 
$\EE(\|x_n-Tx_n\|)\leq\! B_{n,1}$ with 
\begin{equation}\label{Eq:RateExpected5}
B_{n,1}\triangleq
\frac{\bar\kappa}{\sqrt{\pi\tau_n}}+
\frac{2\sqrt{2}\,\normeq \Var d_1}{\sqrt{\pi}}\,D_+\!\left(\sqrt{\mbox{$\frac{1}{2}$}\left(\tau_n\!-\!\mbox{$\frac{1}{4}$}\right)}\right)+\frac{2 \normeq \Var (2n+3)}{(n+1)^{3/2}}
\end{equation}
and $B_{n,1}\approx\;\frac{\bar\kappa+2\mu\Var d_1}{\sqrt{\pi}}\frac{1}{\sqrt{\ln n}}$ as $n\to\infty$.
\end{proposition}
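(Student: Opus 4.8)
The plan is to specialise the integral estimate of \cref{Thm:convolution_bound} to $\alpha_n=1/(n+1)$, using the decreasing convex function $h(x)=\Var\,\omega\,e^{-x/2}$ with $\omega=\frac{\sqrt3}{2}\,e^{17/72}$ and the constant $\gamma=\gamma_1=\frac{32}{27}$ from \eqref{eq:gamma_ag}. First I would verify the hypotheses: $h$ is $C^2$, positive, decreasing and convex; condition \eqref{eq:convol}(b) is precisely the definition of $\gamma_1$ (whose supremum is attained at $k=2$, giving $\frac{32}{27}$); and condition \eqref{eq:convol}(a), namely $\sumprob_{k-1}\le(1-\alpha_k)\,\Var\,\omega\,e^{-\tau_k/2}$, is exactly the content of \cref{Lemma:alfa1} for this stepsize --- this is the step that pins down the value of $\omega$ and the exponent $\frac{17}{72}$, since it must absorb the $O(1)$ gap between $\tau_n$ and $\ln(n+1)$.

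The heart of the argument is then the convolution integral $\int_{\tau_1}^{\tau_n}\!h(x)(\tau_n-x)^{-1/2}\,dx=\Var\,\omega\int_{\tau_1}^{\tau_n}\!e^{-x/2}(\tau_n-x)^{-1/2}\,dx$. I would perform the substitution $\tau_n-x=2v^2$, which turns the integral into $2\sqrt2\,\Var\,\omega\,e^{-\tau_n/2}\int_0^{\sqrt{(\tau_n-\tau_1)/2}}e^{v^2}\,dv$, and then recognise the remaining integral as $e^{(\tau_n-\tau_1)/2}\,D_+\!\big(\sqrt{(\tau_n-\tau_1)/2}\,\big)$ from the definition of Dawson's function. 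Since $\tau_1=\alpha_1(1-\alpha_1)=\tfrac14$, the exponentials collapse and the integral equals $2\sqrt2\,\Var\,\omega\,e^{-1/8}\,D_+\!\big(\sqrt{\tfrac12(\tau_n-\tfrac14)}\,\big)$. Multiplying by the prefactor $\frac{2\normeq\gamma_1}{\sqrt\pi}$ of \eqref{Eq:RateExpected2} and setting $d_1:=2\gamma_1\omega\,e^{-1/8}$, the identity $\tfrac{17}{72}-\tfrac18=\tfrac19$ gives $d_1=\sqrt3\,e^{1/9}\,\tfrac{32}{27}$, which is the middle term of $B_{n,1}$.

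For the two residual terms $2\normeq\alpha_n\sumprob_{n-1}+4\normeq\sumprob_n$ I would use the bounds $\sumprob_{n-1}\le\Var\sqrt{\alpha_n}$ and $\sumprob_n\le\Var\sqrt{\alpha_n}$ supplied by \cref{Lemma:L3} and \cref{Lemma:L1} (as in the proof of \cref{Cor:ErrorBoundPower}); with $\alpha_n=1/(n+1)$ they add up to $2\normeq\Var\sqrt{\alpha_n}(\alpha_n+2)=\frac{2\normeq\Var(2n+3)}{(n+1)^{3/2}}$, the last term of $B_{n,1}$. Leaving the first term $\bar\kappa/\sqrt{\pi\tau_n}$ untouched completes \eqref{Eq:RateExpected5}. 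For the asymptotics I would invoke $\tau_n=\ln(n+1)+O(1)$ and $D_+(x)\approx\frac{1}{2x}$ as $x\to\infty$: the first term is $\approx\bar\kappa/\sqrt{\pi\ln n}$, the Dawson term is $\approx\frac{2\normeq\Var d_1}{\sqrt{\pi\ln n}}$, and the last term is $O(1/\sqrt n)=o(1/\sqrt{\ln n})$, so summing the leading orders yields $B_{n,1}\approx\frac{\bar\kappa+2\normeq\Var d_1}{\sqrt\pi}\frac{1}{\sqrt{\ln n}}$.

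I expect the only genuinely non-routine point to be the verification of \eqref{eq:convol}(a) with the sharp constant $\omega$, which is delegated to \cref{Lemma:alfa1}; once that is granted, the remainder is a single elementary substitution, the recognition of Dawson's function, and careful bookkeeping of the constants.
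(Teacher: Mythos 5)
Your proposal is correct and follows essentially the same route as the paper: the same choice $h(x)=\Var\,\omega\,e^{-x/2}$ via \cref{Lemma:alfa1} with $\gamma_1=\tfrac{32}{27}$, the same change of variables reducing the convolution integral to Dawson's function (with $\tau_1=\tfrac14$ collapsing the exponentials so that $2\gamma_1\omega e^{-1/8}=d_1$), the same bounds $\sumprob_{n-1},\sumprob_n\leq\Var\sqrt{\alpha_n}$ for the residual terms, and the same asymptotics from $\tau_n\approx\ln n$ and $D_+(x)\approx\tfrac{1}{2x}$. No gaps.
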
 
\begin{proof} Considering $\tau_1=\frac{1}{4}$ and using the change of variables $y=\sqrt{(\tau \!-\!x)/2}$,
the convolution integral in the error bound  \eqref{Eq:RateExpected2}  can be computed as
\begin{eqnarray*}
\int_{\tau_1}^{\tau}\frac{\omega\,e^{-x/2}}{\sqrt{\tau-x}}\,dx
&=&\mbox{$2 \sqrt{2} \,\omega $} e^{-\frac{\tau}{2}}\int_0^{\sqrt{(\tau-\tau_1)/2}} e^{y^2} \, dy\\
&=&\mbox{$\sqrt{6}\,e^{1/9}\,D_+\!\big(\mbox{\scriptsize $\sqrt{\left(\tau\!-\!\frac{1}{4}\right)/2}$}\big)$}\\
&\approx&\mbox{$\frac{\sqrt{3}\,e^{1/9}}{\sqrt{\tau}}$}.
\end{eqnarray*}
Then \eqref{Eq:RateExpected5} follows by substituting in \eqref{Eq:RateExpected2} with $\tau=\tau_n\approx\ln n$,
and noting that $\frac{\alpha_k(1-\alpha_k)}{\alpha_{k+1}(1-\alpha_{k+1})}$
is decreasing so that its maximum $\gamma_1=\frac{32}{27}$ is attained for $k=2$.
\end{proof}

\vspace{1ex}
Combining the estimates in these preliminary Lemmas, we now proceed to present the proof of  \cref{Thm:RatePower}.

\begin{proof}[\rm\bf Proof of  \cref{Thm:RatePower}] 
For $a=1$ the tight asymptotics of $B_{n,1}$ in \eqref{Eq:ratefinal}
was already established in \cref{le:B3}. 
Let us  then consider the case $\frac{1}{2}\leq a<1$.

We estimate the asymptotic rate for the three terms in the 
 bounds $B_{n,a}$ in \cref{Cor:ErrorBoundPower}. The simplest one is the third term $I_3\triangleq 2 \normeq \Var \,(\alpha_n\!+\!2)\sqrt{\alpha_{n}}\approx 4\normeq \Var \, n^{-a/2}$, 
 which holds for all 
$\frac{1}{2}\leq a< 1$.
On the other hand, an integral estimate for the sum $\tau_n$ yields $\tau_n\approx\frac{n^{1-a}}{1-a}$,
so that the first term in $B_{n,a}$ is $I_1\triangleq \frac{\bar\kappa}{\sqrt{\pi\tau_n}}\approx\frac{\bar\kappa\sqrt{1-a}}{\sqrt{\pi}}\, n^{-(1-a)/2}$.
Note that for $a>\frac{1}{2}$ and $n$ large $I_3$ is negligible compared to $I_1$, whereas for $a=\frac{1}{2}$ they
are both of the same order $O(n^{-1/4})$.

Let us now analize the rate of  the second term $I_2\triangleq \frac{2\normeq \gamma}{\sqrt{\pi}}\! \int_{\tau_1}^{\tau_n}\!\!\!\frac{h(x)}{\sqrt{\tau_n\!-\!x}}\,dx$,
which we separate in three cases depending on the location of $a$.

\vspace{1ex}
\noindent\underline{\sc Case $a\in[\frac{1}{2},\frac{2}{3})$}. From \eqref{Eq:IntegralBound_n^a} with $b=\frac{a}{2(1-a)}$ and $z_{n}= \frac{(1-a)(\tau _n-\tau _1)}{\lambda_a+(1-a) \tau _n}$, we have
\begin{eqnarray}\label{Eq:intnosing}
 I_2\triangleq \frac{2\normeq\gamma_a}{\sqrt{\pi}}\! \int_{\tau_1}^{\tau_n}\!\!\!\frac{h(x)}{\sqrt{\tau_n\!-\!x}}\,dx
&=&\mbox{$ \frac{2\normeq \Var\gamma_a}{\sqrt{\pi}} \frac{\sqrt{\tau _n-\tau _1}}{{(\lambda_a+(1-a) \tau _n )^{b}}}$}\int _0^1\mbox{$\frac{1}{\sqrt{y}}(1-z_{n}\,y)^{-b}dy$}\\\nonumber
&\approx&\mbox{$ \frac{2\normeq \Var\gamma_a}{\sqrt{\pi(1-a)}} n^{\frac{1}{2}-a}$}\int _0^1\mbox{$\frac{1}{\sqrt{y}}(1-z_{n}\,y)^{-b}dy$}
\end{eqnarray}
We observe that $z_n$ increases and converges to $1$ and, since $b<1$, by monotone convergence 
the latter integral has a finite limit when $n\to\infty$, namely\footnote{Recall the definition of the gamma function $\Gamma(z)=\int_0^\infty x^z e^{-x}dx$}
$$\int _0^1\mbox{$\frac{1}{\sqrt{y}}(1-y)^{-b}dy=\sqrt{\pi} \,\frac{\Gamma(1-b)}{\Gamma(\frac{3}{2}-b)}.$}$$
Hence $I_2\approx \frac{2\normeq \Var\gamma_a}{\sqrt{1-a}}\frac{\Gamma(1-b)}{\Gamma(\frac{3}{2}-b)} n^{\frac{1}{2}-a}$ and
since $\frac{1}{2}-a < \frac{1}{2}(1-a)$ it follows that $I_2$ dominates $I_1$ and $I_3$ in the error bound. This
proves \eqref{Eq:ratefinal} for $a\in[\frac{1}{2},\frac{2}{3})$.

\vspace{1ex}
\noindent\underline{\sc Case $a=\frac{2}{3}$}. Here we have $\tau_n\approx 3 \,n^{1/3}$ and one can readily check that the dominating term is precisely $I_2$
with $I_2 \approx \frac{2\normeq \Var\gamma_{2/3}}{\sqrt{3\pi}} \frac{\ln n}{n^{1/6} }$ which follows directly from \eqref{Eq:RateExpected4}.
This establishes the case $a=\frac{2}{3}$ in \eqref{Eq:ratefinal}.

\vspace{1ex}
\noindent\underline{\sc Case $a\in(\frac{2}{3},1)$}. Here we have $b>1$ and since $z_n\to 1$ the integral  on the left-hand side of \eqref{Eq:intnosing} diverges as $n\to\infty$.
To deal with this case we use the original expression which we write as
$$
 I_2=\frac{2\normeq \Var\gamma_a}{\sqrt{\pi\tau_n}}\!  \int_{0}^{\infty}\!\frac{(\lambda_a+(1-a) x)^{-b}}{\sqrt{1\!-\!x/\tau_n}}\mathbbm{1}_{[\tau_1,\tau_n]}(x)\,dx.$$
 Using Lebesgue's dominated convergence theorem, the latter integral converges to 
 $$\int_{\tau_1}^{\infty}\!(\lambda_a+(1-a) x)^{-b}\,dx=\mbox{$\frac{2(\lambda_a+(1-a) \tau _1 )^{1-b}}{3a-2}$.}$$
It follows that $I_2\approx\frac{2\normeq \Var d_a}{\sqrt{\pi \tau_n}}$ which is of the same order as $I_1\approx\frac{\bar\kappa}{\sqrt{\pi\tau_n}}$, and both terms combined
yield the asysmptotics \eqref{Eq:ratefinal} for $a\in(\frac{2}{3},1)$.
 \end{proof}
 
\section{Estimates for stepsizes of power form}\label{appendix1}

This Appendix considers again the case of stepsizes of power form $\alpha_n=\frac{1}{(n+1)^a}$ with $0<a\leq 1$,
and establishes the technical estimates required to prove the explicit error bounds in {\color{blue} \cref{appendix00}}.
As before we adopt the notations in  \eqref{eq:power}, and we write $\tau_n$ for the expression $\tau_n(a)$ in \cref{eq:taua}.

\subsection{Bounds for   \texorpdfstring{$\delta_n^2=\sum_{k=1}^n(\pi_k^n)^2$}{Lg}}
 \begin{lemma}  \label{Lemma:L1}
 For $\alpha_n=\frac{1}{(n+1)^a}$ with $0<a\leq 1$  we have $\frac{1}{2}\alpha_n\leq \delta_n^2\leq \alpha_n$ for $n\geq 1$.
  \end{lemma}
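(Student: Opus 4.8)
The plan is to prove both inequalities simultaneously by induction on $n$, using the one-step recursion satisfied by $\delta_n^2$. From the definition $\pi_k^n=\alpha_k\prod_{i=k+1}^n(1-\alpha_i)$ one reads off directly that $\pi_n^n=\alpha_n$ and $\pi_k^n=(1-\alpha_n)\,\pi_k^{n-1}$ for $1\le k\le n-1$, so that
\[
\delta_n^2=\sum_{k=1}^n(\pi_k^n)^2=(1-\alpha_n)^2\,\delta_{n-1}^2+\alpha_n^2,\qquad\text{with }\delta_1^2=\alpha_1^2 .
\]
I would first settle the base case $n=1$: since $0<a\le1$ gives $\alpha_1=2^{-a}\in[\tfrac12,1]$, we obtain $\tfrac12\alpha_1\le\alpha_1^2\le\alpha_1$, which is exactly the claim for $n=1$ (and shows the lower bound is already tight when $a=1$).

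For the inductive step, assume $\tfrac12\alpha_{n-1}\le\delta_{n-1}^2\le\alpha_{n-1}$, and note that $\alpha_n=1/(n+1)^a$ is decreasing, so $\alpha_n\le\alpha_{n-1}$. The lower bound is the easy half: inserting $\delta_{n-1}^2\ge\tfrac12\alpha_{n-1}\ge\tfrac12\alpha_n$ into the recursion gives
\[
\delta_n^2\ \ge\ \tfrac12\alpha_n(1-\alpha_n)^2+\alpha_n^2\ =\ \tfrac12\alpha_n\big[(1-\alpha_n)^2+2\alpha_n\big]\ =\ \tfrac12\alpha_n\big(1+\alpha_n^2\big)\ \ge\ \tfrac12\alpha_n .
\]
For the upper bound, $\delta_{n-1}^2\le\alpha_{n-1}$ yields $\delta_n^2\le(1-\alpha_n)^2\alpha_{n-1}+\alpha_n^2$, and it is enough to verify $(1-\alpha_n)^2\alpha_{n-1}\le\alpha_n-\alpha_n^2=\alpha_n(1-\alpha_n)$, i.e.\ $(1-\alpha_n)\alpha_{n-1}\le\alpha_n$. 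Multiplying through by $n^a(n+1)^a>0$, this is precisely $(n+1)^a\le n^a+1$, which holds because $t\mapsto t^a$ is subadditive on $[0,\infty)$ for $a\in(0,1]$ (apply it with $t=n$ and $t=1$); equivalently it follows from Bernoulli's inequality $(1+1/n)^a\le 1+1/n\le 1+1/n^a$.

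I do not expect a genuine obstacle here — the whole argument is a short induction. The only points that need a moment of care are recognizing that the upper-bound step collapses exactly to the elementary subadditivity inequality $(n+1)^a\le n^a+1$, and that in the lower-bound step the bracket $(1-\alpha_n)^2+2\alpha_n$ simplifies to $1+\alpha_n^2$; together with the base case, these two reductions are what make the induction close.
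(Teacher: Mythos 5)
Your proof is correct; every step checks out: the recursion $\delta_n^2=(1-\alpha_n)^2\delta_{n-1}^2+\alpha_n^2$ follows from $\pi_n^n=\alpha_n$ and $\pi_k^n=(1-\alpha_n)\pi_k^{n-1}$, the base case $\alpha_1=2^{-a}\in[\tfrac12,1]$ is right, the lower-bound bracket indeed simplifies to $1+\alpha_n^2$, and the upper bound correctly collapses to $(n+1)^a\le n^a+1$, which is subadditivity of $t\mapsto t^a$. Your route differs from the paper's in an interesting way. For the lower bound the paper also inducts on the same recursion, but keeps $\alpha_{n-1}$ and reduces to the inequality $(1-\alpha)^2\ge(1-2\alpha)(1-\alpha^{1/a})^a$, handled by a case split at $\alpha=\tfrac12$; your replacement of $\tfrac12\alpha_{n-1}$ by the smaller $\tfrac12\alpha_n$ (legitimate since $\alpha_n$ is decreasing) is a genuine simplification that avoids that case analysis. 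For the upper bound the paper does not induct at all: it shows $\pi_k^n$ is nondecreasing in $k$ — which reduces to the very same inequality $(k+1)^a-1\le k^a$ you use — and then bounds $\delta_n^2\le\pi_n^n\sum_{k=1}^n\pi_k^n\le\alpha_n$ using $\sum_{k=1}^n\pi_k^n=1-\pi_0^n\le 1$. So both arguments ultimately rest on the same elementary fact, but yours is shorter and uniform (a single induction proves both bounds), while the paper's monotonicity-plus-summation device is the one it reuses as the starting point for the sharper estimate $\delta_n^2\le\alpha_{n+1}$ in the next lemma, where a bare induction of your type is no longer sufficient on its own.
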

\begin{proof} For the upper bound we claim that $\pi_k^n$ increases for $k\geq 1$ so that $\pi_k^n\leq\pi_n^n=\alpha_n$,
and then  $$\mbox{$\delta_n^2=\sum_{k=1}^n(\pi_k^n)^2\leq \pi_n^n\sum_{k=1}^n\pi_k^n= \pi_n^n(1-\pi_0^n)\leq\pi_n^n=\alpha_n$}.$$
To prove the monotonicity we note that $\pi_{k}^n\leq\pi_{k+1}^n$ amounts to
$ \alpha_{k}(1\!-\!\alpha_{k+1})\leq \alpha_{k+1}$, which can be expressed as $(k+1)^a-1\leq k^a$.
Dividing by $(k+1)^a$ this becomes
$1\leq (\frac{1}{k+1})^a+(1-\frac{1}{k+1})^a$. The latter inequality holds because the map $x\mapsto x^a+(1-x)^a$ 
 is concave for $x\in[0,1]$ with minimum value 1 attained at the border of the interval. 
 
Let us now establish the lower bound $ \delta_n^2\geq\frac{1}{2}\alpha_n$. For $n=1$ this holds since
$\delta_1^2=\alpha_1^2$ and $\alpha_1=\frac{1}{2^a}\geq\frac{1}{2}$. For the inductive step, if  $\delta_{n-1}^2\geq \frac{1}{2}\alpha_{n-1}$ we note
that $$\delta_n^2=(1-\alpha_n)^2\delta_{n-1}^2+\alpha_n^2\geq\mbox{$ \frac{1}{2}$}(1-\alpha_n)^2\alpha_{n-1}+\alpha_n^2$$
so that it suffices to show that $(1-\alpha_n)^2\alpha_{n-1}+2\alpha_n^2\geq \alpha_n$, or equivalently
$$\mbox{$(1-\alpha_n)^2\geq (1-2\alpha_n)\frac{\alpha_{n}}{\alpha_{n-1}}$}.$$
Since $\frac{\alpha_{n}}{\alpha_{n-1}}=(\frac{n}{n+1})^a=(1-\alpha_n^{1/a})^{a}$,
it suffices to show  $(1-\alpha)^2\geq (1-2\alpha)(1-\alpha^{1/a})^a$ for all $\alpha\in[0,1]$. When $\alpha\geq\frac{1}{2}$ this is clear since
the right-hand side becomes negative. For $\alpha\leq\frac{1}{2}$ the inequality follows since  $(1-\alpha)^2\geq (1-2\alpha)$ and $(1-\alpha^{1/a})^a\leq 1$.
\end{proof}

Below we establish a slightly sharper upper bound.
The proof  is elementary but somewhat  technical so we split it into several Lemmas.

\begin{lemma}\label{Lemma:L3} For $\alpha_n=\frac{1}{(n+1)^a}$ with $0<a\leq 1$ we have $\delta_n^2\leq\alpha_{n+1}$ for all $n\geq 1$.
\end{lemma}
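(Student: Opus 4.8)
The main tool is the one–step recursion for $\delta_n^2$ that is already used inside the proof of \cref{Lemma:L1}: peeling off the term $k=n$ in $\delta_n^2=\sum_{k=1}^n(\pi_k^n)^2$ and using $\pi_n^n=\alpha_n$ together with $\pi_k^n=(1-\alpha_n)\pi_k^{n-1}$ for $k<n$ gives
\begin{equation*}
\delta_n^2=(1-\alpha_n)^2\,\delta_{n-1}^2+\alpha_n^2,\qquad \delta_0^2=0 .
\end{equation*}
The plan is to prove $\delta_n^2\le\alpha_{n+1}$ by induction on $n$. The base case $n=1$ is immediate, since $\delta_1^2=\alpha_1^2=4^{-a}\le 3^{-a}=\alpha_2$ because $3\le 4$.

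For the inductive step it is convenient to monitor the slack $s_n:=\alpha_{n+1}-\delta_n^2$, which by the recursion satisfies
\begin{equation*}
s_n=(1-\alpha_n)^2 s_{n-1}+\rho_n,\qquad \rho_n:=(\alpha_{n+1}-\alpha_n)+\alpha_n^2(1-\alpha_n),\qquad s_0=\alpha_1\ge\tfrac12 .
\end{equation*}
One cannot argue directly that ``$s_{n-1}\ge 0$ implies $s_n\ge 0$'', because $\rho_n$ need not be nonnegative: equivalently, inserting the crude bound $\delta_{n-1}^2\le\alpha_n$ into the recursion yields only $\delta_n^2\le\alpha_n(1-\alpha_n+\alpha_n^2)$, and this can exceed $\alpha_{n+1}$ precisely when $a$ is close to $1$ — for instance for $\alpha_k=1/(k+1)$ one computes $\rho_n=-1/[(n+1)^3(n+2)]<0$. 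Hence the induction must carry a quantitative lower bound on $s_n$. Unrolling the recursion and using $\prod_{j=k+1}^n(1-\alpha_j)=\pi_k^n/\alpha_k$ one gets the closed form
\begin{equation*}
s_n=\alpha_1\,(\pi_0^n)^2+\sum_{k=1}^n(\pi_k^n)^2\Big[(1-\alpha_k)+\frac{\alpha_{k+1}-\alpha_k}{\alpha_k^2}\Big],\qquad \pi_0^n:=\mbox{$\prod_{i=1}^n$}(1-\alpha_i).
\end{equation*}
The first group of terms, $\sum_k(\pi_k^n)^2(1-\alpha_k)$, is of the same order as $\delta_n^2$, hence at least $\tfrac12(1-\alpha_1)\alpha_n$ by the lower bound in \cref{Lemma:L1}; the possibly negative contribution is controlled via $|\alpha_{k+1}-\alpha_k|\le a\,(k+1)^{-a-1}$, so it is at most $a\sum_k(\pi_k^n)^2(k+1)^{a-1}$ in absolute value, and since the weights $(\pi_k^n)^2$ are increasing in $k$ (so they concentrate near $k=n$, where $(k+1)^{a-1}\asymp (n+1)^{a-1}$) this is $O(1/n)$, of strictly smaller order than $\alpha_n$ when $0<a<1$. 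Collecting the estimates gives $s_n\ge 0$.

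The boundary exponent $a=1$ is genuinely tight and is best handled directly: for $\alpha_k=1/(k+1)$ the products telescope to $\pi_k^n=1/(n+1)$ for every $1\le k\le n$, so $\delta_n^2=n/(n+1)^2$, and $n/(n+1)^2\le 1/(n+2)$ because $n(n+2)\le(n+1)^2$.

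The delicate point — and the main obstacle — is exactly this near-tightness as $a\nearrow 1$: in the identity above the leading contributions of the positive group and of the (negative) term $\sum_k(\pi_k^n)^2(\alpha_{k+1}-\alpha_k)/\alpha_k^2$ essentially cancel, so the positivity of $s_n$ comes from a lower-order residual that must be tracked with estimates uniform in $n$ (especially for the first indices, where $\alpha_k$ is not yet small) and uniform over the whole range $0<a\le 1$, where the behaviour of $\delta_n^2$ interpolates between $\delta_n^2\approx\alpha_n^2$ for small $a$ and $\delta_n^2\approx\alpha_n(1-\alpha_n)$ for $a=1$. It is precisely here that the two-sided estimate $\tfrac12\alpha_n\le\delta_n^2\le\alpha_n$ and the monotonicity $\pi_1^n\le\cdots\le\pi_n^n$ from \cref{Lemma:L1} are used.
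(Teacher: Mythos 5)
Your reduction to the recursion $\delta_n^2=(1-\alpha_n)^2\delta_{n-1}^2+\alpha_n^2$, the closed form for the slack $s_n=\alpha_{n+1}-\delta_n^2$, the base case $n=1$, and the exact treatment of the endpoint $a=1$ (where $\pi_k^n=1/(n+1)$ and $\delta_n^2=n/(n+1)^2\le 1/(n+2)$) are all correct. But the heart of the lemma --- positivity of $s_n$ for \emph{every} $n\ge 1$ and \emph{every} $a\in(0,1)$ --- is not actually established. Your argument lower-bounds the positive group by $\tfrac12(1-\alpha_1)\alpha_n$ and upper-bounds the modulus of the negative group by $a\sum_{k}(\pi_k^n)^2(k+1)^{a-1}=O(1/n)$, and then compares orders of magnitude. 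This comparison is asymptotic in $n$ with constants that degenerate as $a\nearrow 1$: the negative sum is genuinely of size about $a/(2(n+1))$ (this is what one gets, e.g., from Chebyshev's sum inequality together with the monotonicity of $\pi_k^n$), while your lower bound for the positive group is about $\alpha_n/4$, and for $a$ close to $1$ one has $\alpha_n/4<a/(2(n+1))$ for all $n$, so the two crude bounds do not combine to give $s_n\ge 0$. Indeed, at $a=1$ the true slack is $s_n=1/((n+1)^2(n+2))$, two orders of magnitude smaller than either group separately, so any scheme that bounds the two groups independently with these constants cannot work near $a=1$; the cancellation must be tracked exactly. You acknowledge this (``estimates uniform in $n$ and uniform over the whole range $0<a\le1$'' are needed) but do not carry them out, so the proposal is a programme rather than a proof. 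Note also that the lemma is non-asymptotic, so even for a fixed $a<1$ the small and moderate values of $n$ would still need an explicit argument.

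For contrast, the paper resolves precisely this uniformity issue by a different device: it runs the induction on the ratio $\rho_n=\delta_n^2/\alpha_{n+1}$ using the two complementary upper bounds \eqref{Eq:Cota2} and \eqref{Eq:Cota1}, and then analyzes these bounds as functions of the exponent $a$ (concavity of the logarithm of one bound, the decrease-then-increase shape of the other, and a comparison at the crossover exponent $a_n$ where the two coincide). Working in the variable $a$ rather than estimating orders in $n$ is what delivers the inequality simultaneously for all $n\ge 1$ and all $a\in(0,1]$, including the near-critical regime $a\approx 1$ where your sketch leaves the gap.
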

\begin{proof}  Let us set $\rho_n\triangleq \delta_n^2/\alpha_{n+1}$ and prove by induction that $\rho_n\leq 1$.
For $n=1$ this holds since $\rho_1=\alpha_1^2/\alpha_2=(\frac{3}{4})^a\leq 1$. 
For the inductive step, we take $n\geq 2$ such that $\rho_{n-1}\leq 1$. We will exploit two alternative
 upper bounds. As in the proof of  \cref{Lemma:L1}  we have $\delta_n^2\leq\pi^n_n(1-\pi_0^n)$ and therefore
\begin{equation}\label{Eq:Cota2}
\rho_n\leq \mbox{$\frac{\alpha_n}{\alpha_{n+1}}(1-\prod_{i=1}^n(1-\alpha_i)).$}
\end{equation}
 The second bound is derived from the recursive formula $\delta_n^2=(1-\alpha_n)^2\delta_{n-1}^2+\alpha_n^2$,
 which combined with the induction hypothesis yields
\begin{equation}\label{Eq:Cota1}
\rho_n\leq\mbox{$ \frac{\alpha_n}{\alpha_{n+1}}(1-\alpha_n+\alpha_n^2)=  \frac{\alpha_n}{\alpha_{n+1}}(1-\alpha_n(1-\alpha_n))$}.
\end{equation}

In the sequel we consider $\alpha_n=(n+1)^{-a}$ as a function of $a$, and we analyze 
how the previous  bounds vary with $a$.
We first note that these bounds coincide if and only if $\alpha_n=\prod_{i=1}^{n-1}(1-\alpha_i)$, 
that is to say, $\frac{1}{(n+1)^a}=\prod_{i=1}^{n-1}(1-\frac{1}{(i+1)^a})$. 
When $a$ increases from 0 to 1 the left-hand side decreases from 1 to $\frac{1}{n+1}$
whereas the expression on the right increases from 0  to $\frac{1}{n}$. Thus, there is a 
unique $a_n\in(0,1)$ at which both bounds coincide: the bound \eqref{Eq:Cota1} is smaller
than \eqref{Eq:Cota2} for $a\in[0,a_n]$, and conversely for  $a\in[a_n,1]$.

We will show that both bounds are smaller than 1 in the corresponding intervals, or equivalently,
that their logarithms are smaller than 0. 
To this end we exploit the following properties (established later in separate Lemmas)

\begin{itemize}
\item[a)] $u_n(a)\triangleq a\ln(\frac{n+2}{n+1})+\ln(1-\prod_{i=1}^n(1-\frac{1}{(i+1)^{a}}))$ is concave,
\item[b)]  $v_n(a)\triangleq a\ln(\frac{n+2}{n+1})+\ln(1-\frac{1}{(n+1)^{a}}+\frac{1}{(n+1)^{2a}})$ first decreases and then increases.
\end{itemize}
Since  clearly $u_n(0)=v_n(0)=0$, the conclusion follows directly if we show that the common 
value $u_n(a_n)=v_n(a_n)$ at the intersect is negative. Linearizing the concave function $u_n(\cdot)$ it suffices to show that 
\begin{equation}\label{Eq:un_negativo}
 u_n(1) + u_n'(1)(a_n-1)\leq 0.
 \end{equation}
A direct computation gives $u_n(1)=\ln(\frac{(n+2)n}{(n+1)^2})$ and
$u_n'(1)= \ln(\frac{n+2}{n+1})-\frac{1}{n}\sum_{i=1}^n\frac{\ln(i+1)}{i}$, while
an inductive argument shows that $u_n'(1)$ decreases in $n$ with $u_2(1)<0$ so that $u_n'(1)< 0$.
On the other hand, we observe that $a_n\geq 1-b_n$ with
$$b_n=\mbox{$\frac{\ln(n+1)-\ln(n)}{\ln(n+1)+\sum_{i=1}^{n-1}\frac{\ln{i+1}}{i}}$}$$
which follows by taking logarithms in the equation that defines $a_n$, and using the next inequality obtained
by linearizing the concave function $w_n(a)=\sum_{i=1}^{n-1}\ln(1-\frac{1}{(i+1)^a})$
$$-a_n\ln(n+1)=w_n(a_n)\leq w_n(1)+w_n'(1)(a_n-1)=\mbox{$-\ln(n)+(\sum_{i=1}^{n-1}\frac{\ln{i+1}}{i})(a_n-1)$}.$$
Hence, \eqref{Eq:un_negativo} will follow by showing that $u_n(1)- u_n'(1)b_n \leq 0$, which is done in  \cref{Lemma:Final} below.
\end{proof}

The next two Lemmas establish respectively the claims a) and b) used in the previous proof.
\begin{lemma}
$u_n(a)\triangleq a\ln(\frac{n+2}{n+1})+\ln(1-\prod_{i=1}^n(1-\frac{1}{(i+1)^{a}}))$ is concave for all $n\geq 1$.
\end{lemma}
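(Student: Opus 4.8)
The plan is to discard the affine part of $u_n$ and reduce the concavity of what remains to a single explicit scalar inequality, which is then checked by hand using crucially that the exponent lies in the range $a\in(0,1]$ relevant to this appendix (for large $a$ the statement is in fact false, so the restriction must be used). Since $a\mapsto a\ln\frac{n+2}{n+1}$ is affine, it suffices to show that $\phi_n(a)\triangleq\ln\big(1-P_n(a)\big)$ is concave, where $P_n(a)=\prod_{i=1}^n(1-p_i)$, $p_i=p_i(a)=(i+1)^{-a}$; I also set $c_i=\ln(i+1)$ and $L(a)=\ln P_n(a)=\sum_{i=1}^n\ln(1-p_i)$.

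First I would carry out the routine differentiation. With $g=1-P_n$ and $P_n=e^{L}$ one has $P_n'=L'P_n$, $P_n''=(L''+(L')^2)P_n$, hence $g''g-(g')^2=P_n\big(L''(P_n-1)-(L')^2\big)$ and
\[
\phi_n''=\frac{g''g-(g')^2}{g^2}=\frac{P_n}{(1-P_n)^2}\Big(L''(P_n-1)-(L')^2\Big).
\]
Since $0<P_n<1$ and $L''\le0$ (each $a\mapsto\ln(1-e^{-c_ia})$ is concave, as $\tfrac{c_ip_i}{1-p_i}=\tfrac{c_i}{e^{c_ia}-1}$ decreases in $a$), concavity of $\phi_n$ is equivalent to $(1-P_n)(-L'')\le(L')^2$. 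Using $p_i'=-c_ip_i$ one gets $L'=\sum_i\tfrac{c_ip_i}{1-p_i}$ and $-L''=\sum_i\tfrac{c_i^2p_i}{(1-p_i)^2}$, so the lemma reduces to the scalar inequality
\[
\Big(1-\prod_{i=1}^n(1-p_i)\Big)\sum_{i=1}^n\frac{c_i^2p_i}{(1-p_i)^2}\;\le\;\Big(\sum_{i=1}^n\frac{c_ip_i}{1-p_i}\Big)^{2}.
\]

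Proving this inequality is the main obstacle. Here the hypothesis $a\le1$ is essential: it gives $p_i\ge(i+1)^{-1}$, equivalently $\tfrac{p_i}{1-p_i}\ge\tfrac1i$, and $1-p_1\le\tfrac12$ hence $P_n\le\tfrac12$. Note that plain Cauchy--Schwarz only yields $(L')^2\le(-L'')\sum_ip_i$, which is the wrong direction, so the proof must exploit that $1-\prod_i(1-p_i)$ is strictly smaller than $\sum_ip_i$. I would argue by induction on $n$, writing the target as $(-L_n'')\le R_n^2/(1-P_n)$ with $R_n=L_n'$. In the step $n-1\to n$, substituting $1-P_n=(1-P_{n-1})+p_nP_{n-1}$ and the inductive bound $(-L_{n-1}'')\le R_{n-1}^2/(1-P_{n-1})$, everything collapses after clearing denominators to the quadratic inequality
\[
\frac{(1-p_n)P_{n-1}}{1-P_{n-1}}\,R_{n-1}^2-2c_nR_{n-1}+c_n^2(1-P_{n-1})\le0,
\]
whose discriminant equals $4c_n^2(1-P_n)>0$. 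Since $1-\sqrt{1-P_n}\le P_n=(1-p_n)P_{n-1}$, the smaller root is at most $c_n(1-P_{n-1})$ and the larger one is comfortably large, so the step follows once one knows $R_{n-1}\ge c_n(1-P_{n-1})$, i.e. $L_{n-1}'(a)\ge\ln(n+1)\,(1-P_{n-1}(a))$ for $a\in(0,1]$. This last auxiliary bound — which again rests on $\tfrac{p_i}{1-p_i}\ge\tfrac1i$ together with the precise (small) size of $1-P_{n-1}$, and which one must verify carefully for small $n$ rather than through the crude estimate $\sum_{i<n}\tfrac{\ln(i+1)}{i}\ge\ln(n+1)$ — plus a direct check of the base case, is what I expect to demand the most work.
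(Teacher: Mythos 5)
Your reduction is sound and is in fact the same as the paper's: after discarding the affine term, the concavity of $\phi_n$ is exactly the inequality $(1-P_n)(-L'')\le (L')^2$, which in the paper's notation reads $S_1^2\ge(1-g_n)S_2$ with $S_1=\sum_{i=1}^n\frac{\ln(i+1)}{(i+1)^a-1}$ and $S_2=\sum_{i=1}^n\frac{(i+1)^a\ln^2(i+1)}{((i+1)^a-1)^2}$. From there your route does differ from the paper (which splits $S_1^2-(1-g_n)S_2$ term by term and uses that $\frac{\ln(i+1)}{1-(i+1)^{-a}}$ increases in $i$), and your induction algebra is correct: the step does collapse to $\frac{(1-p_n)P_{n-1}}{1-P_{n-1}}R_{n-1}^2-2c_nR_{n-1}+c_n^2(1-P_{n-1})\le0$, the discriminant is $4c_n^2(1-P_n)$, and the smaller root is indeed at most $c_n(1-P_{n-1})$.

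However, the proposal stops precisely at the lemma's core difficulty, and that is a genuine gap. Your unproven auxiliary bound $R_{n-1}=L_{n-1}'(a)\ge\ln(n+1)\,(1-P_{n-1}(a))$ is, after substituting $R_n=R_{n-1}+\frac{c_np_n}{1-p_n}$ and $1-P_n=(1-P_{n-1})+p_nP_{n-1}$, literally equivalent to the paper's key inequality \eqref{Eq:Dec1}, and establishing it is where essentially all of the paper's work lies: a case analysis over $n=2$, $n=3$ and $n\ge4$, in which only for $n\ge4$ does the monotonicity-in-$a$ reduction to $\sum_{i=1}^{n-1}\frac{\ln(i+1)}{i}\ge\ln(n+1)$ suffice, while $n=2,3$ need separate arguments exploiting the factor $1-P_{n-1}<1$. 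You correctly flag this as the hardest remaining step, but you do not carry it out, so the proof is incomplete exactly where the real content of the lemma sits. A secondary omission: you also need $R_{n-1}\le r_+$ for the quadratic to be nonpositive, which you dismiss as the larger root being "comfortably large"; it is true but requires an argument, e.g. from $1-P_{n-1}=\sum_{i=1}^{n-1}p_i\prod_{j<i}(1-p_j)\ge P_{n-1}\sum_{i=1}^{n-1}\frac{p_i}{1-p_i}$ one gets $R_{n-1}P_n\le (1-p_n)c_{n-1}(1-P_{n-1})\le c_n(1-P_{n-1})\le r_+P_n$, and this should be written out.
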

\begin{proof}
For $n=1$ this holds trivially since $u_1(a)= a\ln(\frac{3}{4})$, so let us consider the case $n\geq 2$.
Ignoring the linear term $a\ln(\frac{n+2}{n+1})$, it suffices to show the concavity of
$w_n(a)\triangleq \ln(1-g_n(a))$ where $g_n(a)\triangleq \prod_{i=1}^n(1-(i+1)^{-a})$.
Let us first observe that 
$$(\ln g_n)'(a)=\mbox{$\sum_{i=1}^n\frac{\ln(i+1)}{(i+1)^{a}-1}\triangleq S_1(a)\geq 0$}$$
so that $g_n'=g_n\,S_1$. Setting $S_2(a)=-S_1'(a)=\mbox{$\sum_{i=1}^n\frac{(i+1)^{a}\ln^2(i+1)}{((i+1)^{a}-1)^2}$}$ 
we also get
$$g_n''=g_n'\,S_1+g_n\,S_1'=g_n\,(S_1^2-S_2).$$
Using these identities  and $w_n'=-g_n'/(1-g_n)$ we obtain
\begin{eqnarray*}
w_n''&=&-\mbox{$\frac{1}{(1-g_n)^2}[g_n''(1-g_n)+(g_n')^2]$}\\
&=&-\mbox{$\frac{g_n}{(1-g_n)^2}[S_1^2-(1-g_n)S_2].$}\\
&=&-\mbox{$\frac{g_n}{(1-g_n)^2}\sum_{i=1}^n\frac{\ln(i+1)}{(i+1)^a-1}[S_1-(1-g_n)\frac{\ln(i+1)}{1-(i+1)^{-a}}].$}
\end{eqnarray*}
We claim  that all the terms $S_1-(1-g_n)\frac{\ln(i+1)}{1-(i+1)^{-a}}$ in the latter sum 
are non-negative. As a matter of fact, since the expression $\frac{\ln(i+1)}{1-(i+1)^{-a}}$
increases with $i$, it suffices to show  
\begin{equation}\label{Eq:Dec1}
\mbox{$S_1\geq (1-g_n)\frac{\ln(n+1)}{1-(n+1)^{-a}}\qquad(\forall\,n\geq 2),$}
\end{equation}
for which we consider 3 separate cases.

\vspace{1ex}
\noindent\underline{\sc Case $n=2$}. In this case the inequality \eqref{Eq:Dec1} reads 
$$\mbox{$\frac{\ln(2)}{2^a-1} + \frac{\ln(3)}{3^a-1} $}\geq\mbox{$ \frac{(1-(1-2^{-a})(1- 3^{-a})) \ln(3)}{(1 - 3^{-a})}$}=\mbox{$(\frac{1}{3^a-1}+\frac{1}{2^{a}}) \ln(3)$}$$
which is equivalent to $2^{-a}\geq 1- \frac{\ln(2)}{\ln(3)}$ and clearly holds for all $a\in [0,1]$.

\vspace{1ex}
\noindent\underline{\sc Case $n=3$}. Here \eqref{Eq:Dec1} becomes
$$\mbox{$\frac{\ln (2)}{2^a-1}+\frac{\ln (3)}{3^a-1}+\frac{\ln (4)}{4^a-1}$}\geq\mbox{$ \frac{\left(1-(1-2^{-a})(1-3^{-a})(1-4^{-a})\right) \ln (4)}{(1-4^{-a})}$}=\mbox{$( \frac{1}{4^a-1}+\frac{1}{2^{a}}+\frac{1}{3^{a}}-\frac{1}{6^{a}}) \ln (4)$}
$$
which can be rewritten as
$$\mbox{$\frac{\ln (2)}{2^a-1}+\frac{\ln (3)}{3^a-1}$}\geq\mbox{$ (\frac{1}{2^{a}}-\frac{1}{2} \frac{1}{6^a})\ln (4)+(\frac{1}{3^{a}}-\frac{1}{2} \frac{1}{6^a}) \ln (4)$}.$$
This follows by combining the inequalities $\frac{\ln (2)}{2^a-1}\geq (\frac{1}{2^{a}}-\frac{1}{2} \frac{1}{6^a})\ln (4)$ and
$\frac{\ln (3)}{3^a-1}\geq (\frac{1}{3^{a}}-\frac{1}{2} \frac{1}{6^a}) \ln (4)$, both of which can be checked
by noting that they hold for $a=1$ and observing that  $(2^a-1)(\frac{1}{2^{a}}-\frac{1}{2} \frac{1}{6^a})$ and
$(3^a-1)(\frac{1}{3^{a}}-\frac{1}{2} \frac{1}{6^a})$ are increasing with $a$. 

\vspace{1ex}
\noindent\underline{\sc Case $n\geq 4$}. We will prove the stronger inequality $S_1\geq \frac{\ln(n+1)}{1-(n+1)^{-a}}$, which we rewrite
$$\mbox{$\sum _{i=1}^{n-1} \frac{\ln (i+1)}{(i+1)^a-1}\geq \frac{\ln (n+1)}{1-(n+1)^{-a}}-\frac{\ln (n+1)}{(n+1)^a-1}=\ln (n+1)$}.$$
Since $\sum _{i=1}^{n-1} \frac{\ln (i+1)}{(i+1)^a-1}$ decreases with $a$, it suffces to check at $a=1$,
that is to say
$$\mbox{$\sum _{i=1}^{n-1} \frac{\ln (i+1)}{i}\geq \ln (n+1),$}$$
which readily follows by induction by showing that $\Var_n\triangleq \sum _{i=1}^{n-1} \frac{\ln (i+1)}{i}-\ln (n+1)$ is 
increasing for $n\geq 4$ with $\Var_4>0$.
\end{proof}

\begin{lemma}$v_n(a)\triangleq a\ln(\frac{n+2}{n+1})+\ln(1-\frac{1}{(n+1)^{a}}+\frac{1}{(n+1)^{2a}})$ first decreases and then increases
\end{lemma}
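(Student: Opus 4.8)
The plan is to differentiate $v_n$ in $a$ and show that $v_n'$ changes sign exactly once, from negative to positive. Writing $L=\ln(n+1)$, $M=\ln\frac{n+2}{n+1}$ and $t=(n+1)^{-a}$, a direct computation gives
\[
v_n'(a)=M+\frac{L\,t\,(1-2t)}{1-t+t^{2}}=\frac{\phi(t)}{1-t+t^{2}},\qquad
\phi(t)\triangleq M+(L-M)\,t+(M-2L)\,t^{2}.
\]
Since $1-t+t^{2}=(t-\tfrac{1}{2})^{2}+\tfrac{3}{4}>0$, the sign of $v_n'(a)$ is the sign of the quadratic $\phi(t)$, and as $a$ ranges over $(0,\infty)$ the quantity $t=(n+1)^{-a}$ runs monotonically over the whole interval $(0,1)$.

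Next I would study $\phi$ on $(0,1)$. One has $\phi(0)=M>0$ and $\phi(1)=M-L=\ln\frac{n+2}{(n+1)^{2}}<0$, the latter because $(n+1)^{2}-(n+2)=n^{2}+n-1>0$ for $n\geq 1$. A quadratic cannot have two roots $r_{1},r_{2}$ in $(0,1)$ while taking opposite signs at $0$ and $1$: writing $\phi(t)=\kappa(t-r_{1})(t-r_{2})$, both $r_{1}r_{2}$ and $(1-r_{1})(1-r_{2})$ would be positive, so $\phi(0)$ and $\phi(1)$ would share the sign of $\kappa$. Hence, by the intermediate value theorem, $\phi$ has a unique root $t^{*}\in(0,1)$, with $\phi>0$ on $[0,t^{*})$ and $\phi<0$ on $(t^{*},1]$.

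Finally I would transfer this back to the variable $a$. Since $a\mapsto t=(n+1)^{-a}$ is a strictly decreasing bijection onto $(0,1)$, the preimage of $t^{*}$ is a single point $a^{*}=-\ln t^{*}/\ln(n+1)>0$; for $a\in(0,a^{*})$ one has $t\in(t^{*},1)$, so $v_n'(a)<0$, and for $a>a^{*}$ one has $t\in(0,t^{*})$, so $v_n'(a)>0$. Thus $v_n$ strictly decreases on $(0,a^{*})$ and strictly increases afterwards, which is the claim (and if $a^{*}\geq 1$ then $v_n$ simply decreases on the range $(0,1]$ relevant for its later use). The only point requiring a moment's care is ruling out a second sign change of $v_n'$, which is precisely the two-roots argument above; everything else is the routine sign analysis of a quadratic, so no real obstacle arises.
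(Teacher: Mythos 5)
Your proof is correct and follows essentially the same route as the paper: differentiate $v_n$ and reduce the sign of $v_n'$ to that of a quadratic in the exponentiated variable, which changes sign exactly once. The only cosmetic difference is that you work with $t=(n+1)^{-a}$ and locate the unique sign change via endpoint signs and the intermediate value theorem, whereas the paper uses $z=(n+1)^a$ and solves the quadratic explicitly for its positive root.
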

\begin{proof}
A direct computation shows that $v_n'(a)\leq 0$ is equivalent to
$$\mbox{$\frac{2-(n+1)^a}{(n+1)^{2a}-(n+1)^a+1}\geq\frac{\ln(n+2)}{\ln(n+1)}-1.$}$$
Setting $c_n=1/(\frac{\ln(n+2)}{\ln(n+1)}-1)$ and $z=(n+1)^a$ this is transformed into
$$ z^2+(c_n-1)z+(1-2c_n)\leq 0$$
Since $c_n>1$ the latter holds if and only if $z\leq z_n$ with $z_n=\frac{1}{2}(1-c_n+\sqrt{c_n^2+6c_n-3})$ the positive root of the quadratic.
Therefore, $v_n(a)$ decreases for $a\leq\frac{\ln(z_n)}{\ln(n+1)}$ and increases afterwards.
\end{proof}

\begin{lemma}\label{Lemma:Final}
$u_n(1)- u_n'(1)b_n \leq 0$.
\end{lemma}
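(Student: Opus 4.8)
The plan is to divide by $u_n'(1)$, reduce the claim to an elementary estimate on the partial sums $B_n\triangleq\sum_{i=1}^n\frac{\ln(i+1)}{i}$, and then dispatch that estimate by an integral comparison plus a handful of direct checks for small $n$.

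First I would record the signs: from the preceding part of the proof we know $u_n'(1)<0$, and $u_n(1)=\ln\frac{n(n+2)}{(n+1)^2}<0$ while $b_n>0$. Dividing the target inequality by $u_n'(1)$ reverses it, so it is equivalent to $|u_n(1)|\geq|u_n'(1)|\,b_n$. Substituting the closed forms $|u_n(1)|=\ln\frac{(n+1)^2}{(n+1)^2-1}$, $|u_n'(1)|=\frac{B_n}{n}-\ln\frac{n+2}{n+1}$, $b_n=\frac{\ln\frac{n+1}{n}}{\ln(n+1)+B_{n-1}}$, and using $B_{n-1}=B_n-\frac{\ln(n+1)}{n}$ to write the denominator as $B_n+\frac{(n-1)\ln(n+1)}{n}$, then clearing that (positive) denominator, the claim becomes
$$\Big(\ln\frac{(n+1)^2}{(n+1)^2-1}\Big)\Big(B_n+\frac{(n-1)\ln(n+1)}{n}\Big)\ \geq\ \Big(\frac{B_n}{n}-\ln\frac{n+2}{n+1}\Big)\ln\frac{n+1}{n}.$$
Next I would estimate the two sides oppositely: on the left use $\ln(1+x)\geq\frac{x}{1+x}$ with $x=\frac{1}{(n+1)^2-1}$ to get $\ln\frac{(n+1)^2}{(n+1)^2-1}\geq\frac{1}{(n+1)^2}$; on the right drop the positive term $\ln\frac{n+2}{n+1}$ and use $\ln\frac{n+1}{n}\leq\frac1n$, bounding the right side above by $\frac{B_n}{n^2}$. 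A one-line computation then shows it suffices to prove
$$B_n\ \leq\ \frac{n(n-1)\ln(n+1)}{2n+1}.$$

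To finish I would: (i) check the original inequality $u_n(1)-u_n'(1)b_n\leq 0$ directly for the few small values $n\in\{2,3,4\}$ (the values are negative, of order $-0.04,-0.03,-0.02$); (ii) for $n\geq 5$ bound $B_n$ by an integral, using that $x\mapsto\frac{\ln(x+1)}{x}$ is decreasing on $[1,\infty)$ (its derivative has numerator $\frac{x}{x+1}-\ln(x+1)$, which is negative at $x=1$ and decreasing), so $B_n\leq\ln2+\int_1^n\frac{\ln(x+1)}{x}\,dx\leq\ln2+1+\frac{\ln^2 n}{2}$; then verify $\ln2+1+\frac{\ln^2 n}{2}\leq\frac{n(n-1)\ln(n+1)}{2n+1}$ for $n\geq 5$, e.g. by replacing the right-hand side with the smaller $\frac{5(n-1)\ln(n+1)}{11}$ (legitimate since $2n+1\leq\frac{11}{5}n$ when $n\geq5$), checking the inequality at $n=5$, and noting that for $n\geq5$ the derivative $\frac{\ln n}{n}$ of the left side is dominated by the derivative $\frac{5}{11}\big(\ln(n+1)+\frac{n-1}{n+1}\big)$ of the right side.

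The main obstacle is the delicacy of the reduction rather than the final estimate: $|u_n(1)|$ and $|u_n'(1)|\,b_n$ are \emph{both} asymptotically of order $n^{-2}$, so the inequality has vanishing relative slack and the passage to $B_n\leq\frac{n(n-1)\ln(n+1)}{2n+1}$ must avoid wasteful bounds (keeping $\frac{1}{(n+1)^2}$ as the lower bound for $|u_n(1)|$ and $\frac1n$ for $\ln\frac{n+1}{n}$ is essential, and one cannot afford to also discard $\frac{(n-1)\ln(n+1)}{n}$ on the left). Even so, the convenient reduced inequality $B_n\leq\frac{n(n-1)\ln(n+1)}{2n+1}$ genuinely fails for $n=2,3$, which is why those cases (and the near-tight $n=4$) must be verified numerically; once $n\geq5$ the margin is ample and the remaining work is routine calculus.
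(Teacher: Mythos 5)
Your proposal is correct — I checked the algebraic reduction and the numerical values you quote ($u_n(1)-u_n'(1)b_n\approx-0.042,-0.027,-0.018$ for $n=2,3,4$, and the reduced inequality at $n=4,5$) — but it takes a genuinely different route from the paper. The paper rearranges the claim into $r_n\leq Q_n$, where $r_n=\frac1n\sum_{i=1}^n\frac{\ln(i+1)}{i}$ and $Q_n$ is an explicit ratio of logarithmic expressions; it then lower-bounds $Q_n\geq q_n$ via $\ln b\geq 1-\frac1b$, checks $r_2\leq q_2$ by direct evaluation, and finishes by monotonicity ($r_n$ decreases as the average of a decreasing sequence, while $q_n=f(n)/g(n)$ increases with $f$ increasing and $g$ decreasing), which handles all $n\geq2$ uniformly with a single small-case check. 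You instead clear denominators, keep the sharp bounds $\ln\frac{(n+1)^2}{(n+1)^2-1}\geq\frac{1}{(n+1)^2}$ and $\ln\frac{n+1}{n}\leq\frac1n$, and reduce to $(2n+1)B_n\leq n(n-1)\ln(n+1)$ with $B_n=\sum_{i=1}^n\frac{\ln(i+1)}{i}$; since that reduced inequality fails at $n=2,3$ and is tight at $n=4$, you patch those cases numerically and settle $n\geq5$ with an integral estimate of $B_n$ plus a derivative comparison. What each buys: the paper's argument avoids case analysis at the price of a separate monotonicity verification for $q_n$; yours is more mechanical once $n\geq5$ (the comparison $B_n\sim\frac12\ln^2 n$ versus $\sim\frac{n\ln n}{2}$ has enormous slack) but needs three explicit checks of the original inequality, and your diagnosis of where care is needed is accurate — the original inequality has relative margin only $O(1/\ln n)$, so retaining the term $\frac{(n-1)\ln(n+1)}{n}$ and the two sharp logarithm bounds is essential. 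Two minor remarks: the cleared-denominator inequality is equivalent to the claim regardless of the sign of $u_n'(1)$, so you need not import $u_n'(1)<0$ from the surrounding proof (though it is legitimately available there, being established before the lemma is invoked); and at $n=1$ the statement holds with equality (the paper notes $r_1=Q_1$), which is worth one line since the lemma as stated carries no explicit range, even though it is only used for $n\geq2$.
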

\begin{proof} Substituting $u_n(1)=\ln(\frac{(n+2)n}{(n+1)^2})$, $u_n'(1)= \ln(\frac{n+2}{n+1})-\frac{1}{n}\sum_{i=1}^n\frac{\ln(i+1)}{i}$,
and $b_n=\frac{\ln(n+1)-\ln(n)}{\ln(n+1)+\sum_{i=1}^{n-1}\frac{\ln{i+1}}{i}}$,
the inequality $u_n(1)- u_n'(1)b_n \leq 0$ can be rewritten as $r_n\leq Q_n$ where
\begin{eqnarray*}
r_n&\triangleq&\mbox{$\frac{1}{n}\sum _{i=1}^n \frac{\ln (i+1)}{i}$}\\
Q_n&\triangleq&
\mbox{$ \frac{\left(1-\frac{1}{n}\right) \ln (n+1) \ln \left(\frac{(n+1)^2}{n (n+2)}\right)+\ln \left(\frac{n+2}{n+1}\right) \ln \left(\frac{n+1}{n}\right)}{\ln \left(\frac{n+1}{n}\right)-n \ln \left(\frac{(n+1)^2}{n (n+2)}\right)}$}.
\end{eqnarray*}
Since $r_1=\ln 2=Q_1$ we just need to show  that $r_n\leq Q_n$ for $n\geq 2$.
Now, using the inequality $\ln b\geq 1-\frac{1}{b}$ we get the lower bound 
$$\mbox{$
Q_n\geq q_n\triangleq \frac{\frac{n-1}{n} \ln (n+1)+\frac{n+1}{n+2}}{(n+1)^2 \ln \left(\frac{n+1}{n}\right)-n}
$}$$
so that it suffices to show that $r_n\leq q_n$. By direct evaluation one can check that $r_2\leq q_2$ 
so the conclusion follows from the fact that  $r_n$ decreases while $q_n$ increases.
Indeed, $r_n$ decreases since it is the average of the decreasing sequence $\frac{\ln(i+1)}{i}$,
whereas $q_n=\frac{f(n)}{g(n)}$ with 
$f(x)=\frac{x-1}{x} \ln (x+1)+\frac{x+1}{x+2}$ positive increasing and 
$g(x)=(x+1)^2 \ln \left(\frac{x+1}{x}\right)-x$ is positive and decreasing for $x\geq 2$. 
\end{proof}

\subsection{Bounds for  \texorpdfstring{$\frac{\delta_{n-1}}{1-\alpha_n}$}{Lg} in terms of  \texorpdfstring{$\tau_n$}{Lg}}
We proceed next to establish explicit bounds of the form $\delta_{n-1}\leq(1-\alpha_n)\varphi(\tau_n)$
required to apply  \cref{Thm:convolution_bound}. Since  \cref{Lemma:L3} gives
$\delta_{n-1}\leq\sqrt{\alpha_n}$, we will look for suitable functions $\varphi(\cdot)$ such that 
$$\frac{\delta_{n-1}}{1-\alpha_n}\leq \frac{\sqrt{\alpha_n}}{1-\alpha_n}\leq\varphi(\tau_n)\qquad(\forall\,n\geq 2).$$
We restrict to $a\in[\frac{1}{2},1]$ and treat separately the cases $\frac{1}{2}\leq a<1$ and $a=1$.
\subsubsection{Bounds for \texorpdfstring{$\frac{1}{2}\leq a<1$}{}}\label{appendix21}
Approximating the sum $\tau _n=\sum _{k=1}^n \alpha _k(1-\alpha_k)$ by an integral of $\frac{1}{(x+1)^{a}}(1-\frac{1}{(x+1)^{a}})$ one can 
check that $\tau_n\approx\frac{ 1}{1-a}(n+1)^{1-a}$ which yields $\alpha_n\approx \left((1-a)\tau_n\right)^{-\frac{a}{1-a}}$.
This prompts us to look for an estimate of the form
\begin{equation}\label{Eq:cotaalpha}
\mbox{$\frac{\delta_{n-1}}{1-\alpha_n}\leq \frac{\sqrt{\alpha_n}}{1-\alpha_n}\leq \varphi(\tau_n)=(\lambda_a+(1-a)\, \tau_n)^{-\frac{a}{2 (1-a)}}\qquad(\forall\,n\geq 2)$}
\end{equation}
with the largest possible constant $\lambda_a$, which requires 
$$\mbox{$\lambda_a\leq\xi_n\triangleq \left(\frac{1-\alpha_n}{\sqrt{\alpha_n}}\right)^{\!\!\frac{2 (1-a)}{a}}\!\!-(1\!-\!a)\, \tau_n\qquad(\forall\,n\geq 2).$}$$
\begin{lemma}\label{Lemma:kappa} The sequence $\xi_n$ is increasing and \eqref{Eq:cotaalpha} holds with optimal constant
$$\mbox{$\lambda_a=\xi_2=\left(3^{a/2}-3^{-a/2}\right)^{\frac{2 (1-a)}{a}} -(1\!-\!a) \,\tau_2(a)$.}$$
\end{lemma}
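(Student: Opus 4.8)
The plan is to reduce the claim to the single fact that $(\xi_n)_{n\ge 2}$ is nondecreasing, and then to read off $\lambda_a$ as the first term of that sequence. First, since the exponent $-\tfrac{a}{2(1-a)}$ is negative, raising both sides of the desired bound $\tfrac{\sqrt{\alpha_n}}{1-\alpha_n}\le(\lambda_a+(1-a)\tau_n)^{-a/(2(1-a))}$ to the power $-\tfrac{2(1-a)}{a}$ turns it into the \emph{equivalent} inequality $\lambda_a\le\big(\tfrac{1-\alpha_n}{\sqrt{\alpha_n}}\big)^{2(1-a)/a}-(1-a)\tau_n=\xi_n$; together with $\delta_{n-1}\le\sqrt{\alpha_n}$ from \cref{Lemma:L3}, this is exactly what is needed to feed \cref{Thm:convolution_bound}. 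Hence \eqref{Eq:cotaalpha} holds for all $n\ge 2$ if and only if $\lambda_a\le\inf_{n\ge 2}\xi_n$, so it remains to show $\xi_{n+1}\ge\xi_n$ for $n\ge 2$; then $\inf_{n\ge 2}\xi_n=\xi_2$, the constant is optimal, and evaluating at $\alpha_2=3^{-a}$ (so that $\tfrac{1-\alpha_2}{\sqrt{\alpha_2}}=3^{a/2}(1-3^{-a})=3^{a/2}-3^{-a/2}$) yields the announced formula $\lambda_a=(3^{a/2}-3^{-a/2})^{2(1-a)/a}-(1-a)\tau_2(a)$.

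To prove monotonicity, using $\tau_{n+1}-\tau_n=\alpha_{n+1}(1-\alpha_{n+1})$ and the substitution $m=n+1$, $\alpha_n=m^{-a}$, one checks that $\big(\tfrac{1-\alpha_n}{\sqrt{\alpha_n}}\big)^{2(1-a)/a}=h(m)$ where
\[
h(t)\triangleq\bigl(t^{a/2}-t^{-a/2}\bigr)^{2(1-a)/a},
\]
so that $\xi_{n+1}\ge\xi_n$ becomes $h(m+1)-h(m)\ge(1-a)\bigl((m+1)^{-a}-(m+1)^{-2a}\bigr)$ for $m\ge 3$. I would obtain this from two elementary facts. \emph{(a) A pointwise derivative bound.} Differentiation gives $h'(t)=(1-a)\,t^{-1}(t^{a/2}+t^{-a/2})(t^{a/2}-t^{-a/2})^{(2-3a)/a}$; since $t^{-a}-t^{-2a}=t^{-3a/2}(t^{a/2}-t^{-a/2})$, the inequality $h'(t)\ge(1-a)(t^{-a}-t^{-2a})$ reduces, after dividing by $(t^{a/2}-t^{-a/2})^{(2-3a)/a}$, to $t^{a/2-1}+t^{-a/2-1}\ge t^{-3a/2}(t^{a/2}-t^{-a/2})^{(4a-2)/a}$, and this holds for $t>1$ because $(t^{a/2}-t^{-a/2})^{(4a-2)/a}\le t^{2a-1}$ (here $a\ge\tfrac12$ keeps the exponent $\tfrac{4a-2}{a}$ nonnegative, so $t^{a/2}-t^{-a/2}\le t^{a/2}$ may be raised to that power). \emph{(b) Concavity of $h$ on $[3,\infty)$.} Computing $h''$ and simplifying with the identities $(t^{a/2}+t^{-a/2})^2-(t^{a/2}-t^{-a/2})^2=4$ and $(t^{a/2}+t^{-a/2})(t^{a/2}-t^{-a/2})=t^a-t^{-a}$ shows that $h''(t)\le 0$ if and only if $F(t)\triangleq a\,t^a-(2-a)\,t^{-a}+4a-2\ge 0$; since $F$ is increasing in $t$ and $F(3)$ is increasing in $a$ with $F(3)\big|_{a=1/2}=0$, we get $F(t)\ge F(3)\ge 0$ for $t\ge 3$, hence $h$ is concave on $[3,\infty)$.

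Combining (a) and (b): for $m\ge 3$, concavity gives $h(m+1)-h(m)\ge h'(m+1)$, and (a) gives $h'(m+1)\ge(1-a)\bigl((m+1)^{-a}-(m+1)^{-2a}\bigr)$, which is exactly the inequality needed; the smallest relevant index $n=2$ corresponds to $m=3$, so no separate base case arises. I expect step (b) to be the main obstacle: the $h''$ computation must be organized carefully, and it is essential to notice the sharp coincidence $F(3)\big|_{a=1/2}=0$ — this is what makes $[3,\infty)$ \emph{exactly} the interval of concavity (concavity already fails at $t=2$ for $a$ near $\tfrac12$, since $F(2)\big|_{a=1/2}<0$), and hence $\xi_2$ genuinely the minimum of the sequence rather than some later term. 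Step (a) is a routine, if slightly fiddly, rearrangement of powers, and the closing evaluation of $\xi_2$ is immediate.
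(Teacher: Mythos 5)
Your proof is correct and follows essentially the same route as the paper: you reduce the claim to monotonicity of $\xi_n$, establish it via concavity of the same auxiliary function $h=f$ on $[3,\infty)$ (equivalently, nonnegativity of $a\,t^{2a}+(4a-2)t^a-(2-a)$ there) combined with the pointwise bound $h'(t)\ge(1-a)\,(t^{-a}-t^{-2a})$, and then evaluate $\xi_2$. The only differences are cosmetic: you verify the derivative bound by the crude estimate $(t^{a/2}-t^{-a/2})^{(4a-2)/a}\le t^{2a-1}$ where the paper substitutes $x=(n+1)^a$ and reduces to the case $a=\tfrac12$, and your concavity check makes explicit the sharp equality $F(3)\big|_{a=1/2}=0$ that the paper only asserts through its root formula.
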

\begin{proof} We show that 
$\xi_n\geq\xi_{n-1}$ for $n\geq 3$. Denoting $f(x)=(x^{a/2}-x^{-a/2})^{\frac{2 (1-a)}{a}}$
we have
$$\mbox{$ \xi_n-\xi_{n-1}=f(n+1)-f(n)-(1-a)\alpha_n(1-\alpha_n).$}$$
A straightforward computation gives
\begin{eqnarray*}
f'(x)&=&\mbox{$\frac{(1-a) x^a (1-x^{-a})^{2/a} (x^a+1)}{(x^a-1)^3}$}\\
f''(x)&=&-\mbox{$\frac{(1-a) x^{a-1} (1-x^{-a})^{2/a} (ax^{2 a}+(4 a-2) x^a-(2-a))}{(x^a-1)^4}$}
\end{eqnarray*}
from where it follows that $f''(x)\leq 0$ for all $x\geq \left(\frac{1-2 a+\sqrt{1-2 a+3 a^2}}{a}\right)^{1/a}$\!\!.
This last expression is smaller than $3$ for all $a\in[\frac{1}{2},1]$, so that 
$f'(x)$ decreases for $x\geq 3$ and therefore
$$\mbox{$ f(n+1)-f(n)=\int_n^{n+1}f'(x)\,dx\geq f'(n+1).$}$$
The result will follow by proving that $f'(n+1)\geq(1-a)\alpha_n(1-\alpha_n)$, that is 
$$\mbox{$(1-a) \frac{ (n+1)^a (1-(n+1)^{-a})^{2/a} ((n+1)^a+1)}{((n+1)^a-1)^3}\geq (1-a)\frac{1}{(n+1)^a} (1-\frac{1}{(n+1)^a})$}.$$
Canceling the factor $(1-a)$ and setting $x=(n+1)^a$, this amounts to
$$\mbox{$\frac{ x (1-\frac{1}{x})^{2/a} (x+1)}{(x-1)^3}\geq \frac{1}{x} (1-\frac{1}{x})$}$$
which is in turn equivalent to
$$\mbox{$(\frac{x-1}{x})^{2/a}\geq \frac{(x-1)^4}{x^3(x+1)}$}.$$
The expression on the left increases with $a$ so it suffices to check at $a=\frac{1}{2}$, which
amounts to the trivial inequality $1\geq \frac{x}{x+1}$, completing the proof.
\end{proof}

\subsubsection{Bounds for \texorpdfstring{$a=1$}{1}}
For $a=1$, which corresponds to a simple averaging with $\alpha_n=\frac{1}{n+1}$, 
we have the following bound obtained as a limit of \eqref{Eq:cotaalpha}.
\begin{lemma}\label{Lemma:alfa1} Let $\omega=\frac{\sqrt{3}}{2}  \,e^{17/72}\approx 1.0966$. Then for $\alpha_n=\frac{1}{n+1}$ we have 
\begin{equation}\label{Eq:cotaalpha2}
\mbox{$\frac{\delta_{n-1}}{1-\alpha_n}\leq\frac{\sqrt{\alpha_n}}{1-\alpha_n}\leq\omega\, e^{-\tau_n/2}\qquad(\forall\,n\geq 2)$}
\end{equation}
\end{lemma}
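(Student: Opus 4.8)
The first inequality is immediate from \cref{Lemma:L3}, which gives $\delta_{n-1}^2\leq\alpha_n$ and hence $\delta_{n-1}\leq\sqrt{\alpha_n}$. The real content is therefore the second inequality, and the plan is to mimic the strategy used for \cref{Lemma:kappa}: exhibit a monotone auxiliary sequence whose extremal value over $n\geq 2$ is attained at $n=2$, the constant $\omega$ having been reverse‑engineered so as to force equality there.

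Since $\alpha_n=1/(n+1)$ we have $\sqrt{\alpha_n}/(1-\alpha_n)=\sqrt{n+1}/n$, so after taking logarithms the target bound $\sqrt{\alpha_n}/(1-\alpha_n)\leq\omega\,e^{-\tau_n/2}$ is equivalent to
\[
\psi_n\triangleq \tau_n-\ln\frac{n^2}{n+1}\leq 2\ln\omega\qquad(\forall\,n\geq 2).
\]
A direct evaluation using $\tau_2=\frac{17}{36}$ shows that $2\ln\omega=\ln\frac34+\frac{17}{36}=\psi_2$, so it suffices to prove that $(\psi_n)_{n\geq 2}$ is non‑increasing. Computing the increment and using $\tau_{n+1}-\tau_n=\alpha_{n+1}(1-\alpha_{n+1})=\frac{n+1}{(n+2)^2}$ gives
\[
\psi_{n+1}-\psi_n=\frac{n+1}{(n+2)^2}-\Bigl(2\ln\bigl(1+\tfrac1n\bigr)-\ln\bigl(1+\tfrac1{n+1}\bigr)\Bigr),
\]
so the claim reduces to $2\ln(1+\tfrac1n)-\ln(1+\tfrac1{n+1})\geq\frac{n+1}{(n+2)^2}$ for $n\geq 2$.

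To establish this I would bound the left‑hand side from below using the elementary estimates $\ln(1+x)\geq x-\frac{x^2}{2}$ and $\ln(1+x)\leq x$ for $x\geq 0$ (applied with the correct signs), which gives $2\ln(1+\tfrac1n)-\ln(1+\tfrac1{n+1})\geq\frac2n-\frac1{n^2}-\frac1{n+1}$. It then remains to check $\frac2n-\frac1{n^2}-\frac1{n+1}\geq\frac{n+1}{(n+2)^2}$; clearing the positive common denominator $n^2(n+1)(n+2)^2$, this collapses to the polynomial inequality $3n^3+6n^2-4\geq 0$, which holds for every $n\geq 1$. Hence $\psi_{n+1}\leq\psi_n$ for all $n\geq 2$, so $\psi_n\leq\psi_2=2\ln\omega$, which is exactly \eqref{Eq:cotaalpha2}. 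The computation is essentially routine once the reformulation in terms of $\psi_n$ is in place; the only point requiring care is to apply the two logarithm estimates in the directions that make the residual rational inequality reduce to a manifestly true polynomial one. As a consistency check, \eqref{Eq:cotaalpha2} is the formal $a\uparrow 1$ limit of \eqref{Eq:cotaalpha}: letting $a\to 1$ in $(\lambda_a+(1-a)\tau_n)^{-a/(2(1-a))}$, with $\lambda_a\to 1$ and $\lambda_a-1\sim(1-a)(\ln\frac43-\frac{17}{36})$, produces precisely $\omega\,e^{-\tau_n/2}$, but making that limiting argument rigorous is longer than the direct proof above.
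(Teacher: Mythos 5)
Your proof is correct, and it takes a genuinely different route from the paper. The paper obtains \eqref{Eq:cotaalpha2} as a limit of the general bound \eqref{Eq:cotaalpha}: it keeps $a<1$, writes $(\lambda_a+(1-a)\tau_n)^{-a/(2(1-a))}$ with $b_a=\lambda_a/(1-a)$, and lets $a\to 1$, using continuity of $\delta_{n-1}/(1-\alpha_n)$ and $\sqrt{\alpha_n}/(1-\alpha_n)$ in $a$ together with $\lambda_a\to 1$, $b_a\to\infty$ and $\lambda_a^{-a/(2(1-a))}\to\omega$ — exactly the limiting computation you sketch as a consistency check at the end. You instead prove the $a=1$ case directly: reduce the claim to $\psi_n=\tau_n-\ln\frac{n^2}{n+1}\leq 2\ln\omega$, observe equality at $n=2$ (your evaluations $\tau_2=\frac{17}{36}$ and $2\ln\omega=\ln\frac34+\frac{17}{36}$ check out), and show $\psi_n$ is non-increasing via $\ln(1+x)\geq x-\frac{x^2}{2}$, $\ln(1+x)\leq x$; I verified that clearing denominators does collapse the residual inequality to $3n^3+6n^2-4\geq 0$, so the monotonicity step is sound (it even holds from $n=1$). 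What each approach buys: the paper's argument is essentially free once \cref{Lemma:kappa} is in place and explains where $\omega$ comes from as the limit of $\lambda_a^{-a/(2(1-a))}$, but it rests on that earlier, more technical lemma and on justifying the limit; your argument is self-contained, entirely elementary, and additionally shows $\omega$ is optimal for this form of bound, since equality holds at $n=2$. Both use \cref{Lemma:L3} identically for the first inequality $\delta_{n-1}\leq\sqrt{\alpha_n}$ (valid for $n\geq 2$).
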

\begin{proof} Since $\frac{\delta_{n-1}}{1-\alpha_n}$ and $\frac{\sqrt{\alpha_n}}{1-\alpha_n}$ are continuous in $a$,
\eqref{Eq:cotaalpha2} follows by letting $a\to 1$ in \eqref{Eq:cotaalpha}. To see this 
let us set $b_a\triangleq \frac{\lambda_a}{1-a}$ and
rewrite the right hand side of  \eqref{Eq:cotaalpha} as
$$ \mbox{$(\lambda_a+(1-a) \tau_n)^{-\frac{a}{2 (1-a)}}=\lambda_a^{-\frac{a}{2 (1-a)}}\left((1+ \frac{\tau_n}{b_a})^{-b_a}\right)^{\frac{a}{2\lambda_a}}$}.$$
For $a\to 1$ we have $\lambda_a\to 1$, hence $b_a\to\infty$ and $\frac{a}{2\lambda_a}\to \frac{1}{2}$, and therefore
\[
\left((1+ {\tau_n}/{b_a})^{-b_a}\right)^{\frac{a}{2\lambda_a}}\to e^{-\tau_n/2}.
\]
The proof is completed by a straightforward calculation that yields $\lambda_a^{-\frac{a}{2 (1-a)}}\to\omega$. 
\end{proof}

\vspace{2ex}
\begin{lemma}\label{Lemma:L41} 
Let $\tau_n(a)=\sum_{k=1}^n\frac{1}{(k+1)^a}(1-\frac{1}{(k+1)^a})$.
Then $\tau_n(a)\geq\tau_1(a)\;n^{1-a}$ for all $\frac{1}{2}\leq a<1$, while for $a=1$
we have $\tau_n(1)\geq\frac{1}{4\ln(2)}\ln(n+1)$.
\end{lemma}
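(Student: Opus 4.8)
The plan is to reduce both estimates to a single term-by-term inequality that telescopes. Writing $\alpha_k=(k+1)^{-a}$, we have $\tau_n(a)=\sum_{k=1}^n\alpha_k(1-\alpha_k)$, while $\sum_{k=1}^n\big(k^{1-a}-(k-1)^{1-a}\big)=n^{1-a}$ and $\sum_{k=1}^n\big(\ln(k+1)-\ln k\big)=\ln(n+1)$. Hence it suffices to prove, for every $k\geq 1$,
\[
\alpha_k(1-\alpha_k)\geq\tau_1(a)\big(k^{1-a}-(k-1)^{1-a}\big)\qquad\Big(\tfrac12\leq a<1\Big),
\]
respectively
\[
\alpha_k(1-\alpha_k)=\frac{k}{(k+1)^2}\geq\frac{1}{4\ln 2}\big(\ln(k+1)-\ln k\big)\qquad(a=1).
\]
In both cases the term $k=1$ holds with equality, since $\tau_1(a)=\alpha_1(1-\alpha_1)$ and $\tfrac14=\tfrac{\ln 2}{4\ln 2}$, so the real content is the range $k\geq 2$.

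For $\tfrac12\leq a<1$ and $k\geq 2$ I would bound the right-hand side from above using the concavity of $x\mapsto x^{1-a}$: the mean value theorem gives $k^{1-a}-(k-1)^{1-a}\leq(1-a)(k-1)^{-a}$. After multiplying through by $(k+1)^a$ it then suffices to check
\[
1-\frac{1}{(k+1)^a}\geq\tau_1(a)(1-a)\Big(\frac{k+1}{k-1}\Big)^{\!a},
\]
and here I would discard the $k$-dependence on the right via $\tfrac{k+1}{k-1}\leq 3$, and use the elementary bounds $2^a\leq 1+a$ and $(3/4)^a\leq 1$ on $[0,1]$ to get $\tau_1(a)(1-a)\,3^a=(2^a-1)(3/4)^a(1-a)\leq a(1-a)\leq\tfrac14$; on the left, the minimum over $k\geq 2$ and $a\in[\tfrac12,1)$ is attained at $k=2$, $a=\tfrac12$ and equals $1-3^{-1/2}>\tfrac14$, which closes the inequality. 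For $a=1$ and $k\geq 2$ I would instead use $\ln(k+1)-\ln k=\ln(1+\tfrac1k)\leq\tfrac1k$, which reduces the claim to $4\ln 2\,k^2\geq(k+1)^2$, i.e. $(1+\tfrac1k)^2\leq 4\ln 2$; since $(1+\tfrac1k)^2\leq\tfrac94<4\ln 2$ for $k\geq 2$, this holds.

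The argument is entirely elementary once the telescoping reduction is made, and I do not anticipate a genuine obstacle. The only point requiring care is the constant-chasing in the case $\tfrac12\leq a<1$: one must make sure that after throwing away the $k$-dependence the residual function of $a$ still satisfies the required inequality uniformly on $[\tfrac12,1)$, which it does with room to spare precisely because the factor $(1-a)$ absorbs the growth of $3^a$ near $a=1$.
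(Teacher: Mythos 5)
Your proof is correct. The reduction is the same as the paper's: the paper argues by induction on $n$, which amounts exactly to your telescoping reduction to the term-by-term inequality $\alpha_k(1-\alpha_k)\geq\tau_1(a)\big(k^{1-a}-(k-1)^{1-a}\big)$ (resp.\ $\tfrac{k}{(k+1)^2}\geq\tfrac{1}{4\ln 2}\ln\tfrac{k+1}{k}$), with equality at $k=1$. Where you differ is in how the per-term inequality is verified for $k\geq 2$. The paper treats it sharply: it rewrites the claim as a quotient being at least $\tau_1(a)$, notes equality at $n=1$, and proves the quotient is increasing in $n$ by showing $(x+1)^a\big(x^{1-a}-(x-1)^{1-a}\big)$ is decreasing, which requires differentiating, a substitution $z=(x-1)/x$, and an auxiliary inequality; similarly for $a=1$ it shows $\tfrac{(x+1)^2}{x}\ln\tfrac{x+1}{x}$ is decreasing. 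You instead exploit the fact that for $k\geq 2$ the inequality has genuine slack: the mean value bound $k^{1-a}-(k-1)^{1-a}\leq(1-a)(k-1)^{-a}$, the crude estimates $\tfrac{k+1}{k-1}\leq 3$, $(2^a-1)(3/4)^a(1-a)\leq a(1-a)\leq\tfrac14$, and $1-(k+1)^{-a}\geq 1-3^{-1/2}>\tfrac14$ close the case $\tfrac12\leq a<1$, while $\ln(1+\tfrac1k)\leq\tfrac1k$ and $(1+\tfrac1k)^2\leq\tfrac94<4\ln 2$ close $a=1$. I checked the constants ($2^a\leq 1+a$ by convexity, $1-3^{-1/2}\approx 0.42$, $4\ln 2\approx 2.77$) and they all hold. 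Your route buys brevity and avoids all calculus beyond the mean value theorem; the paper's monotonicity argument is heavier but identifies exactly where the inequality is tight, which is the kind of information that would be needed if the constant $\tau_1(a)$ had no room to spare.
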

\begin{proof}
For $n=1$ both inequalities are trivially satisfied with equality.
For the induction step, let us consider first the case $\frac{1}{2}\leq a<1$ and assume that the inequality holds for $n-1$. Then 
\begin{eqnarray*}
\tau_n(a)&=&(n+1)^{-a}\big(1-(n+1)^{-a}\big)+\tau_{n-1}(a)\\
&\geq& 
(n+1)^{-a}\big(1-(n+1)^{-a}\big)+\tau_1(a)\;(n-1)^a,
\end{eqnarray*}
hence it suffices to prove that 
$(n+1)^{-a}(1-(n+1)^{-a})\geq \tau_1(a)\,(n^a-(n-1)^a)
$,
or equivalently 
$$\frac{(1 - (n + 1)^{-a})}{(n + 1)^a (n^{1 - a} - (n - 1)^{1 - a})} \geq \tau_1(a).$$
Since the latter holds with equality for $n=1$, it suffices to show that the quotient
increases in $n$. 
The numerator is clearly increasing so it suffices to
show that the
denominator decreases. We claim that 
$(x + 1)^
   a (x^{1 - a} - (x - 1)^{1 - a})$ decreases for $x \geq 1$. Indeed,
differentiating with respect to $x$, this amounts to
$$a \left(1+\frac{x^a}{x^a-(x-1)^a}\right)\leq 1+x$$
and using the change of variable $z=(x-1)/x$ this is in turn equivalent to 
$$a\left(1+\frac{1}{1-z^a}\right)\leq 1+\frac{1}{1-z}\qquad\forall z\in[0,1).$$
The latter holds since $a\mapsto a\left(1+\frac{1}{1-z^a}\right)$ is increasing, which can be checked by differentiating with respect to $a$ and
using the inequality $(1-y)(2-y)+y \log (y)\geq 0$ for $y\in(0,1)$.

Similarly, for $a=1$, an inductive argument to prove 
$\tau_n(1)\geq\frac{1}{4\ln(2)}\ln(n+1)$ reduces the problem to showing that for all $n\geq 1$
$$\frac{1}{(n + 1)}\left (1 - \frac{1}{(n + 1)}\right) \geq \frac{1}{4\ln(2)} \left(\ln (n + 1) - \ln (n)\right).$$
This is in turn equivalent to 
$$ \frac{(n+1)^2}{n} \log \left(\frac{n+1}{n}\right)\leq 4\ln(2)$$
which follows directly since $x\mapsto \frac{(x+1)^2}{x} \log \left(\frac{x+1}{x}\right)$ is a decreasing function.
\end{proof}

\end{document}